\numberwithin{equation}{section}
\newtheorem{Theorem}{Theorem}[section]
\newtheorem{Thm}[Theorem]{Theorem}
\newtheorem{Corollary}[Theorem]{Corollary}
 \newtheorem{Lemma}[Theorem]{Lemma}
\newtheorem{Proposition}[Theorem]{Proposition}
\newtheorem{Prop}[Theorem]{Proposition}
\theoremstyle{remark}
\newtheorem{Remark}[Theorem]{Remark}
\theoremstyle{definition}
\newtheorem{Definition}[Theorem]{Definition}
\newtheorem{Def}[Theorem]{Definition}
\newtheorem{Example}[Theorem]{Example}
\newtheorem{Exa}[Theorem]{Example}
\newtheorem*{acknowledgements}{Acknowledgements}
\newcommand{\inner}[2]{\left\langle{#1},{#2}\right\rangle}
\newcommand{\imag}{\op{i}}
\newcommand{\R}{{\mathbf R}}
\newcommand{\Z}{{\mathbf Z}}
\newcommand{\C}{{\mathbf C}}
\newcommand{\mc}[1]{{\mathcal #1}}
\newcommand{\mb}[1]{{\mathbf #1}}
\newcommand{\pmt}[1]{{\begin{pmatrix} #1  \end{pmatrix}}}
\renewcommand{\phi}{\varphi}
\renewcommand{\epsilon}{\varepsilon}
\newcommand{\op}[1]{{\operatorname{ #1}}}
\newcommand{\dy}{\displaystyle}
\renewcommand{\phi}{\varphi}
\renewcommand{\epsilon}{\varepsilon}
\renewcommand{\mid}{\, ; \,}
\newcommand{\SL}{\op{SL}}
\renewcommand{\phi}{\varphi}
\newcommand{\pt}[1]{\mathrm{#1}}
\title{
Analytic extensions of constant mean curvature
one geometric catenoids in de Sitter $3$-space
}
\author{S.~Fujimori}
 \address[Shoichi Fujimori]{%
  Department of Mathematics, Hiroshima University,
  Higashihiroshima, Hiroshima 739-8526, Japan
}
\email{fujimori@hiroshima-u.ac.jp}
\author{Y. Kawakami}
 \address[Yu Kawakami]{%
   Faculty of Mathematics and Physics,
   Kanazawa University,
   Kanazawa, 920-1192, Japan,
}
\email{y-kwkami@se.kanazawa-u.ac.jp}
\author{M. Kokubu}
\address[Masatoshi Kokubu]{%
   Department of Mathematics, School of Engineering, 
   Tokyo Denki University, 
   Tokyo 120-8551, Japan}
\email{kokubu@cck.dendai.ac.jp}
\author{W.~Rossman}
\address[Wayne Rossman]{%
   Department of Mathematics, Faculty of Science,
   Kobe University,
   Rokko, Kobe 657-8501, Japan
}
\email{wayne@math.kobe-u.ac.jp}
\author{M.~Umehara}
\address[Masaaki Umehara]{%
   Department of Mathematical and Computing Sciences,
   Tokyo Institute of Technology
   2-12-1-W8-34, O-okayama, Meguro-ku,
   Tokyo 152-8552, Japan.
}
\email{umehara@is.titech.ac.jp}
\author{K.~Yamada}
\address[Kotaro Yamada]{%
   Department of Mathematics\\
   Tokyo Institute of Technology\\
   O-okayama, Meguro, Tokyo 152-8551, 
   Japan
}
\email{kotaro@math.titech.ac.jp}
\author{S.-D. Yang}
\address[Seong-Deog Yang]{%
   Department of Mathematics\\
   Korea University\\ 
   Seoul 136-701, Republic of Korea}
\email{sdyang@korea.ac.kr}
\date{July 06, 2022}
\keywords{analytic completeness, analytic extension, 
DC-manifold, double-cone manifold, G-catenoid, constant mean curvature surface}
\subjclass[2010]{Primary 53A10; Secondary 53A35.}
\thanks{The first author was supported in part by Grant-in-Aid for Scientific 
Research (C) No.17K05219 and (C) No. 21K03226
from Japan Society for the Promotion of Science.}
\thanks{The second author was supported in part by Grant-in-Aid for Scientific Research  (C) No.19K03463 from Japan Society for the Promotion of Science. }
\thanks{The third author was supported in part by Grant-in-Aid for Scientific Research (C) No. 17K05227 and (C) No.20K03617 from Japan Society for the Promotion of Science. }
\thanks{The fourth author was supported in part by Grant-in-Aid for Scientific Research (C) No.20K03585 and (S) No. 17H06127 from Japan Society for the Promotion of Science. }
\thanks{The fifth author 
was partially 
supported by Grant-in-Aid for 
Scientific Research (B) No.\ 21H00981, and the fifth and sixth authors were supported in part by Grant-in-Aid for Scientific Research (B) No.17H02839 from Japan Society for the Promotion of Science. }
\thanks{The seventh author was supported in part by NRF2017R1E1A1A03070929 and NRF 2020R1F1A1A01074585 from the National Research Foundation of Korea.}
\begin{document}

\maketitle
\begin{abstract}
We show that a certain simply-stated notion 
of \lq\lq analytic completeness'' of the image of
a real analytic map implies the map admits no analytic extension.
We also give a useful criterion for that notion of analytic completeness by 
defining arc-properness of continuous maps, which can be considered as a 
very weak version of properness.
As an application, we judge the 
analytic completeness of a certain class of constant mean curvature 
surfaces (the so-called ``G-catenoids'') or their analytic 
extensions in the de Sitter 3-space.
\end{abstract}

\section*{Introduction}
In the authors' previous  works
\cite{F}--\cite{FRUYY2}, \cite{UY}--\cite{Y}
on zero mean curvature surfaces 
in the Lorentz-Minkowski $3$-space $\R^3_1$
and constant mean curvature one surfaces
(i.e. CMC-$1$ surfaces) in de 
Sitter 3-space $S^3_1$, a number of 
concrete examples were constructed.
Such surfaces in $\R^3_1$ or
$S^3_1$ have singularities, in general.
Moreover, some of them
have non-trivial analytic extensions
(cf.\ Definition \ref{def:r-A6}).
To find such extensions,
one sometimes needs 
further techniques beyond
the usual reflection principle.

\begin{figure}[hbt]%
 \begin{center}
  \begin{tabular}{c@{\hspace{2em}}c@{\hspace{2em}}c@{\hspace{2em}}c}
       \includegraphics[width=5.0cm]{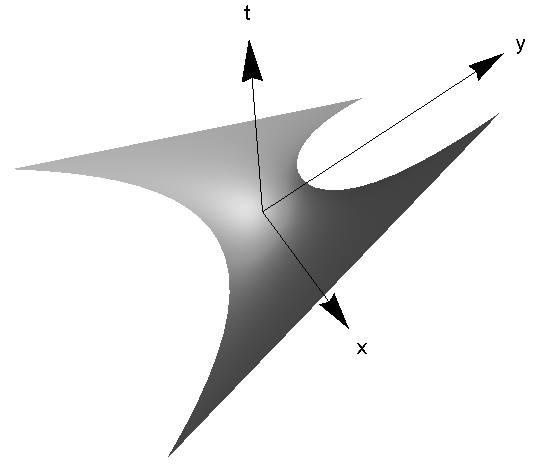} & 
       \includegraphics[width=5.0cm]{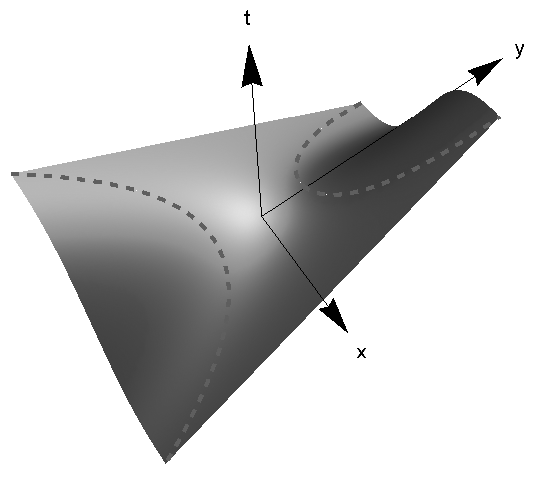} 
  \end{tabular}
 \end{center}
\caption{ The image of $f_K$ (left) and its analytic extension to
a zero mean curvature graph (right)
(the curves where the surface changes type are also indicated).
}%
\label{fig1c}
\end{figure}
Thinking of $\R^3_1$ with signature $(-++)$,
the analytic map
$f_K\colon{}\R\times T^1\to (\R^3_1;t,x,y)$
defined by
\begin{equation}\label{eq:A-cat-max}
f_K(u, v) = (\sinh u \cos v, u, \cosh u \cos v)
\qquad (u\in \R, \,\, v\in T^1),
\end{equation}
gives a typical such example 
(the subscript ``K'' stands for ``Kobayashi'' \cite{K}), 
where
\begin{equation}\label{eq:T1}
T^1:=\R/2\pi \Z
\end{equation}
is a $1$-dimensional torus.
This map $f_K$ gives a parametrization of a maximal surface
derived from the Weierstrass-type representation formula
and has two disjoint 
singular sets along the lines $v=0,\pi$.
The map is a proper  (i.e. the inverse image of a compact set is
compact) analytic map, but
is a two-to-one mapping away from its singular set 
(consisting of
fold singular points).
Osamu Kobayashi \cite{K} found that 
the image of $f_K$ is a proper subset of the 
entire (zero-mean curvature) 
graph of the function
$
t=y \tanh x
$.
The complement of the image of
$f_K$ within this graph contains two disjoint
time-like zero mean curvature surfaces,
that is, the graph of the function $t=y \tanh x$ can be considered as
an analytic extension of the image of $f_K$ 
(cf. Theorem \ref{Cor:EPHK}, 
the precise definition of analytic extension is given in 
Definition \ref{def:r-A6}.)

Up to now several space-like maximal surfaces
admitting analytic extensions have been found
by several geometers, 
including the authors, see
\cite{A, A3, CR, 
FKKRSUYY2, FKKRUY4, HK},
which have motivated the authors to 
find criteria that ensure the resulting 
surfaces satisfy \lq\lq analytic completeness\rq\rq,
namely, they have no further non-trivial 
analytic extensions (cf.  Proposition \ref{prop;Aext}). 

For example, space-like maximal surfaces in $\R^3_1$
which are invariant under a one-parameter family of 
isometries with a common fixed point (i.e. G-catenoids, see below)
are congruent to one of the 
following three surfaces up to a homothety:
\begin{itemize}
 \item the \emph{elliptic G-catenoid}
       \begin{equation}\label{eq:e-catenoid}
f_E(u,v)=(u, \sinh u, u \cos v,\sinh u \sin v) \qquad
        (u,v)\in \R\times T^1,
       \end{equation}
 \item the \emph{parabolic G-catenoid}
       \begin{equation}\label{eq:p-catenoid}
          f_P(u,v):=
          \left(v-\frac{v^3}3+u^2v, v+\frac{v^3}3-u^2v,2uv
                 \right)\qquad (u,v)\in \R^2,
       \end{equation}
\item the \emph{hyperbolic G-catenoid}
       \begin{equation}\label{eq:h-catenoid}
          f_H(u,v)=(\cosh u\sin v, v, \sinh u \sin v)\qquad
(u,v)\in \R^2.
       \end{equation}
\end{itemize}
The image of $f_E$ coincides with the subset
\begin{equation}\label{eq:E0}
\mc E=\{(t,x,y)\in \R^3_1\,;\, x^2+y^2=\sinh^2 t\},
\end{equation}
and is rotationally symmetric with
respect to the time-like axis, and has a cone-like singular point.
Moreover,
$f_E$ has no analytic extensions (in fact,
the map is proper and admits only one cone-like singular point, 
and we can apply Proposition~\ref{prop:first} 
and conclude that $f_E$ is analytically complete). 

As for $f_P$ in \eqref{eq:p-catenoid}, its image 
is not closed.
If we set
\begin{equation}\label{eq:p2}
 \mc P:=\Big\{(t,x,y)\in\R^3_1\,;\,
 12 (x^2 - t^2) = (x + t)^4 - 12 y^2\Big\},
\end{equation}
then it has a cone-like singular point
and satisfies
\begin{equation}\label{eq:P}
\mc P=f_P(\R^2)\cup L,\qquad L:=\Big\{(t,-t,0)\,;\,t\in \R\Big\}.
\end{equation}
The inclusion map associated with $\mc P$ can be considered as an analytic 
extension of the map $f_P$ (cf. Theorem \ref{Cor:EPHK}).
Similarly, $f_H$ also
has an analytic extension.
In fact, by a suitable symmetry $T$ in $\R^3_1$, 
we can write
\begin{equation}\label{eq:H0}
    \mc H:=\Big\{(t,x,y)\in \R^3_1\,;\, \sin^2 x+y^2-t^2=0\Big\}
=\overline{f_H(\R^2)\cup T\circ f_H(\R^2)},
\end{equation}
which is a singly-periodic surface
with a countably infinite number of
cone-like singular points, giving an analytic
extension of $f_H$ (see Examples \ref{ex:H0} and
Theorem \ref{Cor:EPHK}).  

The figures of $\mc P$ and $\mc H$ are found  
in \cite[Fig. 4 (left) and Fig. 1 (left)]{FKKRSUYY2}, 
and they can be expressed as the zero sets of  
the single analytic functions
\begin{align*}
&\tau_P(t,x,y):= 12 (x^2 - t^2) - (x + t)^4 + 12 y^2, \\
&\tau_H(t,x,y):=\sin^2 x+y^2-t^2
\end{align*}
respectively. By this fact, we can show that they are
analytically complete
(cf. Proposition~\ref{prop:first}).
However, in contrast to this characterization of
$\mc P$ and $\mc H$ as the zero sets of
analytic functions, analytically complete subsets
might not be expressed as the zero set of 
a certain analytic function, in general:
For example, the image of the map
\begin{equation}
f_0(t):=(t,e^{-1/t})\qquad (t>0)
\end{equation}
is an analytically complete subset of $\R^2$
(cf. Example \ref{ex:Kok})
although it cannot be characterized as
the zero set of any real analytic function.
In fact, 
$f_0((0,\infty))$ is not a closed subset of $\R^2$. 

In this paper, we define
analytic completeness
for subsets of real analytic manifolds, 
which is essentially the same concept as
the arc-symmetric property 
introduced by Kurdyka \cite{arc} (see also Nash \cite{N}
for arc-structures).
Moreover, inspired by \cite{arc},
we introduce 
\lq\lq arc-properness\rq\rq\ for 
continuous maps to an analytic manifold
(cf.~Definition~\ref{def:ap}).
One may regard arc-properness as a
very weak version of properness of maps (cf. Proposition \ref{prop:P}).  
In fact, proper maps are arc-proper, but the
converse is not true, in general.
This concept plays an important
role in describing our criterion
for analytic completeness (cf. Theorem~\ref{thm:first}).
In the remainder of this paper,
we shall demonstrate that this criterion 
is actually applicable to concrete examples. 
In fact, in the last two sections of this paper, 
it is applied 
to show the analytic completeness 
of analytic extensions of constant
mean curvature one catenoids in de Sitter $3$-space,
as follows:
Let $\R^4_1$ be the Lorentz-Minkowski $4$-space
with the metric $\inner{~}{~}$ of 
signature $(-++\,+)$.
Then  
\[
  S^3_1=
  \Big\{X=(x_0,x_1,x_2,x_3) \in\R^4_1\, ; 
\, (\inner{X}{X}:=)
-(x_0)^2+\sum_{j=1}^3 (x_j)^2=1\Big\},
\]
with the metric induced from $\R^4_1$, 
is a simply-connected Lorentzian 
$3$-manifold with constant sectional curvature $1$,
called the {\it de Sitter 3-space}.

Space-like constant mean curvature one surfaces in $S^3_1$ 
have a Weierstrass-type representation formula
like space-like maximal
surfaces in Lorentz-Minkowski space $\R^3_1$
(see \cite{AA}, \cite{F} and \cite{FRUYY}).
We consider constant mean curvature one surfaces
in $S^3_1$
and zero mean curvature surfaces in $\R^3_1$ with either of the
following properties:

\begin{itemize}
\item[(G)]
The surface is invariant under a one-parameter family of
isometries with a common fixed point.
\item[(W)]
The surface is weakly complete with Gauss map of degree $1$ and Hopf differential having a pole of order $2$ at each of two ends.
\end{itemize}
We call them geometric catenoids (G-catenoids in short) if they satisfy
(G), and catenoids with the same holomorphic data as the
Euclidean catenoid (W-catenoids in short) if they satisfy (W).
Here the subscript ``W'' stands for ``Weierstrass-type representation formulas''
for space-like constant mean curvature one surfaces
in $S^3_1$ and zero mean curvature surfaces in $\R^3_1$.

As mentioned above, G-catenoids in $\R^3_1$
are space-like maximal surfaces 
that are congruent to one of $f_E,\, f_P,\, f_H$ up to a homothety,
and the
last two maps have analytic extensions as mentioned above. 
W-catenoids in $\R^3_1$
are congruent to either $f_E$ or $f_K$ up to homotheties. 
(We have already seen that $f_K$ can be
extended as an entire graph.)

On the other hand, in \cite{Y} and \cite{FKKRUY2}, 
all G-catenoids and W-catenoids in $S^3_1$ 
are classified.
In light of this, it is interesting that
there are so many different types of
G-catenoids and W-catenoids in $S^3_1$.
G-catenoids in $S^3_1$ admit only cone-like singular points
like as in the case of catenoids in $\R^3_1$.
On the other hand, W-catenoids in $S^3_1$ admit 
various kinds of singular points.

In this paper, we focus on G-catenoids,
each of which is induced by
a family of isometries fixing a geodesic of $S^3_1$.
(Analytic completeness of W-catenoids will be
discussed in the forthcoming paper \cite{FKKRUYY7}.)
By the last author's classification \cite{Y},
geometric catenoids are divided into 
the following eight classes:
\begin{itemize}
\item G-catenoids of type TE, TP or TH
(which are special cases of W-catenoids with elliptic, parabolic or
hyperbolic monodromies, respectively), 
\item G-catenoids of type SE, SP or SH 
(exceptional W-catenoids of type II belong
to a special class of G-catenoids of type SE),
\item G-catenoids of type LE or LH,   
\end{itemize}
where $T,S$ and $L$ denote whether the geodesic
fixed by the 1-parameter group of isometries 
is time-like, space-like or light-like,
and $E,P$ and $H$ stand for elliptic, parabolic and hyperbolic, respectively.
We show that,
except for G-catenoids of type TE, TP and TH which are 
already analytically complete, 
the other five types of G-catenoids have
analytic extensions which are analytically complete.

To formulate the notion of analytic extension of analytic maps,
we define the concept 
\lq\lq double-cone manifold''. 
In fact, all analytic extensions of G-catenoids 
admit only cone-like singular points 
as their singular points, and can be parametrized
by analytic maps defined on double-cone manifolds.

In Section 1, we define analytic completeness. 
In Section 2, we define analytic maps on double-cone manifolds
and show their fundamental properties.
All analytic extensions of G-catenoids
can be interpreted as $\op{DC}$-immersions (i.e. double-cone immersions, 
that is immersions of $\op{DC}$-manifolds). 
In Section 3, we give a criterion for analytic completeness of
real analytic $\op{DC}$-immersions.
In Section 4, we prove that G-catenoids belonging to the above 
five types SE, SP, SH, LE or LH can be extended as 
real analytic $\op{DC}$-immersions from certain 
$\op{DC}$-manifolds of dimension~2.

The analytic completeness of maximal catenoids or their particular
analytic extensions in $\R^3_1$ is also discussed 
in Section 2 and  
Appendix A.

\section{Preliminaries}

Throughout this paper,
we fix a real analytic manifold $N^n$ of dimension $n(\ge 1)$.
We also fix a positive integer $m(\le n)$.
We begin by defining analytic completeness as follows:

\begin{Definition}
A non-empty subset $S$ of $N^n$
is said to be {\it analytically complete}
if it satisfies the following 
property: 
\begin{quote}
Any real analytic map $\Gamma : [0,1] \to N^n$ satisfying  
$\Gamma([0,\varepsilon)) \subset S$ for some $\varepsilon \in (0,1)$
satisfies $\Gamma([0,1]) \subset S$.
\end{quote}
\end{Definition}

Analytic completeness is the same concept as 
the \lq\lq arc-symmetric property\rq\rq\
given in \cite{arc}. 
For example:
 \begin{itemize}
 \item 
Any affine subspace $A^m$ in $\R^n$ is an analytically complete 
subset of $\R^n$ (cf. Example \ref{ex:Rn} 
and Proposition \ref{prop:first}). 
\item 
the interval $I:=[0,1]$ 
is not an analytically complete subset of $\R$.
In fact, let
$\Gamma:[0,1] \to \R$ be a
real analytic map defined by $G(t)=2t$
which maps $[0,1/2)$ to $I$ 
but $\Gamma(I)\not \subset I$.
\item Let $S = \{ (t, e^{-1/t}) \in \R^2 \mid 0 < t < \infty \}$. 
Then $S$ is an analytically complete subset of $\R^2$,
although $S$ has a $C^\infty$-extension
(cf. Example \ref{ex:Kok}).
 \end{itemize}

\begin{Definition}
A subset $S$ of $N^n$ is called {\it globally analytic} 
if there exists a positive integer $l$ and a real analytic map
$\tau:N^n \to \R^l$ such that the zero set
$$
Z(\tau):=\Big\{P\in N^n\,;\, \tau(P)=(0,\dots,0)\Big\}
$$
of $\tau$ coincides with $S$.  
\end{Definition}

\begin{Exa}\label{ex:Rn}
If $S$ is an $m$-dimensional linear subspace 
in $\R^n$, then $S$ is a globally analytic subset,
as there is a linear map $F:\R^n\to \R^{l}$ ($l\ge 1$)
such that $S$ is the kernel of $F$.
\end{Exa}

\begin{Exa}
For $m\ge 2$, the set 
\begin{align*}
\mc C^m_1:=\left\{(x_1,\dots,x_{m+1})\in \R^{m+1}\,;\, 
\sum_{j=1}^m x_j^2=x_{m+1}^2,\,\,\, x_{m+1}\ge 0\right\}
\end{align*}
is called the {\it $m$-dimensional standard cone},
which is a subset of
\begin{equation}\label{eq:cone}
\mc C_2^m:=\left\{(x_1,\dots,x_{m+1})\in \R^{m+1}\,;\, 
\sum_{j=1}^m x_j^2=x_{m+1}^2\right\}.
\end{equation}
This $\mc C_2^m$ is called
the {\it $m$-dimensional standard double-cone}
and is a globally analytic subset of $\R^{m+1}$.
Moreover, $\mc C^m_2$ is the image of the analytic map
\begin{equation}\label{eq:fdc}
\pi_{\op{DC}}^{m}:S^{m-1}\times \R \ni (\mb x,t)\mapsto (t \mb x,t)\in \R^{m+1},
\end{equation}
where $S^{m-1}$ is the unit sphere centered at the origin in $\R^{m}$.
We call the origin $\mb 0\in \mc C_2^m$ the 
{\it standard double-cone point}.
\end{Exa}

As pointed out in the introduction,
the subsets $\mc E,\,\,\mc P$  and $\mc H$ of $\R^3_1$
given by \eqref{eq:E0}, 
\eqref{eq:p2} and \eqref{eq:H} are globally analytic.
The following assertion holds:

\begin{Proposition}\label{prop:first}
Let $S$ be a subset of $N^n$.
If $S$ is a globally analytic subset of $N^n$, then it
 is analytically complete.
\end{Proposition}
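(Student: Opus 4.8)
The plan is to show that any globally analytic subset satisfies the defining condition of analytic completeness, by reducing a statement about a subset $S$ to a statement about the zero set of the composition of the analytic map $F$ with the analytic arc $\Gamma$. First I would unwind the hypotheses: since $S = Z(F)$ for some real analytic $F : N^n \to \R^l$, and we are given a real analytic $\Gamma : [0,1] \to N^n$ with $\Gamma([0,\varepsilon)) \subset S$ for some $\varepsilon \in (0,1)$, the goal $\Gamma([0,1]) \subset S$ is equivalent to $F(\Gamma(t)) = 0$ for all $t \in [0,1]$.

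The key step is to consider the composite map $g := F \circ \Gamma : [0,1] \to \R^l$, which is real analytic because the composition of real analytic maps is real analytic. Writing $g = (g_1, \dots, g_l)$ in components, each $g_k : [0,1] \to \R$ is a real analytic function of one variable. The hypothesis $\Gamma([0,\varepsilon)) \subset S = Z(F)$ says precisely that $g_k(t) = 0$ for every $t \in [0,\varepsilon)$ and every $k$. So each $g_k$ is a real analytic function on an interval that vanishes on the nonempty open subinterval $[0,\varepsilon)$.

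The main tool — and the step I would flag as the conceptual heart, though it is standard — is the identity theorem for real analytic functions of one variable: a real analytic function on a connected interval that vanishes on a set with an accumulation point in the interior must vanish identically. Since $[0,\varepsilon)$ contains accumulation points (indeed a whole open interval) lying in the interior of $[0,1]$, each $g_k$ vanishes identically on all of $[0,1]$. Hence $g = F \circ \Gamma \equiv 0$, which is to say $\Gamma(t) \in Z(F) = S$ for all $t \in [0,1]$, i.e.\ $\Gamma([0,1]) \subset S$. As $\Gamma$ was an arbitrary analytic arc satisfying the hypothesis, $S$ is analytically complete.

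I expect the only genuine subtlety to be bookkeeping about domains and real analyticity at the endpoints: one must be slightly careful that $\Gamma$ is analytic on the closed interval $[0,1]$ (meaning it extends analytically to an open neighborhood of $[0,1]$ in $\R$) so that the identity theorem applies across the whole domain rather than just on an open subinterval, and that composition preserves this one-sided/endpoint analyticity. These are routine once the definitions are spelled out, so the proof is essentially a one-line application of the identity theorem to each component of $F \circ \Gamma$.
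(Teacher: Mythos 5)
Your proof is correct and follows exactly the same route as the paper: the paper's own proof simply states that the real analyticity of $F$ and $\Gamma$ forces $\Gamma([0,1])\subset Z(F)$, and your argument just makes explicit the underlying application of the one-variable identity theorem to each component of $F\circ\Gamma$. No gaps.
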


\begin{proof}
Suppose that $S$ is globally analytic.
Then there exist a positive integer 
$l$ and a real analytic map $\tau:N^n\to \R^l$
such that $S=Z(\tau)$. Let $\Gamma:[0,1]\to N^n$ be a real analytic map
such that $\Gamma([0,\epsilon))\subset S(\subset Z(\tau))$ 
for some sufficiently small $\epsilon>0$.
Then the real analyticity of $\Gamma$ and $\tau$
implies $\Gamma([0,1])\subset S$.
So $S$ is analytically complete.
\end{proof}

\section{Double-cone manifolds ($\op{DC}$-manifolds)}

In this section, we consider real analytic maps
as well as smooth maps. So, by $C^r$-map, 
we mean a real analytic (resp. smooth) map if 
$r=\omega$ (resp. $r=\infty$).
In the previous section, we defined
the standard double-cone $\mc C^m_2$. 
Using this, we define double-cone 
manifolds (i.e. $\op{DC}$-manifolds):
Related to this are the concepts of \lq\lq orbifold" and 
\lq\lq conifold", but the first one requires a group action
and the second one is considered on complex manifolds
or on real manifolds 
modeled on the standard cone $\mc C^m_1$.
As far as the authors know, there is no similar definition  
of manifolds modeled on the standard double-cone $\mc C^m_2$.
We first define topological double-cone 
manifolds (i.e. $\op{DC}$-manifold) as follows:

\begin{Definition}\label{def:DC0}
Fix an integer $m(\ge 2)$.
A Hausdorff space $X$ with the second axiom of
countability is called a 
{\it topological $\op{DC}$-manifold of dimension $m$}
if each point of $X$ has a neighborhood
which is homeomorphic to an open subset of $\mc C^m_2$.
Moreover, $p\in X$ is called a {\it $\op{DC}$-point} if
$p$ corresponds to the origin of $\mc C^m_2$.
\end{Definition}

We let $\Sigma$ be the set of all $\op{DC}$-points of
a topological $\op{DC}$-manifold $X$ of dimension $m$.
Then, by definition, $X\setminus \Sigma$ 
is a topological manifold of dimension $m$ and
$\Sigma$ is a discrete subset of $X$.

\begin{Definition}\label{def:DC1}
Let $X$ be a topological $\op{DC}$-manifold of dimension $m$, and  
let $\varphi$ be a homeomorphism 
of an open subset $\hat U$ of $\mathcal C^m_2$
onto a connected open set $O(\subset X)$.
Since $\pi^m_{\op{DC}}$ (cf. \eqref{eq:fdc}) is a
continuous map,
\begin{equation*}
 U := (\pi^m_{\op{DC}})^{-1}(\hat U)
\end{equation*}
is an open subset of $S^{m-1}\times \R$. 
We consider
a continuous map
$\Phi:U\to O$ given by
$
 \Phi := \varphi \circ \pi^m_{\op{DC}},
$
and call the pair $(U, \Phi) $ an 
\emph{inverse $\op{DC}$-coordinate system} (or a \emph{parametrization system}).
We then call the pair $(O, \varphi^{-1}) $ a \textit{coordinate system}
associated with $(U,\Phi)$.
\end{Definition}

\begin{Definition}\label{def:DC2}
A \emph{differentiable $\op{DC}$-structure $\mathcal F$ of class $C^r$} 
on an $m$-dimensional topological $\op{DC}$-manifold $X$
is a collection of inverse coordinate systems
$$
\{(U_\lambda, \Phi_\lambda)  ; \lambda \in \Lambda \}
$$
satisfying the following properties: 
\begin{enumerate}
\item[(i)] By setting $O_\lambda:=\Phi_\lambda(U_\lambda)$,
the family 
$\{O_\lambda \}_{\lambda \in \Lambda}$ is an open covering of $X$. 
\item[(ii)] Whenever $O_\lambda \cap O_\mu \ne \varnothing$, 
there exists a $C^r$-diffeomorphism 
\begin{equation*}
 \Phi_{\mu \lambda} \colon \Phi_\lambda^{-1}(O_\mu) \cap U_\lambda 
\to \Phi_\mu^{-1}(O_\lambda) \cap U_\mu 
\end{equation*}
satisfying $\Phi_\mu \circ \Phi_{\mu \lambda} = \Phi_\lambda$
(see Figure \ref{fig:CD1}). 
 \item[(iii)] The collection $\mathcal F$ is maximal with respect to (ii).
\end{enumerate}
Moreover, 
an $m$-dimensional topological $\op{DC}$-manifold $X$
with a differentiable $\op{DC}$-structure of class $C^r$
is called a ($C^r$-differentiable) {\it $m$-dimensional
$\op{DC}$-manifold}. 
In this setting, each 
$(U_\lambda, \Phi_\lambda)$ $(\lambda \in \Lambda)$
is an inverse $\op{DC}$-coordinate system of $X$,
and $(O_\lambda, \phi_\lambda^{-1})$ is  
a local coordinate system   of $X$ associated with
$(U_\lambda, \Phi_\lambda)$.
\end{Definition}

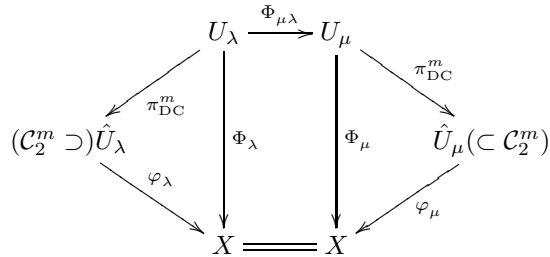
\begin{figure}[htb]%
 \begin{center}
\[
\xymatrix{
& U_\lambda \ar[dd]^{\Phi_{\lambda}^{}}
\ar[r]^{\Phi_{\mu\lambda}^{}} \ar[ld]^{\pi^m_{\op{DC}}} 
& U_\mu \ar[dd]^{\Phi_{\mu}^{}} \ar[rd]^{\pi^m_{\op{DC}}} &\\ 
(\mc C^m_2\supset) \hat U_\lambda
\ar[rd]^{\phi_{\lambda}^{}} 
	& & 
& \hat U_\mu (\subset \mc C^m_2) \ar[ld]^{\phi_{\mu}^{}} 
\\
& X \ar@{=}[r]& X&} 
\]
\end{center}
  \caption{The commutative diagram 
related to the maps $\Phi_\lambda$ and $\Phi_\mu$.
}%
\label{fig:CD1}
\end{figure}

Usual manifolds can be considered as 
$\op{DC}$-manifolds without $\op{DC}$-points:
We let $M^m$ be an $m$-dimensional manifold, and $(O,\psi)$ 
a local coordinate neighborhood of a point $q\in M^m$.
Since $S^{m-1}\times (0,\infty)$
is diffeomorphic to $\R^m\setminus \{\mb 0\}$, 
we may assume that 
$\psi(O)$
is a subset of 
$S^{m-1}\times (0,\infty)$.
Then
\begin{equation}\label{eq:checkU}
(\psi(O), \Phi)\qquad (\Phi:=\psi^{-1})
\end{equation}
gives a local inverse $\op{DC}$-coordinate system 
of $M^m$ as a $\op{DC}$-manifold.

The standard double-cone $\mc C_2^m$ 
is a typical example of a $C^r$-differentiable $\op{DC}$-manifold
with a single local inverse $\op{DC}$-coordinate system
$
\Phi:=\pi_{\op{DC}}^{m}:S^{m-1}\times \R  \to \mc C_2^m.
$ 

\begin{Definition}\label{def:mDC-map}
Let $X^m$ and $Y^n$ be two $C^r$-differentiable $\op{DC}$-manifolds whose 
$\op{DC}$-structures are
$\{(U_\alpha,\Phi_\alpha)\}_{\alpha\in A}$
and
$\{(V_\beta,\Psi_\beta)\}_{\beta\in B}$, respectively.
A map $f:X^m \to Y^n$ is called a 
$C^r$-differentiable
{\it $\op{DC}$-map}
if for each  pair of
indices $(\alpha,\beta)\in A\times B$, there exists
a $C^r$-map
$$
F_{\alpha,\beta}:
U_\alpha\cap \Phi_\alpha^{-1}\Big(f^{-1}\big(\Psi_\beta(V_\beta)\big)\Big)
\to V_\beta
$$
satisfying
$
\Psi_\beta\circ F_{\alpha,\beta}=f\circ \Phi_\alpha.
$
Moreover, if $f$ is bijective and $f^{-1}$ is
also a  $C^r$-differentiable $\op{DC}$-map,
then $f$ is called a
($C^r$-differentiable) {\it $\op{DC}$-diffeomorphism}.
\end{Definition}

\begin{Remark}
Let $(O,\psi)$ be a local coordinate system of
a $\op{DC}$-manifold $X^m$, then $\psi:O\to \psi(O)\,(\subset \mc C^m_2)$
gives a typical example of a $\op{DC}$-diffeomorphism.
\end{Remark}

We next define \lq\lq DC-immersions'' as follows:

\begin{Definition}\label{def:DCI}
Let $X^m$ be a $C^r$-differentiable $\op{DC}$-manifold,
and let $N^n$ be a $C^r$-manifold. 
A map $f:X^m\to N^n$ ($n> m$) is 
called a {\it $\op{DC}$-immersion} if,
for each $p\in X$, there exist
\begin{itemize}
\item  a local DC-coordinate system $(O,\psi)$ 
containing $p$ and 
\item a $C^r$-diffeomorphism $F$ 
defined on a neighborhood $\Omega_0$ of the origin $\mb 0$
of $\R^{n}$ 
onto a neighborhood $\Omega$ of $f(p)$ 
such that 
$$
F(a,\mb 0_{n-m-1}) =f\circ \psi^{-1}(a)
\qquad (a\in \psi(O)\subset \mc C^m_2),
$$
where $\mb 0_{n-m-1}$ is the zero vector of $\R^{n-m-1}$.
\end{itemize}
\end{Definition}	

By definition, we have the following:

\begin{Proposition}\label{prop:DC}
A $\op{DC}$-immersion is a $\op{DC}$-map.
\end{Proposition}

The following assertion is also obvious:

\begin{Proposition}
Let $M^m$ be a $C^r$-manifold,
and let $f:M^m\to N^n$ be a $C^r$-immersion
into a $C^r$-manifold $N^n$ of dimension $n(>m)$.
Then $f$ is a $\op{DC}$-immersion. 
\end{Proposition}

As a consequence, we have the following:

\begin{Corollary}\label{cor:Xm}
Let $X^m$ be a  $C^r$-differentiable $\op{DC}$-manifold
and $f:X^m\to N^n$ $(m<n)$ a 
$\op{DC}$-immersion. 
If $g:N^n\to L^l$ $(l\ge n)$
is a $C^r$-immersion into a $C^r$-manifold of dimension $l$,
then  $g\circ f$ is also a $\op{DC}$-immersion.
\end{Corollary}

\begin{Definition}
A subset $X$ of a $C^r$-manifold $N^{n}$ 
is said to be {\it admissible} if there exists a
discrete subset $\Gamma$ of $X$ such that $X \setminus \Gamma$
is an $m$-dimensional $C^r$-submanifold of $N^{n}$,
where $m<n$.
We call such $\Gamma$ a {\it pre-residual subset} of $X$.
We consider the intersection of all pre-residual subsets
of $X$, which gives the 
smallest pre-residual subset of $X$.
We denote it by  $\Sigma$, and call it  the {\it residual set} of $X$.
\end{Definition}

By definition, $X\setminus \Sigma$ is an
$m$-dimensional $C^r$-submanifold of $N^{n}$.

\begin{Definition}\label{def:X}
Let $X$ be an admissible subset of
a $C^r$-manifold $N^{n}$ ($n> m$), and
let $\Omega(\subset \R^{n})$ 
be an open subset so that
$$
\hat U:=(\mc C^m_2\times \{\mb 0_{n-m-1}\})\cap \Omega
$$
is non-empty.
A $C^r$-diffeomorphism $F$ from $\Omega$
into $N^{n}$ is called a $C^r$-differentiable
{\it extended parametrization} of $X$
if 
$\hat U$
satisfies the following properties;
\begin{itemize}
\item[(1)] $F(\hat U)$ is contained in $X$,
\item[(2)] if $\hat U$ contains the origin $\mb 0$,
then $F(\mb 0)$ belongs to the residual set $\Sigma$ of $X$, and
\item[(3)] $F$ gives a $C^r$-map from 
the $C^r$-submanifold $\hat U\setminus \{\mb 0\}$
of $\R^{m+1}\times\{\mb 0_{n-m-1}\}$ into 
the $C^r$-submanifold $X\setminus \Sigma$.
\end{itemize}
\end{Definition}

Regarding $\mc C^m_2$ as
an admissible subset of $\R^{m+1}$,
we prepare the following:

\begin{Lemma}\label{lem:new-key}
Let $\Omega_i$ $(i=1,2)$
be open subsets containing the origin of $\mc C^m_2$.
Suppose that $F:\Omega_1\to \R^{m+1}$ is a $C^r$-differentiable
extended parametrization of
$\mc C^m_2$
such that  $F(\Omega_1)=\Omega_2$.
Then the restriction $f:=F|_{\Omega_1\cap \mc C^m_2}$
is a $C^r$-differentiable $\op{DC}$-diffeomorphism
between $\Omega_1\cap \mc C^m_2$ and $\Omega_2\cap \mc C^m_2$.
\end{Lemma}

\begin{proof}
Since we can switch the roles of $F$ and $F^{-1}$, 
it is sufficient to show that $f$ is 
a $\op{DC}$-map at each point $p\in \mc C^m_2\cap \Omega_1$.
If $p$ is not the origin $\mb 0$ in $\R^m$,
the assertion is obvious.
So we may assume that  $p=\mb 0$.
By (2) of 
Definition \ref{def:X},
we have $f(p)=\mb 0$.
We set
$
U_i:=(\pi^m_{\op{DC}})^{-1}(\Omega_i)\,\, (i=1,2).
$
Since $\mb 0$ belongs to $\Omega_1$ and $\Omega_2$,
the subset $S^{m-1}\times \{0\}$ is contained in $U_1$
and $U_2$. We set
$
G:=F\circ \pi_{\op{DC}}^{m},
$
which is a $C^r$-map from $U_1$ to $\R^{m+1}$.
Using the central projection
$
\pi_C:\mc C^m_2\setminus \{\mb 0\}
\ni(\mb{x},t)\mapsto  {\mb{x}}/{t}\in S^{m-1},
$
we set
$$
g:U_1\setminus (S^{m-1}\times \{0\})
\ni (\mb{x},t) \mapsto \pi_{C}\circ G(\mb{x},t)\in S^{m-1}.
$$
If we set
$$
\big(\lambda(\mb{x},t),\tau(\mb{x},t)\big):=
G(\mb{x},t)\in \mc C^m_2(\subset \R^{m+1}),
$$
then
$$
\alpha(\mb{x},t):=\frac{\lambda(\mb{x},t)}{t}, \quad
\beta(\mb{x},t)=\frac{\tau(\mb{x},t)}t
$$
are both $C^r$-differentiable maps
at $t=0$  (cf. \cite[Appendix A]{SUY2}).
Since
$t\mapsto G(\mb{x},t)$ is a regular curve,
neither
$$
\alpha(\mb{x},0)=\frac{\partial \lambda(\mb{x},0)}{\partial t}
\,\,\,\text{nor} \,\,\,
\beta(\mb{x},0)=\frac{\partial \tau(\mb{x},0)}{\partial t}
$$
vanishes at $t=0$.
Moreover, since
$\lambda(\mb{x},t)/\tau(\mb{x},t)$ is a unit vector for $t \ne 0$,
the order of the map $t\mapsto \lambda(\mb{x},t)$ at $t=0$
coincides with that of
$\tau(\mb{x},t)$.
In particular, $\partial \alpha(\mb{x},0)/\partial t$ and
$\partial \beta(\mb{x},0)/\partial t$
are both non-zero.
Thus, the map 
$$
\Phi:U_1\ni (\mb x,t)\mapsto \left(\frac{\lambda(\mb x,t)}{\tau(\mb x,t)},\tau(\mb x,t)\right)\in U_2
$$
is a $C^r$-map, and satisfies $\pi^m_{\op{DC}}\circ \Phi=G$,
which implies that
$f$ is $C^r$-differentiable on
$\pi^m_{\op{DC}}(U_1)$, proving the assertion.
\end{proof}

\begin{Definition}\label{eq:def-N}
Let $N^{n}$ be an $n$-dimensional $C^r$-manifold
and $X$ an admissible subset of $N^{n}$.
Then $X$ is said to be a {\it $\op{DC}$-submanifold} of $N^{n}$
if $X$ has an $m$-dimensional $\op{DC}$-structure ($m<n$)
such that, for any
extended parametrization 
$F:\Omega(\subset \R^n)\to N^{n}$
of $X$ (cf. Definition 
\ref{def:X}), the pair 
$$
\Big((\hat \pi_{\op{DC}}^m)^{-1}(\Omega),
F\circ \hat \pi^m_{\op{DC}}\Big)
$$
gives a local inverse $\op{DC}$-coordinate system of $X$,
where 
$$
\hat \pi_{\op{DC}}^m:S^{m-1}\times \R
\ni a\mapsto (\pi_{\op{DC}}^m(a),\mb 0)\in \mc C^m_2\times \R^{n-m-1} \qquad
 (a\in S^{m-1}\times \R).
$$
\end{Definition}

By definition, the following assertion is obvious:

\begin{Proposition}
The inclusion map of a DC-submanifold
is a  DC-immersion.
\end{Proposition}

We next prepare the following:

\begin{Lemma}\label{lem:ADC}
Let $N^{n}$ be an $n$-dimensional $C^r$-manifold
and $X$ an admissible subset of $N^{n}$.
Suppose that, for each $p\in X$, there exist 
\begin{enumerate}
\item an open subset $\Omega_p$ of $\R^{n}$, and
\item a $C^r$-differentiable extended parametrization 
$F_p:\Omega_p\to N^{n}$ of $X$
such that $p\in F_p(\Omega_p)$.
\end{enumerate}
Then, $X$ has  the structure of a
$C^r$-differentiable $\op{DC}$-submanifold of $N^{n}$
such that the residual set is the set of DC-points
and the inclusion map $X \hookrightarrow N^{n}$
is a $\op{DC}$-immersion.
\end{Lemma}

\begin{proof}
We take $p,q\in X$ such that $O_{p,q}:=F_p(\Omega_p)\cap F_q(\Omega_q)$
is non-empty. Then $G:=F_q^{-1}\circ F_p$ is an
extended parametrization of 
$\hat {\mc C}^m_2$ (cf. Definition \ref{def:X})
which maps  
$F_p^{-1}(O_{p,q})$ onto $F_q^{-1}(O_{p,q})$.
By Lemma \ref{lem:new-key},
the restriction of $G$
is a $C^r$-differentiable $\op{DC}$-diffeomorphism
between $\Omega_p\cap G^{-1}(\Omega_q)\cap \hat{\mc C}^m_2$ and 
$\Omega_q\cap G(\Omega_p)\cap \hat{\mc C}^m_2$.
Thus, by setting $U_p:=(\hat \pi_{\op{DC}}^m)^{-1}(\Omega_p)\cap 
\hat{\mc C}^m_2$, 
the family
$
\left\{(U_p, F_p\circ 
\hat \pi_{\op{DC}}^m|_{U_p})\right\}_{p\in X}
$
gives the structure of a $C^r$-differentiable
$m$-dimensional $\op{DC}$-manifold on $X$,
which can be considered as a $\op{DC}$-submanifold of $N^{n}$.
It is obvious that 
the inclusion map $X \hookrightarrow N^{n}$
is a $C^r$-differentiable $\op{DC}$-immersion.
\end{proof}

We then prove the following:

\begin{Proposition}\label{prop:Xstr}
Let $X$ be an admissible subset 
$($with the residual set $\Sigma)$
of a $C^r$-manifold $N^{n}$ of dimension $n$.
Suppose that for each $p\in \Sigma$, there exist 
\begin{enumerate}
\item an open subset $\Omega_p$ of $\R^{n}$, and
\item a $C^r$-differentiable 
extended parametrization $F:\Omega_p\to N^{n}$ of $X$
such that $p\in F_p(\Omega_p)$.
\end{enumerate}
Then $X$ has the structure of a $C^r$-differentiable  
$\op{DC}$-submanifold 
of $N^{n}$ such that the residual set is the set of $\op{DC}$-points
and the inclusion map $X \hookrightarrow N^{n}$
is a $\op{DC}$-immersion.
\end{Proposition}

\begin{proof}
We let $q\in X\setminus \Sigma$. 
Since $X\setminus \Sigma$ is a $C^r$-submanifold of
$N^{n}$, we can take a sufficiently small local coordinate 
neighborhood $(W,\psi) (\subset X\setminus \Sigma)$ 
of $q$ satisfying $W\subset X\setminus \Sigma$.
Since $\psi(W)$ is an open subset of $\R^m$,
this can be considered as an open subset of
$\hat{\mc C}^m_2$.
Moreover, without loss of generality, we may assume that
there exist
\begin{itemize}
\item an open subset $\Omega_0(\subset \R^{n})$
satisfying $\Omega_0\cap \hat{\mc C}^m_2=\psi(W)$,
\item an open subset $\Omega$ of $N^{m+1}$
such that $\Omega\cap X=W$, and
\item an extended parametrization $F:\Omega_0\to \Omega$ of $X$
\end{itemize}
such that $q\in F(\Omega_0)$.
So the assertion follows from Lemma~\ref{lem:ADC}.
\end{proof}

Here, we show that
the image $\mc E$ and the extended images 
$\mc P$ and $\mc H$ of geometric catenoids in $\R^3_1$
given in the introduction can be considered as 
$\op{DC}$-submanifolds in $\R^3_1$
as follows:

\begin{Exa}\label{ex:E0}
The subset $\mc E$ of $\R^3_1$ given in \eqref{eq:E0}
coincides with
the image of  the elliptic $G$-catenoid $f_E$,
which
can be characterized as 
the zero set of the function
$$
\tau_E:=x^2+y^2-\sinh^2 t.
$$
The exterior derivative of $\tau_E$  
vanishes only at the origin $\mb 0$ of $\R^3_{1}$. 
So $\mc E \setminus \{\mb 0\}$ has the structure of 
a (usual) real analytic $2$-manifold and $\mc E$
is an admissible subset of $\R^3$.
Moreover, the map defined by
$$
F_E:\R^3\ni (x_1,x_2,x_3)\to (\sinh^{-1} x_3, x_1,x_2)\in \R^3_1
$$
is an extended parametrization of 
$\mc E$ (cf.~Definition~\ref{def:X}),
where $\sinh^{-1}$ is the inverse function of $\sinh$.
By Proposition \ref{prop:Xstr},
$\mc E$ is a real analytic 
$\op{DC}$-submanifold of $\R^3$.
\end{Exa}

\begin{Exa}\label{ex:P0}
We consider the subset $\mc P$ given by \eqref{eq:p2}
of $\R^3_1$, which is the zero-level set of the real analytic function 
$
\tau_P= 12 (x^2 - t^2) - (x + t)^4 + 12 y^2.
$
The exterior derivative of $\tau_P$
vanishes only at the origin $\mb 0$ of $\R^3_{1}$. 
So $\mc P \setminus \{\mb 0\}$ has the structure of 
a (usual) real analytic $2$-manifold and $\mc P$
is an admissible subset of $\R^3$.
If we set
$$
x_1+x_3:=x+t,\qquad x_1-x_3:=12(x-t)-(x+t)^3,\quad x_2:=2\sqrt{3}y,
$$
then
$\tau_P=0$ is equivalent to the relation 
$(x_1)^2+(x_2)^2=(x_3)^2$.
Then the map 
$F_P:\R^3 \to
\R^3_1$
defined by
$$
F_P(x_1,x_2,x_3): 
=\frac1{24}
\Big(11 x_1+13 x_3-(x_1+x_3)^3,13 x_1+11 x_3+(x_1+x_3)^3,4\sqrt{3}x_2\Big)
$$
is an extended parametrization of
$\mc P$ (cf. Definition \ref{def:X}), and
by Proposition \ref{prop:Xstr},
$\mc P$ is a real analytic 
$\op{DC}$-submanifold of $\R^3_1$.
\end{Exa}

\begin{Exa}\label{ex:H0}
We consider the subset $\mathcal H$ given by \eqref{eq:H0},
which is the zero-level 
set of the real analytic function $\tau_H= \sin^2 x + y^2 -t^2$.     
It can be easily checked that $\mc H$
is an admissible subset of $\R^3_1$
whose residual set is
$\Sigma=\{(0,k\pi,0)\,;\, k\in \Z\}$.
We set
$$
\Omega_0:=\{(x_1,x_2,x_3)\in \R^3\,;\, |x_1|< 1\}
$$
and, for every $k \in \mathbf Z$,
consider the map 
\begin{equation*}
 F_k \colon \Omega_0 \ni (x_1,x_2,x_3) \mapsto 
(x_3,k\pi+\arcsin x_1, x_2) \in \R^3_1,
\end{equation*}
which is an extended
parametrization of $\mc H$ (cf. Definition \ref{def:X})
such that $(0,k\pi,0)\in F_k(\Omega_0)$.
By Proposition \ref{prop:Xstr},
$\mc H$ is a real analytic
$\op{DC}$-submanifold of $\R^3$.
\end{Exa}

\begin{Definition} \label{def:Mdim}
Let $X^m$ be a connected $m$-dimensional $\op{DC}$-manifold, and
$N^n$ be an $n$-dimensional real analytic manifold.
A $\op{DC}$-map
$f:X^m\to N^n$ is said to be
{\it $m$-dimensional} if
there exists an open dense subset $O$
of $X^m$ consisting of non-$\op{DC}$-points of $X^m$
such that $f$ is an immersion on $O$.
\end{Definition}

By definition, DC-immersion from 
a connected $m$-dimensional $\op{DC}$-manifold
is a special case of 
$m$-dimensional DC-map.
From here we restrict our attention  to the real analytic case.
The following assertion holds:

\begin{Proposition}\label{prop:Long}

Let $X^m$ be a connected $\op{DC}$-manifold, and
$f:X^m\to N^n$ $(m\le n)$
be a real analytic $\op{DC}$-map
$($Definition \ref{def:mDC-map}$)$.
If there exists a non-$\op{DC}$-point $p\in X^m$ at which
$(df)_p$ is injective,
then $f$ is an $m$-dimensional DC-map.
\end{Proposition}

\begin{proof}
We denote by $\Sigma$ the set of $\op{DC}$-points of $X^{m}$.
Then $X^m\setminus \Sigma$ is a union of
connected real analytic manifolds $\{C_\lambda\}_{\lambda\in \Lambda}$.
We can take
$\mu_0\in \Lambda$ so that 
$p\in {C_{\mu_0}}$.
Since $(df)_p$ is injective, 
$f|_{C_{\mu_0}}$ gives an immersion on a dense subset of $C_{\mu_0}$.
It is sufficient to show that
each $
f|_{C_\lambda}$ ($\lambda\in \Lambda$) 
gives an immersion on a dense subset of $C_{\lambda}$.
We suppose that it fails.
Then there exists $\lambda_0$ such that
the rank of $(df)_p$ is less than $m$ for each $p\in C_{\lambda_0}$.
Since $X^m$ is connected, there exists a continuous map $\gamma:[0,1]\to
X^m$ such that $\gamma(0)=p$ and 
$\gamma(1)\in C_{\lambda_0}$. 
Since $\Sigma$ is discrete,
$\gamma$ passes through only finitely many connected components
of $X\setminus \Sigma$. 
So we may assume that $C_{\mu_0}$ is 
adjacent to $C_{\lambda_0}$, that is, there exists
a point $q\in \Sigma$ which connects to
$C_{\lambda_0}$ with $C_{\mu_0}$,
without loss of generality.
We let $(U,\Phi)$ be a local inverse $\op{DC}$-coordinate system
such that $q\in \Phi(U)$ and $U$ is connected.
Then, there exists $x_0\in U$ such that $\Phi(x_0)=q$.
We can take open subsets $V,W(\subset U)$
such that $\Phi(V)\subset C_{\lambda_0}$ and
$\Phi(W)\subset C_{\mu_0}$.
By our choice of $\lambda_0$,
the rank of the differential of $f\circ \Phi$ is less than $m$ on  $V$.
Then, by the real analyticity of $f$,
the rank of the differential of $f\circ \Phi$
must be less than $m$ on $U$ because of the connectedness of $U$.
Since $\Phi(W)\subset C_{\mu_0}$,
this contradicts the fact that
$f|_{C_{\mu_0}}$ gives an immersion on a
dense subset of $C_{\mu_0}$.
\end{proof}

\begin{Definition}\label{def:r-A6}
Let $X^m_i$ be connected $\op{DC}$-manifolds of dimension $m$, 
and
$f_i:X^m_i\to N^n$ ($i=1,2$)
be two real analytic $\op{DC}$-analytic maps. 
Then $f_2$ is called an {\it analytic extension} 
of $f_1$ if there exists
a $\op{DC}$-analytic map $\phi:X^m_1\to X^m_2$
such that $f_1=f_2\circ \phi$
and $f_1(X^m_1)$ 
is a proper subset of $f_2(X^m_2)$. 
\end{Definition}

\begin{Proposition}\label{prop;Aext}
Let $f:X^m\to N^n$ be a real analytic $m$-dimensional $\op{DC}$-map. 
If $f(X^m)$ is analytically complete, then $f$
does not admit any analytic extension.
\end{Proposition}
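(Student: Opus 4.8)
The plan is to prove the contrapositive: assuming $f$ admits an analytic extension, I will show that $S:=f(X^m)$ fails the defining property of analytic completeness, by producing a real analytic curve $\Gamma\colon[0,1]\to N^n$ with $\Gamma([0,\varepsilon))\subset S$ for some $\varepsilon\in(0,1)$ but $\Gamma(1)\notin S$. So suppose $g\colon Y^m\to N^n$ is an $m$-dimensional DC-analytic map that is an analytic extension of $f$ (Definition \ref{def:r-A6}): there is a DC-analytic map $\phi\colon X^m\to Y^m$ with $f=g\circ\phi$ and with $S=f(X^m)$ a proper subset of $g(Y^m)$. I will take the domain $Y^m$ of the extension to be connected.

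First I would manufacture an open subset of $Y^m$ on which $g$ takes values in $S$. Since $f$ is $m$-dimensional DC-analytic, there is a non-DC-point $x_0\in X^m$ at which $f$ is an immersion (Definition \ref{def:mDC-map}). Writing $f=g\circ\phi$ and applying the chain rule at the ordinary manifold point $x_0$ shows that $\phi$ is an immersion at $x_0$; moreover, since an $m$-dimensional immersed piece cannot pass through the vertex of $\mc C^m_2$, the point $q_0:=\phi(x_0)$ is again a non-DC-point (cf. Definition \ref{def:XN}). Near $x_0$ and $q_0$ both spaces are ordinary real analytic $m$-manifolds, so $\phi$ is a local diffeomorphism there; hence $W:=\phi(U_0)$ is an open neighbourhood of $q_0$ in $Y^m$ for a sufficiently small $U_0\ni x_0$, and $g(W)=f(U_0)\subset S$.

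Next I would choose the exit point and connect it to $W$. Because $g(Y^m)\not\subset S$, some point of $Y^m$ is sent outside $S$. If some non-DC-point $p$ has $g(p)\notin S$, then using that $m\ge 2$ and that the DC-points form a discrete subset $\Sigma$ (as in the proof of Proposition \ref{prop:Long}), the complement $Y^m\setminus\Sigma$ is a connected real analytic $m$-manifold, so I may join $q_0\in W$ to $p$ by a real analytic curve $c\colon[0,1]\to Y^m\setminus\Sigma$ with $c(0)=q_0$ and $c(1)=p$. As $W$ is open and $c(0)\in W$, there is $\varepsilon\in(0,1)$ with $c([0,\varepsilon))\subset W$, and then $\Gamma:=g\circ c$ is real analytic with $\Gamma([0,\varepsilon))\subset g(W)\subset S$ and $\Gamma(1)=g(p)\notin S$, the desired contradiction. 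Otherwise every non-DC-point maps into $S$, and then some DC-point $p$ has $g(p)\notin S$; in this case I would run the curve into $p$ along a cone ruling in a DC-chart, i.e. a segment $t\mapsto f_{DC}(t,P_0)$ (cf. \eqref{eq:fdc}) whose interior consists of non-DC-points, hence maps into $S$. Since $g$ extends analytically across the vertex (Definitions \ref{def:CR} and \ref{def:XN}), the resulting $\Gamma=g\circ c$ is again real analytic, lies in $S$ on $[0,1)$, and leaves $S$ at $t=1$; alternatively one blows up $Y^m$ at $p$ (Lemma \ref{lem:B}) to replace it by a non-DC-point and reduce to the first case.

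The main obstacle is precisely this curve construction: one must simultaneously keep an initial arc of $\Gamma$ inside $S$ and force the curve to leave $S$, while retaining analyticity throughout. The first requirement is handled by starting inside the open set $W\subset\phi(X^m)$, on which $g$ is valued in $S$ by construction; the second is handled by the connectedness of $Y^m$ together with the existence of an exit point. The only genuinely delicate situation is when the exit point is a DC-point, where an ordinary path would break analyticity at the cone vertex; this is resolved by approaching the vertex along a ruling (or, equivalently, by the blow-up of Lemma \ref{lem:B}), so that $\Gamma$ remains real analytic across the cone-like singular point.
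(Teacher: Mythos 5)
Your overall strategy coincides with the paper's: locate an open set $W=\phi(U_0)\subset Y^m$ on which $g$ takes values in $S=f(X^m)$, run a real analytic curve from $W$ to a point where $g$ leaves $S$, and contradict analytic completeness. The genuine gap is in your case (a). You assert that, since $\Sigma$ is discrete and $m\ge 2$, the complement $Y^m\setminus\Sigma$ is a \emph{connected} real analytic manifold, and you then join $q_0$ to the exit point $p$ inside $Y^m\setminus\Sigma$. This is false in general: a DC-manifold is locally modelled on the double cone $\mc C^m_2$, and $\mc C^m_2\setminus\{\mb 0\}$ has two components (the two nappes $x_{m+1}>0$ and $x_{m+1}<0$). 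So deleting a DC-point locally disconnects $Y^m$, and $q_0$ and $p$ may well lie in different components of $Y^m\setminus\Sigma$ --- for instance when $W$ sits in one nappe of a cone neighbourhood and every point mapping outside $S$ sits in the other. In that situation no curve, analytic or not, joins them within $Y^m\setminus\Sigma$, so your case (a) construction does not go through. The dichotomy you set up (exit point a non-DC-point versus a DC-point) is therefore not the relevant one: the real difficulty is that the connecting path may be forced to cross DC-points no matter where it ends.

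The paper resolves exactly this by taking an arbitrary continuous path from $p$ to $q$, noting that it meets only finitely many DC-points, blowing up at each of them (Lemma \ref{lem:B}) --- which reconnects the two nappes through the exceptional fibre $\{0\}\times S^{m-1}$ of the map $f_{DC}$ in \eqref{eq:fdc} --- and only then invoking the fact that two points of a connected real analytic manifold can be joined by a real analytic arc. You do have this tool in hand (you propose the blow-up as an alternative in your case (b)); it simply has to be applied in all cases, not only when the exit point itself happens to be a DC-point. A secondary, more minor point: your claim that $\phi(x_0)$ is automatically a non-DC-point because an $m$-dimensional immersed piece cannot pass through the vertex of $\mc C^m_2$ is plausible but is asserted without proof; the paper sidesteps it by observing that $\phi(U)$ cannot consist entirely of DC-points (these being discrete, the image would be a single point, contradicting that $f|_U$ is an immersion) and then replacing $x_0$ by a nearby point of $U$.
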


To prove this, we prepare the following:

\begin{Lemma}\label{lemma;Aext}
Let $S$ be an analytically complete subset of $N^n$, and let $g:M^m\to N^n$ be an
$m$-dimensional real analytic map 
defined on a  connected $m$-dimensional real analytic
manifold $M^m$
$($cf. Definition \ref{def:Mdim}$)$.
If there exists a non-empty open subset $W$ of $M^m$
satisfying $g(W)\subset S$, then
$g(M^m)$ is also contained in $S$.
\end{Lemma}

\begin{proof}
Since $g$ is $m$-dimensional,
we can find a regular point $p\in U$ of $g$.
We fix an arbitrary  point $q\in M^m$.
Since $M^m$ is connected, there exists a real
analytic map $\sigma:[0,1]\to M^m$ such that
$\sigma(0)=p$ and $\sigma(1)=q$
(cf. \cite[the footnote on page 402]{HP}).
Since $W(\subset M^m)$ is an open subset,
there exists $\epsilon>0$ such that $\sigma([0,\epsilon))$
is contained in $W$.
Then $g\circ \sigma([0,\epsilon))\subset S$, and
the analytic completeness of
$S$ implies that $g\circ \sigma(1)$ also lies in
$S$.
\end{proof}

\begin{proof}[Proof of Proposition \ref{prop;Aext}]
By way of contradiction,
we suppose that $g:Y^m\to N^n$ is a  
$\op{DC}$-map giving an analytic extension of $f$. 
By definition of analytic extension,
there exists
a $\op{DC}$-analytic map $\phi:X^m_1\to X^m_2$
such that $f=g\circ \phi$ on $X^m_1$.
Since $f$ is $m$-dimensional,
applying Proposition \ref{prop:Long},
one can easily observe that
$\phi$ and
$g$ are both $m$-dimensional DC-maps.
Since $f$ is $m$-dimensional,
there exists a non-$\op{DC}$-point $p_0$
and a neighborhood $W$ of $p_0$
so that $f|_W$ is an immersion.
Since $\Sigma$ is discrete,
we may assume that $\phi(W)$
does not contain any $\op{DC}$-points. In particular, $\phi|_W$ is an immersion 
between the same dimensional manifolds,
and so, we may assume that $\phi$ is a $C^\omega$ 
diffeomorphism 
between $W$ and $\phi(W)$.

Since $g$ is an analytic extension of $f$,
we can find a point $q\in Y^m$ such that
$g(q)\not\in f(X^m)$.
Since $Y^m$ is connected, there exists a continuous map
$\gamma:[0,1]\to Y^m$ such that $\gamma(0)=p_0$ and $\gamma(1)=q$. 
Since the set of $\op{DC}$-points $\Sigma$ of $Y^m$ are discrete, 
the map $\gamma$ passes through only finitely many
$\op{DC}$-points $y_1,\ldots,y_k$. 
We let $(U_j,\Phi_j)$ ($j=1,\ldots,k$)
be a local inverse $\op{DC}$-coordinate system
so that $y_j\in\Phi_j(U_j)$ and $U_j$ is connected.
By changing the order of $y_1,\ldots,y_k$ if necessary,
there exist $k$ points $t_1,\cdots,t_k$
such that $\gamma(t_j)=y_j$
($j=1,\ldots,k$) and
$$
0=t_0<t_1<\cdots <t_k<t_{k+1}=1.
$$
Then there exist connected components
$C_1,\ldots,C_{k+1}$ of $Y^m\setminus \Sigma$ 
such that
$$
\gamma([t_{j-1},t_{j}])\subset C_j \qquad (j=1,\ldots,k+1).
$$
By Proposition \ref{prop:Long}, each $g|_{C_j}$
($j=1,\ldots,k+1$) is an $m$-dimensional analytic map.
By Lemma \ref{lemma;Aext},
the fact $g(\phi(W))\subset f(X^m)$
implies that $g(C_1)\subset f(X^m)$.
Since $g(q)\not\in f(X^m)$,
we have
$g(C_{k+1})\not\subset f(M^m)$.
So we can find an index $j\in \{1,\ldots,k\}$ such that
$g(C_j)\subset f(X^m)$ and $g(C_{j+1})\not \subset f(X^m)$.
Taking a sufficiently small $\epsilon>0$
so that
$$
\gamma([t_j-\epsilon,t_j+\epsilon])\subset \Phi(U_j),
$$
we can choose
$x_0,z_0\in U_j$
so that
$\Phi_j(x_0)=\gamma(t_{j}-\epsilon)$ and
$\Phi_j(z_0)=\gamma(t_{j}+\epsilon)$.
Moreover, we can also choose 
open subsets $V,W$ of $U_j$
satisfying
$$
x_0\in V, \quad z_0\in W,\quad
\pi_{\op{DC}}^m(V)\subset C_j,\quad
\pi_{\op{DC}}^m(W)\subset C_{j+1}.
$$
Since $h:=g\circ \pi_{\op{DC}}^m$ is real analytic on
the connected set $U_j$,
the fact $h(V)\subset f(X^m)$ 
and Lemma \ref{lemma;Aext} imply that
$h(U_j)\subset f(X^m)$.
In particular, $g(W) \subset f(X^m)$
holds.
Since $\pi_{\op{DC}}^m(W)$ is an open subset of $C_{j+1}$,
by Lemma \ref{lemma;Aext}, we can conclude
$g(C_{j+1})\subset f(X^m)$, a contradiction.
\end{proof}

For the analytic extension of G-catenoids and W-catenoids 
in $\R^3_1$, the following can be shown.

\begin{Theorem}\label{Cor:EPHK}
The image of the elliptic G-catenoid $f_E$ in $\R^3_1$
$($given in the introduction$)$
is analytically complete.
On the other hand, the image of the
other G-catenoids $f_P$ and $f_H$ 
and the W-catenoid $f_K$ in $\R^3_1$
admit analytic extensions, which are 
analytically complete.
Moreover, after taking the analytic completions, 
all of the images of
these catenoids are $\op{DC}$-submanifolds
of $\R^3_1$.
\end{Theorem}

\begin{proof}
By the classification of 
G-catenoids and W-catenoids in $\R^3_1$
(cf. Appendix A), we know that
G-catenoids are $f_E,f_P,f_H$ and
W-catenoids are $f_E,f_K$. 
The image of $f_E$ is the set $\mc E$ in Example \ref{ex:E0},
which is globally analytic. So $\mc E$ is analytically
complete.

On the other hand, 
as mentioned in the introduction,
the image of $f_K$
is a proper subset of the 
entire (zero-mean curvature) 
graph of the function
$
t=y \tanh x.
$
Since, the entire graph
is globally analytic
(cf. Proposition \ref{prop:first}),
it gives an analytic extension of $f_K$, which is
analytically complete.

As shown in  Example \ref{ex:P0},
the set
$\mc P$
given in 
\eqref{eq:p2}
has the structure of a $\op{DC}$-submanifold
whose inclusion map can be considered as an analytic extension 
of the
map $f_P$ given in \eqref{eq:p-catenoid}.
Since $\mc P$ is globally analytic, it is analytically complete
(cf. Proposition \ref{prop:first}).
Similarly, the set 
$\mc H$
given in 
\eqref{eq:H0}
has the structure of $\op{DC}$-submanifold
(cf. Example \ref{ex:H0}),
whose inclusion map can be
considered as an analytic extension of the
map $f_H$ given in  \eqref{eq:h-catenoid}.
Since $\mc H$ is globally analytic, it is analytically complete
(cf. Proposition \ref{prop:first}).
\end{proof}

\section{Analytic completeness for the images 
of real analytic $\op{DC}$-maps}\label{sec:ap}

In this section, we give a criterion which we can apply 
to show the analytic completeness of G-catenoids in Section 4.  
Throughout this section, we fix a real analytic
 $m$-dimensional DC-manifold $X^m$.
We first prepare several lemmas:

\begin{Lemma}\label{lem:X}
Let $X^m$ be a real analytic $\op{DC}$-manifold 
and $f:X^m\to N^n$ a $\op{DC}$-immersion.
Then for each $p \in X^m$, there exists a triple $(W,\Omega,\tau)$
consisting of
\begin{itemize}
\item a relatively compact neighborhood $W(\subset X^m)$
of $p$,
\item  a neighborhood $\Omega(\subset N^{n})$ 
of $f(p)$,
and 
\item a real analytic function
$\tau:\overline{\Omega}\to \R^{n-m}$
\end{itemize}
such that $Z(\tau)=f(\overline{W})$ and
$f|_{\overline{W}}$ is a homeomorphism between
$\overline{W}$ and $f(\overline{W})$,
where $Z(\tau)$ is the zero set of the function $\tau$.
\end{Lemma}

\begin{proof}
We fix $p\in X$ arbitrarily.
Then there exist
\begin{itemize}
\item an open subset $\Omega$ of $\R^{n}$,
\item a real analytic diffeomorphism $F:\Omega\to N^{n}$, and
\item 
a local inverse $\op{DC}$-coordinate system $(U,\Phi)$
satisfying $p\in \Phi(U)$
\end{itemize}
such that $\Omega\cap {\hat C}^m_2=\hat \pi_{\op{DC}}^m(U\times \{\mb 0_{n-m-1}\})$
and 
$
\Phi=F\circ \hat \pi_{\op{DC}}^m|_{U\times \{\mb 0_{n-m-1}\}}
$
holds on $U$ by identifying $U$ with $U\times \{\mb 0_{n-m-1}\}$
(cf. Definition \ref{def:DCI}).
Since
$$
\hat {\mc C}^m_2=\left\{(x_1,\ldots,x_n)\in \R^n\,;\, 
\sum_{i=1}^m x_j^2=x_{m+1}^2,\,\,
x_{m+2}=\cdots=x_{n}
\right\},
$$
the map
$$
\tau:=(h_0\circ F^{-1},x_{m+2}\circ F^{-1},\ldots,x_n\circ F^{-1}) \qquad
\left(h_0:=-x_{m+1}^2+\sum_{i=1}^m x_j^2\right)
$$
gives the desired  real analytic function.
In fact, we can take a relatively compact neighborhood $W$ of $p$
so that $f(W)\subset \Omega$.
Moreover, shrinking $\Omega$ if necessary,
we may assume that $F$ is defined on
$\overline{\Omega}$ such that 
$f(\overline{W})=\overline{\Omega}\cap Z(\tau)$. 
Since a bijective continuous map from a compact set
to a Hausdorff space is a homeomorphism,
$f|_{\overline{W}}$ is a homeomorphism between
$\overline{W}$ and $f(\overline{W})$.
\end{proof}

\begin{Lemma}\label{prop:X}
Let $f:X^m\to N^{n}$ be a $\op{DC}$-immersion.
Suppose that $I$ is an open interval of  $\R$ and that 
$\Gamma:I\to N^{n}$ is a real analytic map satisfying  
$\Gamma(I) \subset f(X^m)$.
Then there exists a continuous map 
$\gamma:J\to X^m$ 
defined on
an open subinterval $J$ of $I$
such that
$\Gamma=f\circ \gamma$ on $J$.
\end{Lemma}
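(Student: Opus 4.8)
The plan is to realize the desired lift as a local continuous section of a fiber product, and to locate a subinterval on which such a section exists by combining a cardinality argument with the local structure supplied by Lemma \ref{lem:X}. Observe first that wherever a lift exists it is forced: by Lemma \ref{lem:X}, each $p\in X^m$ has a neighborhood $U$ on which $f|_{U}$ is a homeomorphism onto $f(U)=Z(F)$ for a real analytic $F$ defined near $f(p)$, so any lift of $\Gamma$ through $U$ must coincide with $(f|_{U})^{-1}\circ\Gamma$. The entire difficulty is therefore to find a subinterval $J$ on which $\Gamma$ stays inside a single local sheet $f(U)$.

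First I would introduce the fiber product
\[
W:=\Big\{(t,x)\in I\times X^m\,;\,\Gamma(t)=f(x)\Big\}
\]
together with the projection $\pi_1:W\to I$, $\pi_1(t,x)=t$. Since $\Gamma(I)\subset f(X^m)$, every $t\in I$ has a preimage, so $\pi_1$ is surjective and $W$ has cardinality at least that of $I$; in particular $W$ is uncountable. As $I$ and $X^m$ are second countable, $W$ is second countable, and an uncountable second countable space cannot be discrete. Hence I may fix a point $(t_0,p_0)\in W$ that is \emph{not} isolated in $W$.

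Next I would analyze $W$ near $(t_0,p_0)$ using Lemma \ref{lem:X}. Choosing $U_0\ni p_0$, $\Omega_0\ni f(p_0)$ and $F_0:\Omega_0\to\R^k$ with $f(U_0)=Z(F_0)$ and $f|_{U_0}$ a homeomorphism, and setting $V:=\Gamma^{-1}(\Omega_0)$, one checks that $\pi_1$ restricts to a homeomorphism of $W\cap(V\times U_0)$ onto $\{t\in V\,;\,F_0(\Gamma(t))=0\}$, with inverse $t\mapsto\bigl(t,(f|_{U_0})^{-1}(\Gamma(t))\bigr)$: indeed, if $\Gamma(t)=f(x)$ with $x\in U_0$ then $F_0(\Gamma(t))=0$ and $x=(f|_{U_0})^{-1}(\Gamma(t))$, while conversely $F_0(\Gamma(t))=0$ with $t\in V$ gives $\Gamma(t)\in Z(F_0)\cap\Omega_0=f(U_0)$. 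Because $f|_{U_0}$ is injective, any point of $W\cap(V\times U_0)$ over $t_0$ equals $(t_0,p_0)$; so the non-isolatedness of $(t_0,p_0)$ forces $t_0$ to be a non-isolated zero of the real analytic map $F_0\circ\Gamma$ on $V$.

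Finally I would invoke real analyticity: a real analytic map of one variable with a non-isolated zero vanishes identically near that zero, so $F_0\circ\Gamma\equiv\mb 0$ on some open subinterval $J\ni t_0$, which I shrink so that $\Gamma(J)\subset\Omega_0$. Then $\Gamma(J)\subset Z(F_0)\cap\Omega_0=f(U_0)$, and $\gamma:=(f|_{U_0})^{-1}\circ\Gamma:J\to X^m$ is continuous with $f\circ\gamma=\Gamma$ on $J$, as required. The main obstacle is exactly the multi-sheetedness of $f(X^m)$: since the image need not be the zero set of a single global analytic function, $\Gamma$ could a priori jump between sheets meeting along cone-like singular images, and no single $F$ detects membership in $f(X^m)$. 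Passing to $W$ and extracting a non-isolated point is the device that converts the pointwise hypothesis $\Gamma(I)\subset f(X^m)$ into the existence of a genuine subinterval lying in one sheet, where the local analytic equation from Lemma \ref{lem:X} then does the work.
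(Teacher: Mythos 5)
Your proof is correct, but it takes a genuinely different route from the paper's. Both arguments reduce the problem, via Lemma \ref{lem:X}, to finding a subinterval on which $\Gamma$ stays in a single local sheet $f(U)=Z(F)$; the difference is how that subinterval is produced. The paper extracts a countable subcover $\{U_{p_j}\}$ of $X^m$ (second countability), observes that $I=\bigcup_j Z_j$ with $Z_j$ the zero set of $F_{p_j}\circ\Gamma$ on $\Gamma^{-1}(\Omega_{p_j})$, and applies Baire's category theorem to find some $Z_{j_0}$ with nonempty interior --- a purely topological step that uses only the continuity of $\Gamma$. You instead pass to the fiber product $W$, use the cardinality argument (an uncountable second countable space has a non-isolated point) to locate a point $(t_0,p_0)$ where zeros of some $F_0\circ\Gamma$ accumulate, and then invoke the one-variable identity theorem for real analytic maps to upgrade that accumulation to an interval of zeros. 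Your local analysis of $W\cap(V\times U_0)$ is careful and correct: injectivity of $f|_{U_0}$ over the fiber of $t_0$ is exactly what rules out the degenerate case $t_n=t_0$, so $t_0$ really is a non-isolated zero of $F_0\circ\Gamma$. What each approach buys: the Baire argument is shorter, yields the interval directly, and would survive weakening $\Gamma$ to a merely continuous curve; your argument trades Baire for the identity theorem and therefore genuinely uses the real analyticity of $\Gamma$ (it would fail for continuous $\Gamma$, whose zero sets can be fat Cantor-like sets with empty interior), but it is self-contained modulo elementary point-set topology and pinpoints a concrete accumulation point of the lifting correspondence. Both are valid proofs of the lemma as stated.
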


\begin{proof}
We let $\{(U_p,\Omega_p,\tau_p)\}_{p\in X^m}$
be a family of triples determined by  the previous lemma.
Since $X^m$  
satisfies the second axiom of countability,
there exists a sequence $\{p_j\}_{j=1}^\infty$ 
of points in $X^m$ 
such that $\{U_{p_j}\}_{j=1}^\infty$ is
an open covering of $X^m$. 
We set
$$
W_j:=\Gamma^{-1}(\Omega_{p_j})\,\, (\subset I)
$$
and consider the map
$
\lambda_j:=\tau_{p_j}\circ \Gamma:W_j\to \R^l\,\, (j=1,2,\dots). 
$
We denote by $Z_j$ the zero set of $\lambda_j$
on $W_j$. Since $\Gamma(I) \subset f(X^m)$
and $f(X^m)\subset \bigcup_{j=1}^\infty Z(\tau_{p_j})$,
we have
$
I=\bigcup_{j=1}^\infty Z_j.
$
By Baire's category theorem, there exists a number $j_0$
such that $Z_{j_0}$ has an interior point.
So there exists an open subinterval $J$ of $I$
such that $J\subset Z_{j_0}$, that is,
$\gamma(J)\subset Z(\tau_{p_{j_0}})=f(U_{p_{j_0}})$.
Since $f|_{U_{p_{j_0}}}$ is a homeomorphism,
we can set
$
\phi:=(f|_{U_{p_{j_0}}})^{-1}\circ \Gamma,
$
and $f\circ \phi=\Gamma$ holds on $J$.
\end{proof}

\begin{Lemma}\label{prop:Y}
Let $f:X^m\to N^n$ be a $\op{DC}$-immersion.
Suppose that $I$ is an open interval containing 
$0\in \R$,
and
$\gamma_{i}:I\to X^m$ $(i=1,2)$
are two real analytic maps so that 
$f\circ \gamma_1(t)=f\circ \gamma_2(t)$
for all $t\in I$.
If $\gamma_1(0)=\gamma_2(0)$,
then $\gamma_1$ coincides with $\gamma_2$
as maps on the interval $I$.
\end{Lemma}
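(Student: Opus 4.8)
The plan is to run a standard connectedness argument on the locus where the two lifts agree, the whole weight of which rests on the local injectivity of $f$ already furnished by Lemma \ref{lem:X}. Set
\[
A:=\{t\in I\,;\,\gamma_1(t)=\gamma_2(t)\}.
\]
The hypothesis $\gamma_1(0)=\gamma_2(0)$ forces $0\in I$ and $0\in A$, so $A$ is non-empty. Since $I$ is an open interval of $\R$, hence connected, it suffices to prove that $A$ is both closed and open in $I$; then $A=I$, which is exactly the assertion that $\gamma_1$ and $\gamma_2$ coincide on $I$.

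Closedness is immediate. As $\gamma_1$ and $\gamma_2$ are real analytic, hence continuous, the map $t\mapsto(\gamma_1(t),\gamma_2(t))$ is continuous into $X^m\times X^m$, and $A$ is the preimage of the diagonal. Because a real analytic DC-manifold is by definition Hausdorff, the diagonal is closed, and therefore so is $A$.

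The essential step is openness, and this is where Lemma \ref{lem:X} does the work. Fix $t_0\in A$ and set $p:=\gamma_1(t_0)=\gamma_2(t_0)$. By Lemma \ref{lem:X} there is a neighborhood $U$ of $p$ in $X^m$ on which $f|_U$ is a homeomorphism onto its image; in particular $f|_U$ is injective. By continuity of $\gamma_1$ and $\gamma_2$ there is a $\delta>0$ such that $\gamma_1(t),\gamma_2(t)\in U$ whenever $t\in I$ and $|t-t_0|<\delta$. For such $t$ the hypothesis gives $f(\gamma_1(t))=f(\gamma_2(t))$, and injectivity of $f|_U$ then forces $\gamma_1(t)=\gamma_2(t)$, i.e.\ $t\in A$. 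Thus $A$ contains a neighborhood of each of its points and is open, so $A=I$.

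I expect the only genuinely delicate point to be ensuring that local injectivity of $f$ is available at \emph{every} $p\in X^m$, including the DC-points, where $f$ is not an immersion and $X^m$ is not a manifold. This is precisely the content of Lemma \ref{lem:X}: using the DC-regularity of $f$, its proof constructs an explicit continuous inverse $\phi^{-1}\circ\Psi$ of $f|_U$, so $f|_U$ is a homeomorphism, and hence injective, whether or not $p$ is a DC-point. With that in hand the clopen argument closes. It is worth remarking that real analyticity of the $\gamma_i$ is not actually used beyond continuity; the entire force of the lemma comes from the local injectivity of a DC-immersion.
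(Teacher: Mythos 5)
Your proof is correct and follows essentially the same route as the paper's: a clopen connectedness argument on the agreement set $A$, with openness supplied by the local injectivity of $f$ from Lemma \ref{lem:X}. If anything, your version is slightly more careful than the paper's, since you explicitly arrange for \emph{both} $\gamma_1(t)$ and $\gamma_2(t)$ to lie in the injectivity neighborhood $U$ before invoking injectivity, whereas the paper's proof only mentions $\gamma_2$.
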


\begin{proof}
We consider the subset 
$
A:=\{t\in I\,;\, \gamma_1(t)=\gamma_2(t)\}.
$
Since $\gamma_1(0)=\gamma_2(0)$, we have $0\in A$, that is,
$A$ is a non-empty set.
Obviously, $A$ is a closed subset.
We fix $t_0\in A$ arbitrarily.
By Lemma \ref{lem:X},
there exist a neighborhood $O(\subset X^m)$
of $\gamma_j(t_0)$, a neighborhood $\Omega(\subset N^n)$ 
of 
$$
p_0:=f\circ \gamma_1(t_0)=f\circ \gamma_2(t_0)
\qquad (j=1,2)
$$
and a real analytic function
$\tau:\Omega\to \R^l$ ($l\ge 1$) 
such that $Z(\tau)=f(O)$ and
$f|_{O}$ gives a homeomorphism between
$O$ and $f(O)$.
By the continuity of $\gamma_j$ ($j=1,2$), 
there exists $\delta(>0)$ such that
$\gamma_j(t)\in O$ for $|t-t_0|<\delta$ for $j=1,2$.
By Lemma \ref{lem:X},
the inverse map $g$ of $f|_{O}$ is defined
and $\gamma_1(t)=g\circ f\circ \gamma_2(t)$
holds when $t\in (t_0-\delta,t_0)$.
Then $\gamma_1(t)=g\circ f\circ \gamma_2(t)$
holds for $t\in (t_0-\delta,t_0+\delta)$,
because $\gamma_1$ and $\gamma_2$ are real analytic.
So $(t_0-\delta,t_0+\delta)$ is a subset of $A$, and
$A$ is an open subset of $I$.
Since $I$ is connected, we have $A=I$, proving the
assertion.
\end{proof}

We prepare one more lemma:

\begin{Lemma}\label{prop:Z}
Let $f:X^m\to N^n$ be a $\op{DC}$-immersion, and let
$\Gamma:[a,b]\to N^n$  $(a<b)$ be a real analytic map.
If there exist $c\in (a,b)$, a continuous map 
$\gamma:[a,c)\to X^m$ 
and a sequence $\{t_n\}$ on $[a,c)$
converging to $c$
such that
$\Gamma=f\circ \gamma$ on $[a,c)$
and $(p:=)\dy\lim_{n\to \infty}\gamma(t_n)$ exists,
then there exists $0<\epsilon(\le b-c)$ 
and a continuous map $\tilde \gamma:[a,c+\epsilon)\to X^m$
such that $f\circ \tilde \gamma=\Gamma$
and $\tilde \gamma(t)=\gamma(t)$ for $t\in [a,c)$.
\end{Lemma}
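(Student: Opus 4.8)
The plan is to combine the local normal-form from Lemma \ref{lem:X} at the limit point $p$ with the identity theorem for real analytic functions to propagate the lift across $c$, and then to reconcile the resulting local lift with the given $\gamma$ by a connectedness argument.

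First I would identify the image of $p$. Since $\Gamma(t_n)=f(\gamma(t_n))$, letting $n\to\infty$ and using continuity of $\Gamma$ and $f$ together with $\gamma(t_n)\to p$ gives $\Gamma(c)=f(p)$. Then I apply Lemma \ref{lem:X} at $p$ to obtain a neighborhood $U(\subset X^m)$ of $p$, a neighborhood $\Omega(\subset N^n)$ of $f(p)=\Gamma(c)$, and a real analytic map $F:\Omega\to\R^k$ with $Z(F)=f(U)$, where $f|_U$ is a homeomorphism onto $f(U)$. Choosing $\delta>0$ so small that $\Gamma$ maps $(c-\delta,c+\delta)\cap[a,b]$ into $\Omega$, I form the real analytic map $\lambda:=F\circ\Gamma$ on $(c-\delta,c+\delta)$.

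The key step is the following. For all large $n$ one has $\gamma(t_n)\in U$ (because $\gamma(t_n)\to p$ and $U$ is open) and $t_n\in(c-\delta,c)$, whence $\Gamma(t_n)=f(\gamma(t_n))\in f(U)=Z(F)$ and therefore $\lambda(t_n)=0$. Passing to a strictly increasing subsequence, the zeros of $\lambda$ accumulate at the interior point $c$; by the identity theorem for real analytic functions $\lambda\equiv 0$ on $(c-\delta,c+\delta)$, that is, $\Gamma(t)\in f(U)$ for every such $t$. I can then define the continuous map $\tilde\gamma:=(f|_U)^{-1}\circ\Gamma$ on $(c-\delta,c+\delta)$, which satisfies $f\circ\tilde\gamma=\Gamma$ there and $\tilde\gamma(c)=p$.

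The hard part will be matching $\tilde\gamma$ with $\gamma$ on the overlap $(c-\delta,c)$, since $f$ need not be injective and there is no global uniqueness of lifts. I would handle this by considering $A:=\{t\in(c-\delta,c)\,;\,\gamma(t)=\tilde\gamma(t)\}$. This set is closed, being the agreement set of two continuous maps into the Hausdorff space $X^m$, and it is nonempty since it contains $t_n$ for all large $n$: there $\gamma(t_n)\in U$ forces $(f|_U)^{-1}(\Gamma(t_n))=\gamma(t_n)$, i.e. $\tilde\gamma(t_n)=\gamma(t_n)$. It is also open, because at any $t_*\in A$ the common value lies in the open set $U$, so both $\gamma$ and $\tilde\gamma$ remain in $U$ on a neighborhood of $t_*$, and the injectivity of $f|_U$ forces them to agree there. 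As $(c-\delta,c)$ is connected, $A=(c-\delta,c)$. Gluing $\gamma$ on $[a,c)$ with $\tilde\gamma$ on $(c-\delta,c+\delta)$, which coincide on the overlap, then yields a continuous map on $[a,c+\epsilon)$ with $\epsilon:=\min(\delta,b-c)$ that extends $\gamma$ and satisfies $f\circ\tilde\gamma=\Gamma$, completing the proof.
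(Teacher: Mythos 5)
Your proof is correct and follows essentially the same route as the paper: identify $f(p)=\Gamma(c)$, invoke Lemma~\ref{lem:X} at $p$, show that $\Gamma$ maps a neighborhood of $c$ into $f(U)$, and extend the lift by $(f|_U)^{-1}\circ\Gamma$. In fact you supply two justifications that the paper leaves implicit --- the identity-theorem argument giving $F\circ\Gamma\equiv 0$ (the paper simply asserts $\Gamma((c-\epsilon,c+\epsilon))\subset f(U)$) and the open--closed connectedness argument matching $(f|_U)^{-1}\circ\Gamma$ with $\gamma$ on the overlap --- so your write-up is, if anything, the more complete one.
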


\begin{proof}
By the continuity of $f$, we have
$$
f(p) =
f(\lim_{k\to \infty}\gamma(t_k))=\lim_{k\to \infty}f(\gamma(t_k))
=\lim_{k\to \infty}\Gamma(t_k)=\Gamma(c). 
$$
We then take a triple $(U,\Omega,\tau)$ at $p$
as in Lemma \ref{lem:X} so that
$\overline{U}$ is compact
and $f(\overline{U})$ coincides with 
the zero set of the $\R^{n-m}$-valued function $\tau$.
By the continuity of $\Gamma$, there exists
a sufficiently small $\epsilon(>0)$ such that
$\Gamma(c-\epsilon, c+\epsilon)\subset \Omega$.
Since 
$$
\Gamma ([a, c))=f\circ \gamma([a,c))\subset f(X)=Z(\tau),
$$
$\tau\circ \Gamma(t)$ is equal to zero  for $t\in [a,c)$,
by the real analyticity  of $\tau\circ \Gamma$,
we have $\tau\circ \Gamma(t)=0$ for
$t\in (c-\epsilon, c+\epsilon)$, which implies 
$\Gamma((c-\epsilon,c+\epsilon))\subset f(\overline{U})$.
Since $f|_{\overline U}$ is a homeomorphism (cf. Lemma \ref{lem:X}),
$
(f|_{\overline{U}})^{-1}\circ \Gamma(t)\,\, (t\in (c-\epsilon,c+\epsilon))
$ 
is a continuous map which coincides with $\gamma$
on $(c-\epsilon,c)$, and so $\gamma$ can be
extended to $[a,c+\epsilon)$.
\end{proof}

To define \lq\lq arc-properness", we give the following definition:

\begin{Def}
Let $X$ be a topological space,
and let $f:X\rightarrow N^n$ 
be a continuous map.
A continuous map 
$\sigma:[0,1)\to X$ is said to be {\it $(C^r,f)$-extendable}
if $f\circ \sigma(t)$ can be 
extended to a $C^r$-map defined on $[0,1]$,
where $r$ is a non-negative 
integer or $\infty$ or $r=\omega$.
\end{Def}

We now define the concept \lq\lq arc-properness" as follows:

\begin{Def}\label{def:ap}
Let $X$ be a locally compact Hausdorff space satisfying the
second axiom of countability (as a consequence,
$X$ is metrizable, affecting convergence of sequences of points on $X$). 
We let $f:X\rightarrow N^n$ 
be a continuous map.
We fix $r$ as a non-negative 
integer or $\infty$ or $r=\omega$. 
Then $f$ is called \textit{$C^r$-arc-proper}
if 
for each $(C^r,f)$-extendable
continuous map $\sigma:[0,1) \rightarrow X$, 
there exists a sequence $\{t_k\}_{k=1}^\infty$ 
on $[0,1)$
converging to $1$ 
such that
$
\dy\lim_{k\to \infty}\sigma(t_k)
$
exists.
\end{Def}

\begin{Prop}\label{prop:P}
Properness implies $C^r$-arc-properness
for $r\in \{0,1,\dots\}\cup \{\infty, \omega\}$. 
\end{Prop}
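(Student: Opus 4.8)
The plan is to apply properness directly: the $(C^r,f)$-extendability of a curve packages exactly the information needed to produce a compact set whose preimage traps the tail of the curve, and compactness then delivers a convergent subsequence. Only the continuity of the extended map is used, so the argument will be uniform in $r$.

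First I would fix an arbitrary $(C^r,f)$-extendable continuous curve $\sigma:[0,1)\to X$ and set $\Gamma:=f\circ\sigma$. By hypothesis $\Gamma$ extends to a $C^r$-map on $[0,1]$; in particular the extension, which I also denote by $\Gamma$, is continuous on the compact interval $[0,1]$. Hence $K:=\Gamma([0,1])$ is a compact subset of $N^n$.

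Next, since $f$ is proper, $f^{-1}(K)$ is a compact subset of $X$. I observe that $\sigma([0,1))\subset f^{-1}(K)$, because $f(\sigma(t))=\Gamma(t)\in K$ for every $t\in[0,1)$. I then choose any sequence $\{t_k\}\subset[0,1)$ with $t_k\to 1$ (for instance $t_k=1-1/k$), so that $\{\sigma(t_k)\}$ lies entirely in the compact set $f^{-1}(K)$. Because $X$ is locally compact Hausdorff and second countable, it is metrizable, so its compact subsets are sequentially compact; therefore $\{\sigma(t_k)\}$ admits a subsequence $\{\sigma(t_{k_j})\}$ converging to some point of $f^{-1}(K)$. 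Re-indexing, $\{t_{k_j}\}$ is a sequence in $[0,1)$ converging to $1$ along which $\lim_{j\to\infty}\sigma(t_{k_j})$ exists, which is precisely the conclusion required by $C^r$-arc-properness.

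I do not expect any real obstacle here; the proof is a short two-step extraction. The only subtlety to flag is the passage from compactness of $f^{-1}(K)$ to sequential compactness, which is where the standing assumption that $X$ is locally compact Hausdorff and second countable (hence metrizable) from Definition \ref{def:ap} is invoked.
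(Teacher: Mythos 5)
Your proof is correct and follows essentially the same route as the paper's: extend $f\circ\sigma$ to a continuous curve on $[0,1]$, take $K$ as its compact image, use properness to get compactness of $f^{-1}(K)$, and extract a convergent subsequence of $\sigma(t_k)$. Your explicit remark on metrizability guaranteeing sequential compactness is exactly the point the paper builds into Definition \ref{def:ap}.
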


\begin{proof}
Let 
$f:X\rightarrow N^n$ be a proper 
continuous map. We fix a 
$(C^r,f)$-extendable
continuous map $\sigma:[0,1) \rightarrow X$. 
Then $f\circ \sigma(t)$ can be 
extended to a $C^r$-map $\Gamma:[0,1]\to N^n$.
Since $K:=\Gamma([0,1])$ is compact,
the assumption that $f$ is proper implies
$f^{-1}(K)$
is compact.
We let $\{t_k\}_{k=1}^\infty$ be a sequence on
$[0,1)$ converging to $1$.
Since $f^{-1}(K)$ is compact, 
the sequence 
$\{\sigma(t_k)\}_{k=1}^\infty$ 
has a convergent subsequence. 
\end{proof}

\begin{Example}\label{ex:T-comp}
We consider the image $f(\R)$ of the real analytic immersion 
\begin{equation}\label{eq:T}
 f:\R\ni t\mapsto (t, \sqrt{2}t)\in T^2
  :=\R^2/\Z^2.
\end{equation}
Although $f$ is not a proper map,
as will be seen below,
$f$ is $C^0$-arc-proper.
Let $\Gamma:[0,1]\to T^2$ be a 
continuous map satisfying $\Gamma([0,1))\subset f(\R)$. 
We denote by
 $\pi:\R^2\to T^2$ the canonical covering projection.
Then by the homotopy lifting property on covering spaces,
there exists a continuous map
$
\tilde \Gamma:I\to \R^2
$
such that $\pi\circ \tilde \Gamma=\Gamma$ on $I$
and 
$$
\tilde \Gamma(0)\in A,\qquad
A:=\{(x,y)\in \R^2\,;\, y=\sqrt{2}x\}.
$$
Since $A$ is a closed subset of $\R^2$,
we have $\tilde\Gamma(1)\in A$
and so
$$
\Gamma(1)=\pi\circ \tilde\Gamma(1)\in \pi(A)=f(\R).
$$
Thus, $f$ is $C^0$-arc-proper. 
\end{Example}

\begin{Exa}\label{ex:Kok}
Consider the analytic map
$
f(t):=(t,e^{-1/t})\,\, (t>0)
$
in the $xy$-plane.
As will be shown below, $f$ is a $C^\omega$-arc-proper immersion.
We set $S:=f((0,\infty))$.
Let $\sigma:[0,1) \rightarrow (0,\infty)$  be
a  $(C^\omega,f)$-extendable continuous map. 
Then there exists a real analytic map  $\Gamma:[0,1]\to \R^2$
such that $\Gamma(s)=f\circ \sigma(s)$ holds for $s\in [0,1)$.
We can write
$
\Gamma(s)=(x(s),y(s)) \,\, (s\in [0,1]).
$
Suppose that $\Gamma(1)\not \in S$.
Since $\overline{S}\setminus S=\{(0,0)\}$,
we have
$
\Gamma(1)=(0,0).
$
Moreover, since $\Gamma([0,1))\subset S$,
we have
$$
y(s)=e^{-1/x(s)},\quad x(s)>0 \qquad (s\in [0,1)).
$$
Then one can inductively check that 
$$
\lim_{s\to 1-0}\frac{d^k y}{ds^k}(s)=0\qquad (k=0,1,2,\dots)
$$
holds. 
This contradicts the fact that $y(s)$ is
a real analytic function at $s=1$.
Thus, we have $\Gamma(1)\in S$.
Then $x(1)>0$ and
$$
(x(s),y(s))=\Gamma(s)=f\circ \sigma(s)=(\sigma(s),e^{-1/\sigma(s)})
$$
hold. In particular,	
$\sigma(s)=x(s)$ converges to $x(1)$ as $s$ tends to $1$.
Thus, $f$ is a $C^\omega$-arc proper map.
By the reflection with respect to
the $y$-axis, the map $f$ can be
extended as a $C^\infty$-map defined by
$$
F(t):=
\begin{cases}
f(t)
&\mbox{($t>0$)},\\
(0,0) \qquad & \mbox{($t=0$)},\\
(t,e^{1/t})&\mbox{($t<0$)}.
\end{cases}  
$$
Then the map $F(t)$ itself is a smooth map on
$\R^2$, and $\sigma(t):=t$ ($t>0$) satisfies
$f\circ \sigma(t)=F(t)$ for $t>0$.
Since $\dy\lim_{t\to 0}\sigma(t)=0$ holds and
the origin $0$ does not
belong to $ (0,\infty) $,
$f$ is not $C^\infty$-arc proper.
So  $f$ gives an example which is
not $C^\infty$-arc-proper 
but is $C^\omega$-arc-proper.
\end{Exa}

\begin{Thm}\label{thm:first}
Let $f:X^m\to N^n$ be a $\op{DC}$-immersion which is
$C^\omega$-arc-proper as a continuous map. 
Then $f(X^m)$ is analytically complete.
In particular, if $f$ is a proper map,  
then $f(X^m)$ is analytically complete.
\end{Thm}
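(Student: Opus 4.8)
The plan is to unwind the definition of analytic completeness directly. Fix a real analytic $\Gamma:[0,1]\to N^n$ with $\Gamma([0,\epsilon))\subset f(X^m)$ for some $\epsilon\in(0,1)$, and set $Z:=\{t\in[0,1]\,;\,\Gamma(t)\in f(X^m)\}$; the goal is $Z=[0,1]$. I would study the critical value
$$
c^*:=\sup\{c\in[0,1]\,;\,[0,c]\subset Z\},
$$
which satisfies $c^*\ge\epsilon>0$ and $[0,c^*)\subset Z$ by construction, and I aim to prove both $c^*=1$ and $c^*\in Z$.

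First I would produce a continuous lift of $\Gamma$ over a left-neighborhood of $c^*$. Since $\Gamma$ takes values in $f(X^m)$ on the open interval $(0,c^*)$, Lemma \ref{prop:X} furnishes a continuous $\gamma$ with $\Gamma=f\circ\gamma$ on some subinterval; after fixing a base point $a_0$ there, let $\beta$ denote the supremum of all $b\le c^*$ to which $\gamma$ extends continuously as a lift on $[a_0,b)$. Continuous lifts of $\Gamma$ are unique once they agree at a point — this is immediate because each $f|_U$ in Lemma \ref{lem:X} is a homeomorphism onto its image — so these extensions glue to a single lift $\gamma:[a_0,\beta)\to X^m$. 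To see $\beta=c^*$, suppose $\beta<c^*$: reparametrizing $\gamma$ to a curve on $[0,1)$, the analyticity of $\Gamma$ on all of $[0,1]$ makes that curve $(C^\omega,f)$-extendable, so $C^\omega$-arc-properness (Definition \ref{def:ap}) produces a sequence $t_k\to\beta$ along which $\gamma$ converges; choosing $b\in(\beta,c^*)$ gives $\Gamma([a_0,b])\subset f(X^m)$, and Lemma \ref{prop:Z} then extends the lift strictly past $\beta$, contradicting maximality. Hence $\gamma$ is defined on all of $[a_0,c^*)$.

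With the lift in hand I would apply arc-properness once more, now at $c^*$ itself: the same reparametrization is $(C^\omega,f)$-extendable, so there exist $t_k\to c^*$ with $\gamma(t_k)\to p$ for some $p\in X^m$, and continuity of $f$ gives $\Gamma(c^*)=\lim f(\gamma(t_k))=f(p)$. Choosing a triple $(U,\Omega,F)$ at $p$ as in Lemma \ref{lem:X}, I consider the real analytic function $\mu:=F\circ\Gamma$ on a neighborhood of $c^*$. Since $\gamma(t_k)\to p$ eventually lies in the open set $U$, each $\Gamma(t_k)=f(\gamma(t_k))$ lies in $f(U)=Z(F)$, so $\mu(t_k)=0$ with $t_k\to c^*$; real analyticity forces $\mu\equiv0$ near $c^*$, whence $\Gamma$ carries a whole neighborhood of $c^*$ into $Z(F)=f(U)\subset f(X^m)$. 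This one statement closes the argument: it shows $\Gamma(c^*)\in f(X^m)$, and if $c^*<1$ it also places a right-neighborhood of $c^*$ in $Z$, contradicting the definition of $c^*$. Therefore $c^*=1$ and $\Gamma([0,1])=\Gamma([0,c^*])\subset f(X^m)$, proving analytic completeness; the final assertion follows since proper maps are $C^\omega$-arc-proper by Proposition \ref{prop:P}.

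The step I expect to be the main obstacle is the extension across $c^*$, because Lemma \ref{prop:Z} demands that $\Gamma$ already take values in $f(X^m)$ slightly beyond the endpoint, which near $c^*$ is precisely what remains to be shown. The way around this apparent circularity is the analytic bootstrap of the previous paragraph: rather than feeding an unproven containment into Lemma \ref{prop:Z}, one observes that the pulled-back function $F\circ\Gamma$ vanishes along a sequence accumulating at $c^*$ — simply because the lift lands in $U$ — so its identical vanishing, and hence the containment past $c^*$, is delivered for free by real analyticity.
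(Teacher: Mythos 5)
Your proof is correct, and its skeleton coincides with the paper's: start a continuous lift of $\Gamma$ via Lemma \ref{prop:X}, glue lifts using their uniqueness (Lemma \ref{prop:Y}, which indeed only needs continuity), take the supremum of the parameters to which the lift extends, use $C^\omega$-arc-properness to extract a limit point $p$ of the lift there, and then use the local triple $(U,\Omega,F)$ of Lemma \ref{lem:X} to push further. The genuine difference is exactly the point you flag at the end. The paper closes the argument by invoking Lemma \ref{prop:Z} at the maximal parameter $c$, but that lemma assumes $\Gamma([a,b])\subset f(X)$ for some $b>c$, a containment that is not available at that stage (only $\Gamma([0,\varepsilon))\subset f(X^m)$ and $\Gamma([t_0,c])\subset f(X^m)$ are known); the same issue is latent in the proof of Lemma \ref{prop:Z} itself, where the step ``$\Gamma((c-\epsilon,c+\epsilon))\subset f(U)$'' is asserted from the hypothesis rather than derived. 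Your analytic bootstrap --- $F\circ\Gamma$ is real analytic on a neighborhood of $c^*$ because $\Gamma(c^*)=f(p)\in\Omega$, it vanishes along $t_k\to c^*$ because the lift eventually lands in $U$ and $f(U)=Z(F)$, hence it vanishes identically --- supplies precisely the missing containment and uses nothing beyond the real analyticity of $\Gamma$. What your version buys is a self-contained, airtight treatment of the critical extension step (and, as a bonus, it yields $\Gamma([0,1])\subset f(X^m)$ directly, whereas the paper proves only $\Gamma(1)\in f(X^m)$ and leaves the reduction to that case implicit); what the paper's version buys is brevity by delegating to Lemma \ref{prop:Z}. In effect your argument is the one that makes the paper's proof, and the proof of Lemma \ref{prop:Z}, rigorous.
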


We will apply this to discuss the analytic extension of
G-catenoids in Section \ref{app:Gcatenoids}.

\begin{proof}
Let $\Gamma : [0, 1] \to N^{n}$ be an analytic map such
that $\Gamma ([0, \varepsilon ))\subset f(X^{m})$ 
for some $\varepsilon \in (0, 1]$. It is sufficient to show that 
$\Gamma (1) \in f(X^{m})$.
Given $0\leq a< b\leq 1$, a continuous map 
$\sigma : [a, b) \to X^{m}$ is called a {\it lift} of $\Gamma$ 
on the  interval [a, b) if 
$f\circ \sigma (t) = \Gamma (t)$ for all $t\in [a, b)$. 
By Lemma \ref{prop:Y}, if two lifts ${\sigma}_{1}$ 
and $\sigma_{2}$ of $\Gamma$ on $[a, b)$ satisfy 
$\sigma_{1} (a) = \sigma_{2} (a)$, 
then $\sigma_{1} (t)= \sigma_{2} (t)$ for all $t\in [a, b)$. 
By Lemma \ref{prop:X}, there exists $s_{0}$, $c_{0}$ with 
$0< s_{0}< c_{0}< \varepsilon $ and a lift 
$\sigma_{0}: [s_{0}, c_{0}) \to X^{m}$ of $\Gamma$ on $[s_{0}, c_{0})$.
Let 
$$
\mc C :=\{ c_{1}\in (s_{0}, 1] \, ; \, \text{there 
exists a lift $\sigma$ 
of $\Gamma$ on $[s_{0}, c_{1})$ with 
$\sigma (s_{0}) = \sigma_{0} (t_{0})$}\}. 
$$
Then $\mc C$ is nonempty, since $c_{0} \in \mc C$. 
Let $c$ be the supremum 
of $\mc C$. By definition, 
for each positive integer $k$,
there exists a lift $\sigma_k:I_k\to X^m$ 
of $\Gamma$ defined on an interval $I_k$ containing $(0, s_0-1/k)$
such that 
$\sigma_{k} (s_{0}) = \sigma_{0} (s_{0})$.
By Lemma \ref{prop:Y},
$\sigma_{k}=\sigma_{k+1}$ holds on $(0, s_0-1/k)$.
So there exists a unique continuous map $\gamma:(0, s_1)\to X$ 
satisfying $\gamma=\sigma_k$ on $(0, s_1-1/k)$.
If $s_1< 1$, then by  Lemma \ref{prop:Z}, $\gamma$ 
can be continuously extended on $[s_1, c+{\varepsilon}_{1} )$ 
for some ${\varepsilon}_{1} >0$ such that $f\circ \gamma = \Gamma$,  
which contradicts the maximality of $c$. 
So $s_1=1$. Taking a sequence $\{t_k\}_{k=1}^\infty$
on $[s_0,1)$ converging to $1$,
we have 
$$
\Gamma(1)=\lim_{k\to \infty}\Gamma(t_k)
=f(\lim_{k\to \infty}\gamma(t_k))\in f(X^m),
$$
proving the assertion.
\end{proof}

As a consequence,
we get the following:   

\begin{Corollary}\label{cor:deep}
{\it 
Let $X^m$ be a connected $C^\omega$-manifold
and  $f:X^m\to N^n$ a $C^\omega$-immersion.
If $f$ is a $C^\omega$-arc-proper map, 
then $f(X^m)$ is analytically complete.}
\end{Corollary}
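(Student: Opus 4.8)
The plan is to deduce the corollary from Theorem~\ref{thm:first} by recognizing the real analytic immersion $f$ as a particular real analytic DC-immersion and then applying that theorem verbatim. As already noted in Section~2, a usual real analytic manifold may be regarded as a DC-manifold all of whose points are non-DC-points. Accordingly, the first step is to endow $X^m$ with such a trivial DC-structure: starting from an ordinary analytic atlas $\{(U_\lambda,\psi_\lambda)\}$ with $\psi_\lambda(U_\lambda)\subset \R^m$, I would compose each $\psi_\lambda$ with an analytic open embedding of its image into the regular locus $\mc C^m_2\setminus\{\mb 0\}$ (for example using $f_{DC}$ from \eqref{eq:fdc} restricted to the set $\{t\ne 0\}$, where it is a local analytic diffeomorphism onto the smooth part of the double cone). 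Shrinking the $U_\lambda$ if necessary, this produces DC-coordinate charts $\phi_\lambda:U_\lambda\to\mc C^m_2$ that avoid the standard double-cone point and have real analytic transition maps, so that $X^m$ becomes a connected real analytic DC-manifold with empty set of DC-points.

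Next I would verify that, with respect to this DC-structure and assuming $n>m$ as in Definition~\ref{def:XN}, the immersion $f$ has only DC-regular points, i.e.\ that $f$ is a real analytic DC-immersion. Fixing $p\in X^m$, the local normal form for analytic immersions (the analytic immersion theorem) furnishes analytic coordinates centred at $p$ together with a real analytic diffeomorphism $\Phi$ on a neighborhood $\Omega$ of $f(p)$ that carries $f$ to the standard inclusion $a\mapsto(a,\mb 0_{n-m-1})$. To match the DC-formalism I would choose the source coordinates to be the DC-charts $\phi$ constructed above rather than flat charts into $\R^m$; since $p$ is a non-DC-point this only requires identifying an open subset of $\R^m$ with an open subset of the smooth $m$-manifold $\mc C^m_2\setminus\{\mb 0\}$ and absorbing this identification into $\Phi$. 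The resulting relation $\Phi\circ f\circ\phi^{-1}(a)=(a,\mb 0_{n-m-1})$ for $a\in\phi(U)\subset\mc C^m_2$ is precisely the DC-regularity condition of Definition~\ref{def:XN}. Hence every point of $X^m$ is DC-regular and $f$ is a real analytic DC-immersion.

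Finally, I would apply Theorem~\ref{thm:first}. By hypothesis $f$ is $C^\omega$-arc-proper as a continuous map, and it has just been realized as a real analytic DC-immersion on the connected DC-manifold $X^m$, so the theorem applies directly and yields that $f(X^m)$ is analytically complete.

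The step I expect to be the main obstacle is the compatibility bookkeeping in the second paragraph: phrasing the standard immersion normal form so that the source chart genuinely lands in $\mc C^m_2$ and so that the straightening diffeomorphism $\Phi$ simultaneously flattens $f(U)$ onto the slice $\{(a,\mb 0_{n-m-1})\}$ and translates $f(p)$ to the origin. Once this translation between the ordinary immersion normal form and the double-cone charts is carried out carefully, everything else is a direct citation of Theorem~\ref{thm:first}.
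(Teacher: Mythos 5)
Your proposal is correct and follows exactly the route the paper intends: the paper states this corollary without proof as an immediate consequence of Theorem~\ref{thm:first}, relying on the earlier remark that a usual real analytic manifold is a DC-manifold with no DC-points, so that a real analytic immersion becomes a real analytic DC-immersion. Your elaboration of the chart bookkeeping (embedding flat charts into $\mc C^m_2\setminus\{\mb 0\}$ and adapting the immersion normal form) fills in the details the paper leaves implicit, and your caveat that $n>m$ is needed for the DC-formalism of Definition~\ref{def:XN} is a fair reading of the paper's conventions.
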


Moreover, the following assertion holds:

\begin{Corollary}\label{cor:deep2}
{\it 
Let $X^m$ be a connected real analytic $\op{DC}$-manifold of dimension $m$
and  $f:X^m\to N^n$ a real analytic $\op{DC}$-immersion.
Suppose that $N^n$ is  properly $C^\omega$-embedded in
a $C^\omega$-manifold $L^l$ $(l\ge n)$, and
denote by $\iota:N^n\hookrightarrow L^l$ the inclusion map.
Then $f(X^m)$ is analytically complete in $N^n$
if and only if it is analytically complete in $L^l$.}
\end{Corollary}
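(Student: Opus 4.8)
The plan is to treat the assertion purely as a statement about the subset $S:=f(X^m)$, since analytic completeness (Definition 1.1) is a property of subsets only and does not otherwise involve the DC-structure of $f$ or the connectedness of $X^m$. Thus I only need to compare completeness of $S$ viewed inside $N^n$ with completeness of $S$ viewed inside $\tilde N^k$, and I would prove the two implications separately, expecting them to differ greatly in difficulty.

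For the implication ``complete in $\tilde N^k$ $\Rightarrow$ complete in $N^n$'' I would argue directly. Given a real analytic curve $\Gamma:[0,1]\to N^n$ with $\Gamma([0,\varepsilon))\subset S$, the composition $\iota\circ\Gamma:[0,1]\to\tilde N^k$ is again real analytic because $\iota$ is a real analytic embedding, and it satisfies $(\iota\circ\Gamma)([0,\varepsilon))\subset S$. Completeness of $S$ in $\tilde N^k$ then forces $(\iota\circ\Gamma)([0,1])\subset S$, and since $\iota$ is injective with $S\subset N^n$, this is exactly $\Gamma([0,1])\subset S$. This direction uses nothing beyond analyticity and injectivity of $\iota$.

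The substantive direction is ``complete in $N^n$ $\Rightarrow$ complete in $\tilde N^k$''. Here I would start from a real analytic curve $\tilde\Gamma:[0,1]\to\tilde N^k$ with $\tilde\Gamma([0,\varepsilon))\subset S$ and first confine the whole curve to $N^n$. The key preliminary fact is that, because $\iota$ is a proper embedding and $\tilde N^k$ is locally compact Hausdorff, the image $N^n$ is a closed subset of $\tilde N^k$; this is precisely where properness is used. Set $c:=\sup\{s\in[0,1]\,;\,\tilde\Gamma([0,s))\subset N^n\}$, so $c\ge\varepsilon>0$; closedness of $N^n$ together with continuity of $\tilde\Gamma$ gives $\tilde\Gamma([0,c])\subset N^n$. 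If $c<1$, I would choose around the point $\tilde\Gamma(c)\in N^n$ a neighborhood $W$ in $\tilde N^k$ on which $N^n$ is cut out as the zero set of finitely many real analytic functions $\rho_1,\dots,\rho_{k-n}$ (a submanifold chart). For small $\delta>0$ with $c+\delta\le 1$ we have $\tilde\Gamma((c-\delta,c+\delta))\subset W$, and each $\rho_j\circ\tilde\Gamma$ is a real analytic function on $(c-\delta,c+\delta)$ vanishing on $(c-\delta,c)$; by the identity theorem for real analytic functions it vanishes identically, so $\tilde\Gamma((c-\delta,c+\delta))\subset N^n$, contradicting the maximality of $c$. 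Hence $c=1$ and $\tilde\Gamma([0,1])\subset N^n$.

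Once the curve is confined to $N^n$, I would observe that a real analytic map into $\tilde N^k$ whose image lies in the embedded real analytic submanifold $N^n$ is automatically real analytic as a map into $N^n$ (again work in submanifold charts, where $N^n$ is a coordinate slice). Calling this curve $\Gamma:[0,1]\to N^n$, we have $\Gamma([0,\varepsilon))\subset S$, so analytic completeness of $S$ in $N^n$ yields $\Gamma([0,1])\subset S$, i.e.\ $\tilde\Gamma([0,1])\subset S$, as required. The main obstacle is exactly the confinement step: ruling out that $\tilde\Gamma$ escapes $N^n$ after the initial interval. This is handled by the supremum/identity-theorem continuation above, and it is the closedness of $N^n$---guaranteed by properness---that allows the continuation to start from the limit point $\tilde\Gamma(c)$.
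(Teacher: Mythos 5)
Your proof is correct and follows essentially the same route as the paper: the easy direction is handled by composing with $\iota$, and the hard direction is reduced to showing that $N^n$ itself is analytically complete in $\tilde N^k$ and then applying completeness of $f(X^m)$ inside $N^n$. The only difference is that the paper simply asserts the analytic completeness of the properly embedded $N^n$, whereas you supply the closedness-plus-identity-theorem continuation argument for it (and note explicitly that the confined curve is real analytic as a map into $N^n$), which fills in detail rather than changing the approach.
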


\begin{proof}
The \lq\lq if''  part is obvious.
We suppose that $f(X^m)$ is analytically complete in $N^n$.
We consider a real analytic map
$
\Gamma:[0,1]\to L^l
$
such that $\Gamma ([0,\epsilon) )\subset f(X^m)$
for sufficiently small $\epsilon>0$.
Then 
$
\Gamma ([0,\epsilon)) \subset f(X^m)\subset N^n
$
holds. Since $N^n$ is analytically complete in $L^l$,
we have $\Gamma([0,1])\subset N^n$.
Since $f(X^m)$ is an analytically complete subset of $N^n$,
we also have $\Gamma([0,1])\subset f(X^m)$.
Thus $f(X^m)$ is analytically complete in $L^l$.
\end{proof}

\section{Analytic extensions of catenoids in $S^3_1$}
\label{app:Gcatenoids}

In this section, we define
\lq\lq real analytic space-like CMC-1 $\op{DC}$-immersions\rq\rq,
and we show that each G-catenoid or its appropriate analytic 
extension in the de Sitter 3-space $S^3_1$ 
belongs to this class and
that all of the images of
these maps are analytically complete.

\subsection{Space-like CMC-1 $\op{DC}$-immersions}

We give the following definition.

\begin{Definition}\label{def:CMC-1}
Let $f:X^2\to S^3_1$ be a 
real analytic $\op{DC}$-immersion 
defined on a connected $2$-dimensional real analytic
$\op{DC}$-manifold $X^2$.
Then $f$ is called 
a {\it real analytic space-like CMC-1 $\op{DC}$-immersion}
if there exists an open dense subset $O$ of $X^2\setminus \Sigma$
such that the restriction of $f$
to $O$ is a 
space-like constant mean curvature one (i.e. CMC-1) 
immersion, where $\Sigma$ is the set of $\op{DC}$-points in $X^2$.
\end{Definition}

Similarly, real analytic  ZMC $\op{DC}$-immersion
in $\R^3_1$ can be defined (see Appendix A for details).
We prove the following:

\begin{Proposition}\label{prop:AB-1}
Let $f:X^2\to S^3_1$ 
be a real analytic $\op{DC}$-immersion
defined on a connected $2$-dimensional real analytic
$\op{DC}$-manifold $X^2$.
Suppose that there exists a non-empty open subset
$O$ of $X^2$ such that the restriction
$f|_O:O\to S^3_1$ gives a real analytic
space-like CMC-1 $\op{DC}$-immersion.
Then $f$ is a
real analytic space-like CMC-1 $\op{DC}$-immersion. 
\end{Proposition}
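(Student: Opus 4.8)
The plan is to reduce everything to the open submanifold $M:=X^2\setminus\Sigma$, where $\Sigma$ denotes the set of DC-points of $X^2$, and then to propagate the ``space-like CMC-$1$'' property from the given open set to all of $M$ by analyticity. First I would record two elementary facts. Since $X^2$ is a connected real analytic manifold of dimension $2$ and $\Sigma$ is discrete, the punctured manifold $M$ is again connected; and since $f$ is a real analytic DC-immersion, Definition \ref{def:XN} guarantees that $f|_M$ is a genuine real analytic immersion (every non-DC-point is an immersion point). Shrinking $U$ if necessary, we may assume $U\subset M$ and that $p$ is a non-DC-point. Thus it suffices to show that $f|_M$ is space-like with mean curvature identically $1$, which is exactly the content of Definition \ref{def:CMC-1}.

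The key step is to encode the space-like CMC-$1$ condition as the vanishing of a single real analytic function on $M$, with no denominators, so that the identity theorem for real analytic functions applies. Working in a local coordinate $(u^1,u^2)$ and regarding $f$ as an $\R^4_1$-valued real analytic map via $S^3_1\subset\R^4_1$, set $g_{ij}:=\inner{\partial_i f}{\partial_j f}$ and $W:=g_{11}g_{22}-g_{12}^2$, so that $W>0$ precisely at the space-like immersion points. Using the Hodge-star vector product in $\R^4_1$ I would build a real analytic normal field $\tilde\nu$, orthogonal to $\partial_1 f$, $\partial_2 f$ and to the position vector $f$, with $\inner{\tilde\nu}{\tilde\nu}=-W$; set $\tilde h_{ij}:=\inner{\partial_i\partial_j f}{\tilde\nu}$ and
\[
\tilde H:=g_{22}\tilde h_{11}-2g_{12}\tilde h_{12}+g_{11}\tilde h_{22}.
\]
Both $W$ and $\tilde H$ are real analytic on all of $M$. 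A direct computation shows that on the space-like locus the mean curvature is $H=\tilde H/(2W^{3/2})$, so the hypothesis that $f|_U$ is space-like with $H\equiv 1$ gives $\tilde H=2W^{3/2}$, hence $\tilde H^2=4W^3$, on $U$. Since $M$ is connected and $\tilde H^2-4W^3$ is real analytic, the identity theorem yields $\tilde H^2=4W^3$ on all of $M$.

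From here I would read off the geometric consequences. Because the left-hand side is a square, $\tilde H^2=4W^3$ forces $W\ge 0$ everywhere; in particular $f|_M$ is nowhere time-like. On the open set $\{W>0\}$ the function $H=\tilde H/(2W^{3/2})$ is well-defined and real analytic and satisfies $H^2=1$, so on the connected component of $\{W>0\}$ containing $U$ we get $H\equiv 1$ by continuity (and on the remaining components $H\equiv\pm1$, which is CMC-$1$ up to the orientation of the unit normal).

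The main obstacle is the last step: ruling out the degenerate locus $\{W=0\}$, so that $W>0$ on all of $M$ and $f|_M$ is genuinely space-like. The difficulty is intrinsic, since the identity theorem propagates the equation $\tilde H^2=4W^3$ but not the strict inequality $W>0$. To overcome it I would analyze a hypothetical degenerate immersion point $q_0\in\{W=0\}$ locally: there the tangent plane is null, the analytic normal $\tilde\nu$ becomes null and aligned with the null tangent direction, and $\tilde H(q_0)=0$. I would then expand the relation $\tilde H^2=4W^3$ in coordinates adapted to the null direction and show that it is incompatible with $f$ being a nondegenerate immersion near $q_0$ with $H\equiv 1$ on the adjacent space-like region. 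Equivalently, one may invoke the Weierstrass-type representation for space-like CMC-$1$ surfaces in $S^3_1$, under which the induced metric is nondegenerate at every immersion point, so that the space-like locus cannot have an interior boundary point inside $M$. Either way $\{W=0\}=\emptyset$, whence $f|_M$ is a space-like CMC-$1$ immersion and $f$ is a real analytic space-like CMC-$1$ DC-immersion.
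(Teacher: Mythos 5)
Your argument up to the identity $\tilde H^2=4W^3$ on $M=X^2\setminus\Sigma$ is essentially the paper's: the paper works with the functions $A_W,B_W$ of \cite{UY3} (your $\tilde H$ and $W$), propagates the relation $A^2=4B^3$ by real analyticity along a chain of charts joining $U$ to an arbitrary point of $M$, and concludes $B\ge 0$. Your direct observation that $4W^3=\tilde H^2\ge 0$ already forces $W\ge 0$ is in fact cleaner than the paper's appeal to \cite{HKKUY}. (One caveat you share with the paper: deleting a DC-point locally disconnects a double cone, so connectedness of $X^2\setminus\Sigma$ is not automatic from connectedness of $X^2$; it does hold for the examples the proposition is applied to.)

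The final step of your proposal, however, is not a fillable gap but an attempt to prove something false. You try to show $\{W=0\}=\emptyset$, i.e.\ that $f$ is space-like at every non-DC-point. The proposition is applied in Section 4 precisely to extensions whose images contain light-like geodesic segments: for instance $\tilde f_{\op{SE}}^\mu(\mc X_{\op{SE}}^\mu)=f_{\op{SE}}^\mu(\R^2)\cup\mc L\cup\check f_{\op{SE}}^\mu(\R^2)$ as in \eqref{eq:SE-E}, where $\mc L$ is a union of light-like lines whose preimages are non-DC immersion points at which the induced metric degenerates; Remark \ref{rmk:add}, quoting \cite[Theorem D]{UY3}, states explicitly that such light-like points occur and propagate along light-like geodesics. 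Hence no local expansion at a null point, and no appeal to a Weierstrass-type representation, can exclude $W=0$: real analytic immersions satisfying $\tilde H^2=4W^3$, $W\ge0$, with nonempty degenerate locus are the main objects of the paper. The conclusion actually established (and the way Definition \ref{def:CMC-1} must be read, cf.\ Lemma \ref{lemma:AB-1}) is the weaker one: $f$ is nowhere time-like and is a space-like CMC-1 immersion on the open dense subset $\{W>0\}$. If you stop at ``$W\ge 0$ everywhere and $H\equiv 1$ on the space-like locus,'' your proof is complete and agrees with the paper's.
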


\begin{proof}
We let $\Sigma$ be the set of $\op{DC}$-points in $X^2$.
We fix a point $p\in X^2\setminus \Sigma$
arbitrarily.
Since $X^2$ is connected,
there exists a continuous map
$\gamma:[0,1]\to X^2$ such that
$\gamma(0)\in O$ and $\gamma(1)=p$.
Since $\gamma([0,1])$ is compact,
we can take a partition
$
0=t_0<t_1<\cdots<t_k=1
$
and a family of inverse $\op{DC}$-coordinate systems $\{(U_i,\Phi_i)\}_{i=0}^k$
such that 
\begin{itemize}
\item $U_i$ ($i=0,\ldots,k$) are connected
and $\gamma(t_i)\in \Phi_i(U_i)$,
\item $\Phi_0(U_0)\subset O$, and
\item $\Phi_{i-1}(U_{i-1})\cap \Phi_i(U_i)$ 
is non-empty for $i=1,\dots,k$.
\end{itemize}
We know that $f|_{\Phi_0(U_0)}$
is 
a 
space-like CMC-1 DC-immersion 
on $\Phi_{0}(U_{0})$.
If the assertion fails, then there exists a number $j(\ge 1)$
so that $f|_{\Phi_{j-1}(U_{j-1})}$ is 
a space-like CMC-1 DC-immersion, but 
$f|_{\Phi_{j}(U_{j})}$ is not a space-like CMC-1 DC-immersion.
Since $U_{j-1}$ and $U_{j}$ are open submanifolds
of  $S^{1}\times \R$, we may assume that
they have local coordinate systems. 
Let $A_{i}$ and  $B_i$ ($i=0,\ldots n$)
be the two particular analytic functions  
defined on $U_i$ in \cite[(2.3)]{HKKUY}
(in \cite{HKKUY}, $\alpha$, $\beta$ correspond 
to $A_i$ and $B_i$ in our paper),
which have the property that the mean curvature $H$ of 
$f_i:=f\circ \Phi_i$ satisfies 
\begin{equation}\label{eq:H}
H=\pm A_i/(2|B_i|^{3/2}) \qquad (i=0,\ldots n)
\end{equation}
on an open dense subset of $U_i$, and
$B_i$ is positive (resp. zero, negative) at a
space-like (resp. light-like, time-like) point.
We take a non-empty open subset $W$ of $X^2$
such that $W\subset \Phi_{j-1}(U_{j-1})\cap \Phi_{j}(U_{j})$.
Since $\Sigma$ is a discrete subset of $X^2$,
we may assume that $W$ does not contain any $\op{DC}$-points.
Since $f$ is a real analytic space-like DC-immersion,
by taking $W$ sufficiently small, we may assume that
$f_{j-1}$ gives a space-like CMC-1  immersion on $\Phi^{-1}_{j-1}(W)$.
In particular,
$B_{j-1}$ is positive-valued on $\Phi_{j-1}^{-1}(W)$.
So, \eqref{eq:H} implies
\begin{equation}\label{eq:H2}
A_{j-1}(x)^2=4B_{j-1}(x)^3\qquad (x\in \Phi_{j-1}^{-1}(W)).
\end{equation}
Since the property that \lq\lq space-like with $|H|=1$\rq\rq\
does not depend on the choice of local coordinate system,
we have
\begin{equation}\label{eq:H3}
A_{j}(x)^2=4B_{j}(x)^3\qquad (x\in \Phi_{j}^{-1}(W)).
\end{equation}
Since $\Phi_{j}^{-1}(W)\subset U_j$
and $U_j$ is connected,
the real analyticity of $f_j$ 
implies that $A_{j}^2=4B_{j}^3$ holds on $U_j$.
As a consequence, $B_{j}\ge 0$ holds on $U_{j}$
and
$f$ gives a space-like CMC-1 immersion 
on an open dense subset of $\Phi_j(U_j)$, a contradiction.
\end{proof}

\begin{Remark}\label{rmk:add}
As a consequence of \cite[Theorem D]{UY3},  
if a real analytic CMC-1
immersion $f:(U;u,v)\to S^3_1$ 
admits a light-like point $p\in U$, then
there exists a regular map $\sigma:(-\epsilon,\epsilon)\to U$
($\epsilon>0$) such that 
\begin{itemize}
\item $\sigma(0)=p$,
\item $\sigma(t)$ ($|t|<\epsilon$) are light-like points of $f$, and
\item  $f\circ \sigma$ is a light-like geodesic segment in $S^3_1$.
\end{itemize}
This is the reason why the following analytic extensions of
CMC-1 catenoids in $S^3_1$ often contain light-like geodesic segments.
\end{Remark}

\subsection{The classification of G-catenoids}

A \emph{geometric catenoid} (or a \emph{G-catenoid} for short)
is a space-like CMC-1 face in de Sitter $3$-space $S^3_1$ 
(cf. \cite{F} and \cite{FRUYY})
which is invariant under the action of a one-parameter subgroup $G$ of isometries 
of $S^3_1$ with a common fixed point.
Such a subgroup $G$  fixes one or two geodesics, which are called the \emph{axes}
of the subgroup $G$.
We say that a G-catenoid is of \emph{type T} (resp.\ \emph{L}, \emph{S})
if the corresponding axes are time-like (resp.\ light-like, space-like) geodesics.
The analytic part of a G-catenoid can be considered as a CMC-1 face
in the sense of \cite{FKKRUY2}, that is, it can be expressed as 
\[
    f = (x_0,x_1,x_2,x_3),\qquad \begin{pmatrix}
				  x_0+x_3 & x_1+\imag x_2 \\
				  x_1-\imag x_2 & x_0-x_3
				 \end{pmatrix} = F\begin{pmatrix}
						   1 & \hphantom{-}0 \\
						   0 & -1
						  \end{pmatrix}
            					  F^*,
\]
where $F$ is a holomorphic immersion into $\SL(2,\C)$
defined on a certain Riemann surface.
Such an $F$ is called the \emph{holomorphic lift} of the surface
(cf.\ \cite{FKKRUY2} for details).
G-catenoids were completely classified 
by the last author \cite{Y}
as in Table~\ref{tab:G-catenoids}.
We remark that a surface of type LE (resp.\ LH)
 with holomorphic lift $F_L(\alpha,z)$ is congruent
 to $F_L(1,z)$ (resp.\ $F_L(-1,z)$).

\begin{table}[h]
 \begin{center}
  \begin{tabular}{|c||c|c|c|}
   \hline
   axis   & E-type & P-type & H-type\\ \hline \hline
   \begin{minipage}[t]{1.6cm}
    time-like \\
    (type T)
   \end{minipage}
    &
   (TE)
   \begin{minipage}[t]{2.3cm}
    $F_T\left(\frac{1-\mu^2\mathstrut}{2},z\right)$\newline
    \hfill $\bigl(\mu\in\R_+\setminus\{1\}\bigr)$
   \end{minipage}
    & (TP) $F_T(0,z)$ 
    & (TH) 
   \begin{minipage}[t]{2.3cm}
    $F_T\left(\frac{1-\mu^2\mathstrut}{2},z\right)$\newline
    \hfill $\bigl(\mu\in\imag\R_+\bigr)$
   \end{minipage}\\
   \hline
   \begin{minipage}[t]{1.6cm}
    light-like \\
    (type L)
   \end{minipage}&
    (LE) 
   \begin{minipage}[t]{2.3cm}
    $F_L\bigl(\alpha, z\bigr)$\newline
    \hfill $\bigl(\alpha>0\bigr)$
   \end{minipage}& ---
    & (LH) 
   \begin{minipage}[t]{2.3cm}
    $F_L\bigl(\alpha, z\bigr)$\newline
    \hfill $\bigl(\alpha<0\bigr)$
   \end{minipage}
   \\ \hline
   \begin{minipage}[t]{1.6cm}
    space-like \\
    (type S)
   \end{minipage}
    &
   (SE)
   \begin{minipage}[t]{2.3cm}
    $F_S\left(\frac{1-\mu^2\mathstrut}{2},z\right)$\newline
    \hfill $\bigl(\mu\in\R_+\setminus \{1\}\bigr)$
   \end{minipage}
    & (SP) $F_S(0,z)$ 
    & (SH) 
   \begin{minipage}[t]{2.3cm}
    $F_S\left(\frac{1-\mu^2\mathstrut}{2},z\right)$\newline
    \hfill $\bigl(\mu\in\imag\R_+\bigr)$
   \end{minipage}\\
   \hline
  \end{tabular}
 \end{center}
 \caption{The classification of G-catenoids in \cite{Y}.
 $F_T$, $F_L$ and $F_S$ denote the holomorphic lifts
 of  surfaces, which are defined in 
 \cite[Lemma 4.2, Section 4]{Y}.}\label{tab:G-catenoids}
\end{table}

Since the monodromy matrix of the secondary Gauss map of a G-catenoid
takes values in $\op{SU}(1,1)$
whose trace is real and less than (resp. greater than, equal to) two,
we call such a matrix elliptic (resp. hyperbolic, parabolic).
So we call $f^{\mu}_{\mathrm{TE}}$ a G-catenoid of type {\rm TE},
since the monodromy of its secondary  Gauss map  is elliptic.

From now on, we show that each G-catenoid 
or
its suitable analytic extension is analytically complete in $S^3_1$.
Since $S^3_1$ is a properly embedded real analytic submanifold
of $\R^4$, all those surfaces are also analytically complete in $\R^4$
(cf. Corollary \ref{cor:deep2}).

\subsection{G-catenoids of type T}

As mentioned above,
there are three subclasses of G-catenoids of type T.
In this subsection, we show that the images of all of 
them are analytically complete. 

\subsubsection{G-catenoids of type TE}
We set $T^1:=\R/2\pi \Z$.
For each 
$\mu\in \R_+ \setminus \{1\}$,
we consider a real analytic map
$$
   f_{\op{TE}}^\mu 
\colon{} \R\times T^1 \ni
(s,\theta) \mapsto (x_0(s),x_1(s,\theta),x_2(s,\theta),x_3(s))\in 
S^3_1,
$$
which gives a G-catenoid of type {\rm TE}
, 
where
\begin{align*}
f^{\mu}_{\mathrm{TE}}(s, \theta) =
 \begin{pmatrix}
  1 & 0 & 0 & 0 \\ 0 & \cos \theta & -\sin \theta & 0 \\ 
 0 & \sin \theta & \cos \theta & 0 \\ 0 & 0 & 0 & 1
 \end{pmatrix}
\Gamma^{\mu}_{\mathrm{TE}}(s),\quad
\Gamma^{\mu}_{\mathrm{TE}}(s):=\begin{pmatrix}
 x_0(s) \\ 
\frac{1- \mu^2}{2 \mu} \sinh \mu s \\ 
0 \\ 
x_3(s) 
\end{pmatrix}, 
\end{align*}
and
\begin{equation}\label{eq:x0x3-TE} 
\begin{aligned}
x_0(s)&=\sinh s \cosh \mu s-\frac{\left(\mu^2+1\right) \cosh s \sinh \mu s}{2 \mu},\\
x_3(s)&=\cosh s\cosh \mu s-\frac{\left(\mu^2+1\right) \sinh s \sinh \mu s}{2 \mu}.
 \end{aligned}
\end{equation}
Here $\Gamma_{\mathrm{TE}}^\mu(s)$ ($s\in \R$) is a
profile curve of $f^{\mu}_{\mathrm{TE}}$
lying on $S^2_1 = S^3_1 \cap \{ x_2=0 \}$. 
The axes of $f_{\op{TE}}^\mu$ 
are $\{(\sinh t,0,0,\pm\cosh t)\;;\;t\in\R\}$.
The singular point set of $f_{\op{TE}}^\mu$ 
is  $\{(0,\theta)\in \R\times T^1\}$,
whose image consists of one point  $\{(0,0,0,1)\}$
(see Figure \ref{fig:typeTE}).

\begin{figure}[hbt]%
 \begin{center}
       \includegraphics[width=3.5cm]{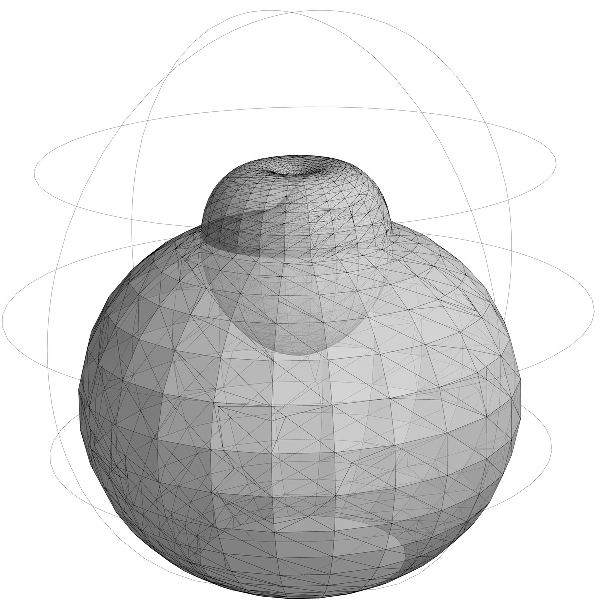} \hskip 2cm
       \includegraphics[width=3.5cm]{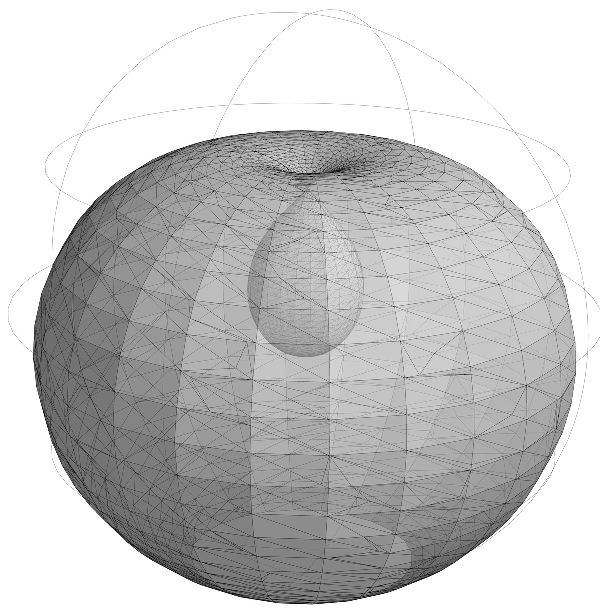}  
\end{center}
  \caption{G-catenoids of type TE with $0<\mu<1$ (left) and $\mu>1$ (right)
in the stereographic hollow ball model (cf. \cite[(1)]{FKKRUY3}
)
}%
\label{fig:typeTE}
\end{figure}

The following assertion can be checked easily:

\begin{Lemma}\label{lem:5-1}
For each 
$\mu\in \R_+ \setminus \{1\}$,
the plane curve $s\mapsto (x_0(s),x_3(s))$ is a regular curve on $\R$.
\end{Lemma}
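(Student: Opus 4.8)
The plan is to show that the curve $s\mapsto(x_0(s),x_3(s))$ never has a vanishing velocity vector, i.e. that $\bigl(x_0'(s),x_3'(s)\bigr)\neq(0,0)$ for all $s\in\R$. First I would differentiate the explicit formulas in \eqref{eq:x0x3-TE}. Writing $c=\cosh s$, $s_h=\sinh s$, $C=\cosh\mu s$, $S=\sinh\mu s$ to keep things readable, one finds
\[
x_0'(s)=\cosh s\cosh\mu s+\mu\sinh s\sinh\mu s
-\frac{\mu^2+1}{2\mu}\bigl(\sinh s\sinh\mu s+\mu\cosh s\cosh\mu s\bigr),
\]
\[
x_3'(s)=\sinh s\cosh\mu s+\mu\cosh s\sinh\mu s
-\frac{\mu^2+1}{2\mu}\bigl(\cosh s\sinh\mu s+\mu\sinh s\cosh\mu s\bigr).
\]
The idea is to collect the coefficients of the four independent products $\cosh s\cosh\mu s$, $\sinh s\sinh\mu s$, $\sinh s\cosh\mu s$, $\cosh s\sinh\mu s$. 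A short computation shows the coefficient of $\cosh s\cosh\mu s$ in $x_0'$ equals $1-(\mu^2+1)/2=(1-\mu^2)/2$, and similarly for the others, so that everything is controlled by the single factor $(1-\mu^2)$, which is nonzero precisely because $\mu\neq 1$.

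The cleanest route is to compute the quantity $\bigl(x_0'(s)\bigr)^2-\bigl(x_3'(s)\bigr)^2$ or the combination $x_3'(s)x_0'(s)$ and simplify; I expect the hyperbolic angle-addition identities to collapse the expression into something manifestly nonzero. Concretely, I would verify that
\[
x_3'(s)^2-x_0'(s)^2=\left(\frac{1-\mu^2}{2\mu}\right)^2\bigl(\text{something strictly positive}\bigr),
\]
or alternatively show directly that $x_0'$ and $x_3'$ cannot vanish simultaneously by exhibiting a nonvanishing linear combination such as $\cosh s\,x_3'(s)-\sinh s\,x_0'(s)$, which should reduce to a constant multiple of $\sinh\mu s\cdot\tfrac{1-\mu^2}{2\mu}$ plus lower-order terms and hence be controllable. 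Because the profile curve lies on the de Sitter plane $S^2_1$, one also has the constraint $-x_0(s)^2+x_3(s)^2+\bigl(\tfrac{1-\mu^2}{2\mu}\sinh\mu s\bigr)^2=1$; differentiating this constraint gives a linear relation among $x_0',x_3'$ and the known middle coordinate, which provides an independent check and may make the nonvanishing immediate.

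The main obstacle I anticipate is purely algebraic bookkeeping: the derivatives are sums of eight hyperbolic terms, and a naive expansion risks sign errors. I would therefore organize the computation by treating $\cosh s\cosh\mu s,\ \sinh s\sinh\mu s,\ \sinh s\cosh\mu s,\ \cosh s\sinh\mu s$ as a basis and tracking only the four scalar coefficients, rather than expanding into exponentials. The key structural fact that makes the lemma true is that the factor $1-\mu^2$ appears as an overall obstruction to degeneracy, so the regularity fails only in the excluded limiting case $\mu=1$ (which corresponds to a different, degenerate profile). Once the coefficient analysis confirms that $(x_0',x_3')$ vanishes only if $1-\mu^2=0$, the lemma follows immediately for all $\mu\in\R_+\setminus\{1\}$.
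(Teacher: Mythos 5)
Your overall strategy --- differentiate and show that the velocity $(x_0'(s),x_3'(s))$ never vanishes --- is exactly the paper's, and your formulas for $x_0'$ and $x_3'$ are correct; collecting coefficients as you suggest gives
\[
x_0'(s)=\frac{\mu^2-1}{2\mu}\bigl(-\mu\cosh s\cosh\mu s+\sinh s\sinh\mu s\bigr),\qquad
x_3'(s)=\frac{\mu^2-1}{2\mu}\bigl(-\mu\sinh s\cosh\mu s+\cosh s\sinh\mu s\bigr).
\]
The genuine gap is that extracting the overall factor $\mu^2-1$ is not the whole story: the two bracketed expressions are \emph{not} individually nonvanishing (for $0<\mu<1$ the first one vanishes wherever $\tanh s\,\tanh\mu s=\mu$), so one must still prove they never vanish \emph{simultaneously}, and your proposal never actually does this --- the assertion that regularity is ``controlled by the single factor $1-\mu^2$'' is a restatement of the claim, not an argument. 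Moreover, both concrete completions you offer fail as stated. One computes
\[
x_3'(s)^2-x_0'(s)^2=\Bigl(\frac{\mu^2-1}{2\mu}\Bigr)^2\bigl(\sinh^2\mu s-\mu^2\cosh^2\mu s\bigr),
\]
which is negative at $s=0$ and positive for large $|s|$ when $0<\mu<1$, hence changes sign and is not ``manifestly nonzero''. And your proposed combination gives $\cosh s\,x_3'(s)-\sinh s\,x_0'(s)=\frac{\mu^2-1}{2\mu}\sinh\mu s$, exactly the multiple of $\sinh\mu s$ you predict --- but this vanishes at $s=0$, so it does not exclude a critical point there.

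The repair is one line and is essentially what the paper does. The \emph{other} combination satisfies
\[
\cosh s\,x_0'(s)-\sinh s\,x_3'(s)=-\frac{\mu^2-1}{2}\cosh\mu s,
\]
which is nonzero for every $s$ since $\mu\neq 1$ and $\cosh\mu s\ge 1$; hence $(x_0'(s),x_3'(s))\neq(0,0)$ everywhere. Equivalently, in the paper's formulation, the velocity factors as the nonzero scalar $\frac{\mu^2-1}{2\mu}$ times the invertible matrix $\begin{pmatrix}\cosh s&\sinh s\\ \sinh s&\cosh s\end{pmatrix}$ (determinant $1$) applied to the vector $(-\mu\cosh\mu s,\sinh\mu s)$, whose first entry never vanishes. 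Your alternative idea of differentiating the constraint $-x_0^2+x_1^2+x_3^2=1$ would also work --- it localizes any critical point to $\sinh\mu s=0$, i.e.\ $s=0$, where $x_0'(0)=\tfrac{1-\mu^2}{2}\neq0$ --- but it, too, needs to be carried out rather than cited as a check.
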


\begin{proof}
In fact, we have
$$
\pmt{x_0'(s) \\ x'_3(s)}=
\frac{\mu^2-1}{2\mu}
\pmt{ \cosh s & \sinh s \\
 \sinh s &\cosh s 
}
\pmt{
 -\mu \cosh \mu s \\
 \sinh \mu s
},
$$
which implies the assertion.
\end{proof}

\begin{Proposition}
The map
$f_{\op{TE}}^\mu$ is a 
real analytic space-like CMC-1 $\op{DC}$-immersion,
and its image is analytically complete.
\end{Proposition}

Note that $f_{\op{TE}}^\mu$ is a W-catenoid with 
elliptic monodromy as in \cite{FKKRUY2}.

\begin{proof}
For each $\mu\in \R_+\setminus \{1\}$,
we set
$$
\mc X_{\op{TE}}^\mu:=
\left
\{(\xi,\eta,s)\in \R^3\,;\, 
\xi^2+\eta^2=\left(\frac{1-\mu^2}{2\mu}\right)^2\sinh^2
\mu s
\right\}.
$$
It can be easily checked that $\mc X_{\op{TE}}^\mu\setminus \{\mb 0\}$
is a real analytic submanifold of $\R^3$, and so
it is an admissible subset of $\R^3$.
Since the map
$$
\R^3\ni(x,y,z)\mapsto \left(x,y,\frac1\mu\sinh^{-1}\left(\frac{2\mu z}{1-\mu^2}\right)\right)\in \R^3
$$
gives a surjective extended parametrization of
$\mc X_{\op{TE}}^\mu$, 
by Proposition \ref{prop:Xstr},
$\mc X_{\op{TE}}^\mu$
has the structure of 
a $\op{DC}$-submanifold of $\R^3$ 
so that $\mb 0\in \mc X_{\op{TE}}^\mu$ 
is a $\op{DC}$-point. 
Moreover,
the inclusion map
 $\iota:\mc X_{\op{TE}}^\mu\hookrightarrow \R^3$
is a $\op{DC}$-immersion.
By Lemma \ref{lem:5-1},
the map
$
\Psi:\R^3 \ni (\xi,\eta,s)
\mapsto (x_0(s),\xi,\eta,x_3(s))\in S^3_1
$
is an immersion, and so
the composition
$\tilde f_{\op{TE}}:=\Psi\circ \iota$
is also a $\op{DC}$-immersion
(cf. Corollary \ref{cor:Xm}).
Since
$$
f_{\op{TE}}^\mu(s,\theta)=\tilde f_{\op{TE}}^\mu(x_1(s,\theta),x_2(s,\theta),s),
$$
the image of $\tilde f_{\op{TE}}^\mu$ coincides with that of $f_{\op{TE}}^\mu$.
The map $\tilde f_{\op{TE}}^\mu$ is 
a 
real analytic space-like CMC-1 $\op{DC}$-immersion.
It can be easily checked that $\tilde f_{\op{TE}}^\mu$ is a proper map.
So, Theorem~\ref{thm:first}
yields that
the image of $\tilde f_{\op{TE}}^\mu$ is analytically complete.
\end{proof}

\subsubsection{G-catenoids of type TP}
A G-catenoid of type {\rm TP}  
can be expressed as
 \begin{align*}
   f_{\op{TP}}
 \colon{} \R\times T^1\ni
(s,\theta) \mapsto (x_0(s),x_1(s,\theta),x_2(s,\theta),x_3(s))\in S^3_1,
 \end{align*}
where
\begin{align*}
f_{\op{TP}}(s, \theta)=
 \begin{pmatrix}
  1 & 0 & 0 & 0 \\ 0 & \cos \theta & -\sin \theta & 0 \\ 
 0 & \sin \theta & \cos \theta & 0 \\ 0 & 0 & 0 & 1
 \end{pmatrix}
\Gamma_{\op{TP}}(s), \quad 
\Gamma_{\op{TP}}(s) :=
\begin{pmatrix}
x_0(s)  \\ 
s/2 \\ 
0 \\ 
x_3(s)  
\end{pmatrix}, 
\end{align*}
and
\begin{equation}\label{eq:x0x3-TP}
 x_0(s)=\sinh s - \frac{s}{2} \cosh s,\qquad
x_3(s)=\cosh s - \frac{s}{2} \sinh s.  
\end{equation}
Since the monodromy matrix of the secondary 
Gauss map of $f_{\op{TP}}$ is parabolic,
we call $f_{\op{TP}}$ a 
G-catenoid of type {\rm TP}.
Here
$\Gamma_{\op{TP}}(s)$ ($s\in \R$) is the profile curve
of $f_{\op{TP}}$ lying on $S^2_1 = S^3_1 \cap \{ x_2=0 \}$.
The axes of $f_{\op{TP}}$ are the same as the axes of $f_{\op{TE}}^\mu$.
The singular point set of  $f_{\op{TP}}$ is  
$	    \{(0,\theta)\in \R\times T^1\}$,
whose image consists of one point $\{(0,0,0,1)\}$.
Note that $f_{\op{TP}}$ is the limit of $f_{\op{TE}}^\mu$ as $\mu \to 0$. 
We set
$$
\mc X_{\op{TP}}:=
\left\{
(\xi,\eta,s)\in \R^3\,;\, \xi^2+\eta^2=\frac{s^2}{4}
\right\}.
$$
Like as in the case of $\mc X_{\op{TE}}$,
the subset $\mc X_{\op{TP}}$ has the structure of a $\op{DC}$-submanifold
of $\R^3$ such that $(0,0,0)$ is a $\op{DC}$-point.
It can be easily checked that 
$\R\ni s\mapsto (x_0(s),x_3(s))\in \R^2$ is a 
regular curve.
So 
$$
\Psi:\R^3\ni (\xi,\eta,s)
\mapsto (x_0(s),\xi,\eta,x_3(s))\in S^3_1
$$
is an immersion.
If we denote the inclusion map by
$\iota:\mc X_{\op{TP}}\hookrightarrow \R^3$,
then the composition
$\tilde f_{\op{TP}}:=\Psi\circ \iota$
is a $\op{DC}$-immersion.
Since
$
f_{\op{TP}}(s,\theta)=\tilde f_{\op{TP}}(x_1(s,\theta),x_2(s,\theta),s),
$
the image of $\tilde f_{\op{TP}}$ coincides with that 
of $f_{\op{TP}}$. 
On the other hand, it is easily checked that
$\tilde f_{\op{TP}}$ is a proper map.
Applying
Theorem \ref{thm:first}
yields the following:

\begin{Proposition}
The map $f_{\op{TP}}$ is a 
real analytic space-like CMC-1 $\op{DC}$-immersion, and
its image is analytically complete.
\end{Proposition}

Note that $f_{\op{TP}}$ is
a W-catenoid with parabolic monodromy as in \cite{FKKRUY2}.

\subsubsection{G-catenoids of type TH}
For each 
$\nu\in \R_+ \setminus \{1\}$,
we consider the following real analytic map
\begin{equation*}
   f_{\op{TH}}^\nu  \colon{} \R\times T^1
\ni (s,\theta) \mapsto (x_0(s),x_1(s, \theta),x_2(s, \theta),x_3(s))
 \in S^3_1
\end{equation*}
which gives a G-catenoid of type TH, where
\begin{align*}
f^{\nu}_{\op{TH}}(s, \theta):=
 \begin{pmatrix}
  1 & 0 & 0 & 0 \\ 0 & \cos \theta & -\sin \theta & 0 \\ 
 0 & \sin \theta & \cos \theta & 0 \\ 0 & 0 & 0 & 1
 \end{pmatrix}
\Gamma^{\nu}_{\op{TH}}(s), \quad 
\Gamma^{\nu}_{\op{TH}}(s) :=
\begin{pmatrix}
x_0(s)  \\ 
\frac{\nu^2+1}{2 \nu} \cos(\nu s) \\ 
0 \\ 
x_3(s)  
\end{pmatrix},
\end{align*}
and
\begin{equation}\label{eq:x0x3-TH}
\begin{aligned}
x_0 (s) &:=\sin (\nu s) \sinh s
-\frac{\left(\nu^2-1\right) \cos(\nu s) \cosh s}{2 \nu}, \\
x_3 (s) &:=\sin (\nu s) \cosh s
-\frac{\left(\nu^2-1\right) \cos(\nu s) \sinh s}{2 \nu}.
 \end{aligned}
\end{equation}
Since the monodromy matrix of the secondary Gauss map of 
$f^{\mu}_{\mathrm{TH}}$ is hyperbolic,
we call $f^{\mu}_{\mathrm{TH}}$ a G-catenoid of type {\rm TH}.
Here $\Gamma^\nu_{\op{TH}}(s)$ ($s\in \R$) is a profile curve
of $f^\nu_{\op{TH}}$
lying on $S^2_1 = S^3_1 \cap \{ x_2=0 \}$.
The axes of $f_{\op{TH}}^\nu$ are the same as the axes of $f_{\op{TE}}^\mu$.

The singular point set of  $f_{\op{TH}}^\nu$
is given by
       \[
	    \Sigma:=\bigcup_{k\in\Z}\Sigma_k,\qquad
            \Sigma_k := \left\{
                       \left(\left(\frac{1}{2}+k\right)\frac{\pi}{\nu},\theta\right)\,;
                         \theta\in T^1
                       \right\},
       \]
       and the image of each connected component $\Sigma_k$ consists
       of one point
       \[
     \left(
 (-1)^{k+1}\sinh\left(\left(\frac{1}{2}+k\right)\pi\right),
0, 0,
	          (-1)^{k+1}\cosh\left(\left(\frac{1}{2}+k\right)
\pi\right)\right).
       \]
Unlike the case of $f_{\op{TE}}^\mu$ and $f_{\op{TP}}$,
the map $f_{\op{TH}}^\nu\colon{}\R\times T^1\to S^3_1$ is not
a proper map. In fact, the image of the map accumulates to the set
       \[
	  \mc L = \mc L_+\cup \mc L_-, \qquad 
          \mc L_\pm := \{(x_0,x_1,x_2,x_3)\in S^3_1\,;\, x_0\mp x_3=0\}.
       \]
Figures for the image of $f_{\op{TH}}^\nu$ are given
in \cite[Fig. 3 (page 35)]{FKKRUY2}.
Like as in 
Example 
\ref{ex:H0},
the set
$$
\mc X_{\op{TH}}^\nu:=
\left\{
(\xi,\eta,s)\in \R^3\,;\, \xi^2+\eta^2=\frac{(1+\nu^2)^2\cos^2(\nu s)}{4\nu^2}
\right\}
$$
has the structure of a $\op{DC}$-submanifold 
of $\R^3$ whose $\op{DC}$-points are
 $(0,0,(\pi/2+\pi k)/\nu)$ 
($k\in \Z$).
Using the functions \eqref{eq:x0x3-TH}, 
we define  
$$
\Psi:\R^3 \ni (\xi,\eta,s)
\mapsto (x_0(s),\xi,\eta,x_3(s))\in S^3_1,
$$
which is a real analytic diffeomorphism into $S^3_1$ by the following lemma:

\begin{Lemma}\label{lem:5-2}
For each $\nu\in\R_+\setminus\{1\}$, 
the plane curve $s\mapsto (x_0(s),x_3(s))$ is a regular curve on $\R$.
\end{Lemma}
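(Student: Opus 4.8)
The plan is to follow verbatim the strategy already used for Lemma~\ref{lem:5-1}: differentiate the two coordinate functions in \eqref{eq:x0x3-TH}, factor the velocity vector $(x_0'(s),x_3'(s))$ as a nonzero scalar times a $2\times2$ matrix applied to a nowhere-vanishing vector, and then reduce regularity to the invertibility of that matrix. Recall that ``regular'' here means $(x_0'(s),x_3'(s))\neq(0,0)$ for every $s\in\R$.

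First I would compute the two derivatives directly. Writing the hyperbolic factors as $\cosh(s/\nu)$ and $\sinh(s/\nu)$, the coefficient $\frac{\nu^2-1}{2\nu}$ appearing in \eqref{eq:x0x3-TH} combines with the $\frac1\nu$ produced by the chain rule so that the cross terms collect into the common prefactor $\frac{\nu^2+1}{2\nu}$. The resulting identity is the factorization
$$
\pmt{x_0'(s) \\ x_3'(s)}
=\frac{\nu^2+1}{2\nu}
\pmt{\cosh(s/\nu) & \frac1\nu\sinh(s/\nu) \\
     \sinh(s/\nu) & \frac1\nu\cosh(s/\nu)}
\pmt{\sin s \\ \cos s},
$$
in exact parallel with the matrix expression obtained for Lemma~\ref{lem:5-1}.

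Granting this identity, the conclusion is immediate. The scalar $\frac{\nu^2+1}{2\nu}$ is nonzero, and the displayed $2\times2$ matrix has determinant $\frac1\nu\bigl(\cosh^2(s/\nu)-\sinh^2(s/\nu)\bigr)=\frac1\nu\neq0$, so it is invertible for every $s$. Since the vector $(\sin s,\cos s)$ never vanishes, the right-hand side is nonzero for all $s\in\R$, whence $(x_0'(s),x_3'(s))\neq(0,0)$ and the plane curve is regular.

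The only point requiring care is the bookkeeping in the differentiation step, namely checking that the four cross terms collapse precisely into the clean matrix above; this is routine. There is no genuine obstacle, because the hyperbolic Pythagorean identity $\cosh^2-\sinh^2=1$ forces the matrix determinant to be the constant $\frac1\nu$, so the matrix can never degenerate—mirroring the role of the factor $\mu\cosh^2 s-\mu\sinh^2 s=\mu$ in the proof of Lemma~\ref{lem:5-1}.
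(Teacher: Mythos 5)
Your proposal is correct and follows essentially the same route as the paper: the paper writes $\pmt{x_0'(s)\\ x_3'(s)}=\frac{\nu^2+1}{2\nu^2}\pmt{\sinh(s/\nu) & \cosh(s/\nu)\\ \cosh(s/\nu) & \sinh(s/\nu)}\pmt{\cos s\\ \nu\sin s}$, which is the same factorization as yours up to where the factor $\frac1\nu$ is absorbed, and the conclusion follows in both cases from the constant nonzero determinant together with the nonvanishing of the trigonometric vector.
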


\begin{proof}
In fact, the assertion follows from the following identity
$$
\pmt{x_0'(s) \\ x'_3(s)}=
\frac{\nu^2+1}{2\nu}
\pmt{ \sinh s & \cosh s \\
 \cosh s  & \sinh s 
}
\pmt{
 \cos(\nu s) \\
 \nu\sin(\nu s)
}.
$$
\end{proof}
By this lemma,
$\tilde f_{\op{TH}}^\nu:=\Psi|_{\mc X_{\op{TH}}^\nu}$
is a $\op{DC}$-immersion. 
Since
$$
f_{\op{TH}}^\nu(s,\theta)
=\tilde f_{\op{TH}}^\nu(x_1(s,\theta),x_2(s,\theta),s),
$$
the image of $\tilde f_{\op{TH}}^\nu$ coincides 
with that of $f_{\op{TH}}^\nu$.
So $f_{\op{TH}}^\nu$ is a 
space-like CMC-1
immersion on an open dense subset in $\mc X_{\op{TH}}^\nu$.
So we can prove the following:

\begin{Proposition}
The map
$f_{\op{TH}}^\nu$ is a 
real analytic space-like CMC-1 $\op{DC}$-immersion
and its image is analytically complete.
\end{Proposition}

\begin{proof}
Although $\tilde f_{\op{TH}}^\nu$ is not a proper map,
$\tilde f_{\op{TH}}^\nu$ is a $C^0$-arc-proper map:
In fact, let $\hat\gamma:[0,1]\to S^3_1$ be a continuous
map, and suppose that
$
\gamma(t)=(\xi(t),\eta(t),s(t))
$
($t\in [0,1)$) satisfies 
$\hat\gamma(t)=\tilde f_{\op{TH}}^\nu\circ \gamma(t)$ for each
$t\in [0,1)$.
Since the second and the third components of
$\hat\gamma(t)$ are $\xi(t),\eta(t)$,
the limits $\dy\lim_{t\to 1-0}\xi(t)$ and $\dy\lim_{t\to 1-0}\eta(t)$
exist. Since
$$
\xi(t)^2+\eta(t)^2=\frac{(1+\nu^2)^2\cos^2 s(t)}{4\nu^2}
\qquad (t\in [0,1)),
$$
$\dy\lim_{t\to 1-0}\cos s(t)$ exists.
By Lemma~\ref{lem:C} in the second appendix, $\dy\lim_{t\to 1-0}s(t)$ also exists.
Thus, we  can show the existence of
the limit $\dy\lim_{t\to 1-0} \gamma(t)$, proving the
$C^0$-arc-properness of $\tilde f_{\op{TH}}^\nu$.
So the assertion follows from Theorem~\ref{thm:first}, 
because $\tilde f_{\op{TH}}^\nu$ is a $\op{DC}$-immersion.
\end{proof}

\begin{Remark}\label{rmk:infty}
If $\nu=1$, we have that
$$
x_0=\sin s \sinh s,\quad x_1=\cos \theta \cos s,
\quad x_2=\sin \theta \cos s,
\quad x_3= \sin s \cosh s.
$$
Then $f_{\op{TH}}^\nu(k\pi,\theta)=
(0,(-1)^k\cos \theta, (-1)^k\sin \theta,0)$ 
($\nu=1$) for each $k\in \Z$. 
In particular, $f_{\op{TH}}^\nu$ ($\nu=1$)
takes the same value infinitely many times.
\end{Remark}

From now on, we consider G-catenoids of type $S$ and $P$,
all of which have non-trivial analytic extensions:

\subsection{G-catenoids of type S}\par

\subsubsection{G-catenoids of type SE}
For each $\mu\in \R_+\setminus \{1\}$,
we consider a map
\begin{equation*}
    f_{\op{SE}}^\mu \colon \R^2
\ni (s,\theta) \mapsto (x_0(s,\theta),x_1(\theta),
x_2(\theta),x_3(s,\theta)) 
\in S^3_1
\end{equation*}
given by
\begin{align*}
f_{\op{SE}}^\mu (s,\theta)  
=
\begin{pmatrix}
 \cosh s & 0 & 0 & \sinh s \\ 
 0 & 1 & 0 & 0 \\ 
 0 & 0 & 1 & 0 \\ 
 \sinh s & 0 & 0 & \cosh s
\end{pmatrix}
\Gamma^\mu_{\op{SE}}(\theta), 
 \quad 
\Gamma^\mu_{\op{SE}}(\theta) :=
\begin{pmatrix}
 -\frac{\mu ^2-1}{2 \mu} \cos \mu \theta \\ 
x_1(\theta) \\
x_2(\theta) \\
0
\end{pmatrix},
\end{align*}
where
\begin{equation}\label{eq:f-SE}
 \begin{aligned}
x_1(\theta)&=-\frac{(\mu^2+1) \cos \theta  
\cos \mu\theta}{2 \mu}-\sin\theta \sin \mu\theta, \\
x_2(\theta)&=-\frac{(\mu^2+1) \sin \theta  
\cos \mu \theta}{2\mu}+\cos \theta  \sin \mu \theta.
\end{aligned}
\end{equation}
Then this gives
a G-catenoid of type {\rm SE}.
Here,  $\Gamma^{\mu}_{\op{SE}}(\theta)$ ($\theta \in \R$) 
is the profile curve of
$f^{\mu}_{\op{SE}}$
lying on $S^2_1 = S^3_1 \cap \{ x_3=0 \}$.
The axis of $f_{\op{SE}}^\mu$ is $\{(0,\cos t,\sin t,0)\;;\;t\in T^1\}$.
The singular point set of $f_{\op{SE}}^\mu$ is 
       \[
	  \Sigma:=\{(s,\theta)\,;\, \cos \mu \theta=0\}
                 = \bigcup_{k\in\Z} \Sigma_k,\qquad
           \Sigma_k:=\left\{
            \left(s,\frac{1}{\mu}
           \left(\frac{\pi}{2}+k\pi\right)\right)\,;\, s\in\R\right\},
       \]
       and the image of each $\Sigma_k$ consists of  one point
       \begin{equation}\label{eq:SE-sing}
	  \pt{P}_k:=f_{\op{SE}}^\mu(\Sigma_k) =
              \left(0,\gamma_{\op{SE}}^\mu \left(\frac{1}{\mu}
               \left(\frac{\pi}{2}+k\pi\right)\right),
             0\right),
       \end{equation}
where 
$ \gamma^{\mu}_{\op{SE}} (\theta) := (x_1(\theta), x_2(\theta)) \in \R^2$. 
The limit point set of  $f_{\op{SE}}^\mu$ is 
       \[
	  \mc L:= \bigcup_{k\in\Z} \mc L_k,\quad
          \mc L_k:=\left\{\left(
                 u,\gamma_{\op{SE}}^{\mu}\left(\frac{\pi}{2\mu}
              \left(2k+1\right)\right),\pm u\right)\in S^3_1\,;\,
                 u\in\R\right\}.
       \]
We prove that the image of this map $f^{\mu}_{\op{SE}}$ 
has an analytic extension:
For any $\mu \in \R_+ \setminus \{ 1\}$, 
define a subset  $\mc X_{\op{SE}}^\mu\subset\R^3$ 
by 
\begin{equation}\label{eq:X-SE} 
\mc X_{\op{SE}}^\mu:=\left\{(\xi,\eta,\theta)\in \R^3\,;\,\xi\eta=
\left(\frac{(\mu^2-1)\cos \mu\theta}{2\mu}\right)^2
\right\}.
\end{equation}
 By  \eqref{eq:f-SE}, we have
\begin{equation}\label{eq:cosSE}
   |\gamma^\mu_{\op{SE}}(\theta)|^2
= 1+\left(\frac{\mu^2-1}{2\mu}\right)^2\cos^2 \mu \theta.
\end{equation}
Thus, if we set 
$$\xi:=x-y,\quad
\eta:=x+y,\quad
z:=\left(\frac{\mu^2-1}{2\mu}\right) \cos \mu \theta,
$$
then $-x^2+y^2+z^2=0$ holds.
Using this expression, it can be easily checked that 
$\mc X_{\op{SE}}^\mu$ 
has the structure of a $\op{DC}$-submanifold
with a countably infinite number of $\op{DC}$-points, whose
inclusion map is a $\op{DC}$-immersion.
Using the functions \eqref{eq:f-SE}, we set 
$$
	  \tilde f_{\op{SE}}^\mu\colon{}\mc X_{\op{SE}}^\mu 
\ni (\xi,\eta,\theta)
          \mapsto \left(\frac{\xi+\eta}{2},x_1(\theta),x_2(\theta),
                        \frac{\xi-\eta}{2}\right)\in S^3_1.
$$
It holds that
$$
f^\mu_{\op{SE}}(s,\theta)=\tilde f_{\op{SE}}^\mu
\left(-\frac{(\mu ^2-1) e^s \cos \mu  \theta}{2 \mu},
-\frac{(\mu ^2-1) e^{-s} \cos \mu  \theta}{2 \mu },\theta\right).
$$
If we set
$$
\check f^\mu_{\op{SE}}(s,\theta)=\tilde f_{\op{SE}}^\mu
\left(\frac{(\mu ^2-1) e^s \cos \mu  \theta}{2 \mu },
\frac{(\mu ^2-1) e^{-s} \cos \mu  \theta}{2 \mu },\theta\right),
$$
then we have
$$
\check f_{\op{SE}}^\mu=
\pmt{
-1 & 0 & 0 & 0 \\
0 & 1 & 0 & 0 \\
0 & 0 & 1 & 0 \\
0 & 0 & 0 & -1 
}f^\mu_{\op{SE}},
$$
which implies the image of $\check f_{\op{SE}}^\mu$ is congruent to
that of $f_{\op{SE}}^\mu$ in $S^3_1$.
It can be easily checked that
\begin{equation}\label{eq:SE-E}
\tilde f_{\op{SE}}^\mu(\mc X_{\op{SE}}^\mu)=
f_{\op{SE}}^\mu(\R^2)\cup \mc  L\cup \check f_{\op{SE}}^\mu(\R^2).
\end{equation}
We prove the following: 

\begin{Lemma}\label{lem:5-3}
The plane curve $\theta\mapsto (x_1(\theta),x_2(\theta))$ 
is a regular curve on $\R$.
\end{Lemma}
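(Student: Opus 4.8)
The plan is to establish regularity in the same spirit as Lemmas \ref{lem:5-1} and \ref{lem:5-2}, namely by exhibiting the velocity vector of the curve as the product of a non-degenerate scalar factor with a rotation matrix applied to a vector that never vanishes. First I would differentiate the explicit formulas \eqref{eq:f-SE} for $x_1(\theta)$ and $x_2(\theta)$, and collect the resulting terms according to the four products $\cos\theta\cos\mu\theta$, $\cos\theta\sin\mu\theta$, $\sin\theta\cos\mu\theta$, $\sin\theta\sin\mu\theta$. A short computation should yield the identity
\[
\pmt{x_1'(\theta) \\ x_2'(\theta)}
= \frac{\mu^2-1}{2}
\pmt{\cos\theta & -\sin\theta \\ \sin\theta & \cos\theta}
\pmt{\sin\mu\theta \\ \frac{1}{\mu}\cos\mu\theta},
\]
so that the velocity vector is, up to the scalar $(\mu^2-1)/2$, the image under a rotation of the vector $(\sin\mu\theta,\,\mu^{-1}\cos\mu\theta)$.

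Since the rotation matrix is orthogonal, it preserves Euclidean norms, and I would then read off
\[
x_1'(\theta)^2 + x_2'(\theta)^2
= \left(\frac{\mu^2-1}{2}\right)^2
\left(\sin^2\mu\theta + \frac{\cos^2\mu\theta}{\mu^2}\right).
\]
Because $\mu\in\R_+\setminus\{1\}$, the factor $(\mu^2-1)/2$ is nonzero, and the parenthesized quantity is bounded below by $\min(1,1/\mu^2)>0$, since $\sin\mu\theta$ and $\cos\mu\theta$ cannot vanish simultaneously. Hence $(x_1'(\theta),x_2'(\theta))\ne(0,0)$ for every $\theta\in\R$, which is exactly the assertion that the curve is regular.

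I do not anticipate any genuine obstacle here: the statement reduces to a routine differentiation followed by the observation above. The only point requiring care is that the common factor $\mu^2-1$ be nonzero, which is guaranteed by the standing hypothesis $\mu\ne 1$ in this subsection; indeed, for $\mu=1$ one computes $(x_1,x_2)\equiv(-1,0)$, a constant curve, so regularity genuinely fails and the hypothesis cannot be dropped. Recognizing the rotation-matrix factorization is the natural way to avoid grinding through the norm computation by hand and keeps the argument parallel to the two preceding lemmas.
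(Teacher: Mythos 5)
Your proposal is correct and follows essentially the same route as the paper: the paper's proof exhibits the identity
$\bigl(x_1'(\theta),x_2'(\theta)\bigr)^T=\tfrac{\mu^2-1}{2\mu}\,R(\theta)\,(\mu\sin\mu\theta,\cos\mu\theta)^T$
with $R(\theta)$ the rotation matrix, which is exactly your factorization up to how the scalar $\mu$ is distributed, and concludes non-vanishing for the same reason. Your added remarks on the norm computation and on the necessity of $\mu\neq 1$ are consistent with the paper's standing hypothesis $\mu\in\R_+\setminus\{1\}$.
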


\begin{proof}
In fact,  the assertion follows from the following identity
$$
\pmt{x_1'(\theta) \\ x'_2(\theta)}=
\frac{\mu^2-1}{2\mu}
\pmt{ \cos \theta & -\sin \theta \\
  \sin \theta  & \cos \theta
}
\pmt{
\mu \sin\mu \theta \\ \cos\mu \theta
}.
$$
\end{proof}

We obtain the following:

\begin{Proposition}\label{prop:SE-noext}
The map $\tilde f_{\op{SE}}^\mu$ 
is a 
real analytic space-like CMC-1 $\op{DC}$-immersion
whose image is analytically complete.
\end{Proposition}

\begin{proof}
By Lemma \ref{lem:5-3},
the curve $\gamma_{\op{SE}}^\mu$
is regular, hence 
the map
$$
\Psi:\R^3\ni (\xi,\eta,\theta)\mapsto 
\Big (\frac{\xi+\eta}2, x_1(\theta),x_2(\theta),\frac{\xi-\eta}2
\Big)
\in S^3_1
$$
is an immersion.
Since the inclusion map $\iota:\mc X_{\op{SE}}^\mu\hookrightarrow \R^3$
is a $\op{DC}$-immersion, the composition
$\tilde f_{\op{SE}}=\Psi\circ \iota$
is also a $\op{DC}$-immersion.
Then \eqref{eq:SE-E} implies that
$\tilde f_{\op{SE}}^\mu$ is an analytic extension of $f^\mu_{\op{SE}}$.
Moreover, by \eqref{eq:SE-E},
$\tilde f_{\op{SE}}^\mu$ is a real analytic
 space-like CMC-1 $\op{DC}$-immersion.

To obtain the analytic completeness of $\tilde f_{\op{SE}}^\mu$,  
it is sufficient to show that
the map $\tilde f_{\op{SE}}^\mu$ is $C^0$-arc-proper, 
since $\tilde f_{\op{SE}}^\mu$ is real analytic.
In fact, let $\Gamma:[0,1]\to S^3_1$ be a continuous
map, and suppose that
$$
\sigma(t):=(\xi(t),\eta(t),\theta(t))\qquad (t\in [0,1))
$$
satisfies $\Gamma(t)=f\circ \sigma(t)$ for each
$t\in [0,1)$.
Since the second and third components of
$\Gamma(t)$ are $\xi(t)\pm \eta(t)$,
the limits
$\dy \lim_{t\to 1-0}\xi(t)$ and
$\dy \lim_{t\to 1-0}\eta(t)$w
exist. Since
$$
\xi(t)\eta(t)+1=|\gamma_{\op{SE}}^\mu(\theta(t))|^2
=1+\left(\frac{\mu^2-1}{2\mu}\right)^2\cos^2 \mu \theta(t)
\qquad (t\in [0,1)),
$$
the limit $\dy\lim_{t\to 1-0}\cos \mu \theta(t)$ exists,
and so  
$\dy\lim_{t\to 1-0}\theta (t)$ also exists (applying Lemma B.1 in the 
second appendix).
Thus $\dy\lim_{t\to 1-0} \sigma(t)$ exists, which implies
the weak $C^0$-arc-properness of $f$.
So the conclusion follows from
Theorem \ref{thm:first}.
\end{proof}

The map $\tilde f_{\op{SE}}^\mu$ is not a proper map, since its image is not closed.
However, if $\mu\in \R_+\setminus\{1\}$ is 
an integer, 
$\tilde f_{\op{SE}}^\mu$
induces a map 
$
\hat f_{\op{SE}}^\mu:\hat {\mc X}_{\op{SE}}^{\mu}\to S^3_1,
$
where
$$
\hat {\mc X}_{\op{SE}}^\mu:=\{(\xi,\eta,\theta)\in \R^2\times T^1
\,;\,-\xi\eta+|\gamma_{\op{SE}}^\mu(\theta)|^2=1\}
$$
is a real analytic $2$-dimensional $\op{DC}$-manifold 
with finitely many $\op{DC}$-points. 
It can be easily checked that
$\hat f_{\op{SE}}^\mu$ is a proper map, and its image
is congruent to the image of the exceptional W-catenoid 
$f^{\mbox{\tiny II}}_{\mu}$ given
in \cite{FKKRUY3}.

\subsubsection{G-catenoids of type SH}
For each
 $\nu\in \R_+$, we consider
a map
\begin{equation*}
    f_{\op{SH}}^\nu :
\R^2\ni (s,\theta) \mapsto 
(x_0(s,\theta),x_1(\theta),x_2(\theta),x_3(s,\theta))
\in S^3_1, 
\end{equation*}
defined by
\begin{align*}
f^{\nu}_{\mathrm{SH}} (s, \theta)
=
\begin{pmatrix}
 \cosh s & 0 & 0 & \sinh s \\ 
 0 & 1 & 0 & 0 \\ 
 0 & 0 & 1 & 0 \\ 
 \sinh s & 0 & 0 & \cosh s
\end{pmatrix}
\Gamma^{\nu}_{\op{SH}}(\theta), \quad 
\Gamma^{\nu}_{\op{SH}}(\theta) :=
\begin{pmatrix}
 \frac{\nu ^2+1}{2 \nu} \sinh \nu \theta \\ 
x_1(\theta) \\
x_2(\theta) \\
0
\end{pmatrix}, 
\end{align*}
where $\nu\in \R_+$ and 
\begin{equation}\label{eq:x1x2-SH}
  \begin{aligned}
x_1(\theta)&=\frac{(\nu^2-1) \cos \theta  \sinh \nu\theta}{2\nu}+\sin \theta \cosh \nu\theta,\\
x_2(\theta)&=\frac{(\nu^2-1) \sin \theta  \sinh \nu\theta}{2 \nu}-\cos \theta  \cosh \nu\theta.
 \end{aligned}
\end{equation}
Here  $\Gamma^{\nu}_{\op{SH}}(\theta)$ ($\theta \in \R$) is the profile curve
of
$f^{\nu}_{\op{SH}}$
lying on $S^2_1 = S^3_1 \cap \{ x_3=0 \}$.
Then $f^{\nu}_{\mathrm{SH}}$ gives 
a G-catenoid of type {\rm SH}. 
The axis of $f_{\op{SH}}^\nu$ is the same as the axis of $f_{\op{SE}}^\mu$.

The singular point set of $f_{\op{SH}}^\nu$ is 
$\{\theta=0\}$, and its image consists of one point 
$(0,0,-1,0)$.
The limit point set of  $f_{\op{SH}}^\nu$ is 
$
	  \mc L:= \{(u,0,-1,\pm u)\,;\,u\in\R\}\subset S^3_1.
$
  Let
  \begin{equation}\label{eq:X-SH}
	  \mc X_{\op{SH}}^\nu:=\left\{(\xi,\eta,\theta)\in \R^3
\,;\,\xi\eta=\left(\frac{(\nu^2+1) 
\sinh \nu\theta}{2\nu}\right)^2\right\}\subset\R^3.
  \end{equation}
Like as for $\mc X_{\op{SE}}^\mu$,
the set $\mc X_{\op{SH}}^\nu$
has the structure of a $\op{DC}$-submanifold of $\R^3$,
whose $\op{DC}$-point is $(0,0,0)$.
Using the functions in \eqref{eq:x1x2-SH}, we set
$$
	  \tilde f_{\op{SH}}^\nu\colon{}\mc X_{\op{SH}}^\nu\ni 
(\xi,\eta,\theta)
          \mapsto \left(\frac{\xi+\eta}{2},x_1(\theta),x_2(\theta),
                        \frac{\xi-\eta}{2}\right)\in S^3_1.
$$
Then we have 
$$
f_{\op{SH}}^\nu(s,\theta)=
\tilde f_{\op{SH}}^\nu\left(\frac{(\nu^2+1)e^s}{\nu}
\sinh(\nu \theta),\frac{(\nu^2+1)e^{-s}}{\nu}
\sinh(\nu \theta),\theta\right).
$$
By setting
$$
\check f_{\op{SH}}^\nu(s,\theta):
=\tilde f_{\op{SH}}^\nu\left(
-\frac{(\nu^2+1)e^s}{\nu}\sinh(\nu \theta),-\frac{(\nu^2+1)e^{-s}}{\nu}
\sinh(\nu \theta),\theta\right),
$$
we have
$$
\check f_{\op{SH}}^\nu(s,\theta)=
\pmt{
-1 & 0 & 0 & 0 \\
0 & 1 & 0 & 0 \\
0 & 0 & 1 & 0 \\
0 & 0 & 0 & -1 
}f^\nu_{\op{SH}} (s, \theta),
$$
which implies the image of $\check f_{\op{SH}}^\nu$ is 
congruent to that of $f_{\op{SH}}^\nu$. 
It holds that 
\begin{equation}\label{eq:SH-I}
\tilde f_{\op{SH}}^\nu(\mc X_{\op{SH}}^\nu)=
f^\nu_{\op{SH}}(\R^2)\cup \mc  L\cup \check f_{\op{SH}}^\nu(\R^2).
\end{equation}
We prove the following: 

\begin{Lemma}\label{lem:5-4}
The plane curve $\theta\mapsto (x_1(\theta),x_2(\theta))$ is a 
regular curve on $\R$.
\end{Lemma}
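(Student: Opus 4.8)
The plan is to follow the same strategy as in the proofs of Lemmas \ref{lem:5-1}, \ref{lem:5-2} and \ref{lem:5-3}: compute the velocity vector $(x_1'(\theta),x_2'(\theta))$ explicitly, factor it as a nonzero scalar times an invertible matrix times a vector, and then argue that the remaining vector never vanishes. This reduces the regularity of the plane curve to an elementary non-vanishing statement.

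First I would differentiate the two components in \eqref{eq:x1x2-SH}. Applying the product rule to $\cos\theta\sinh\nu\theta$, $\sin\theta\cosh\nu\theta$, and the analogous terms in $x_2$, and then collecting the coefficients of $\cos\theta\cosh\nu\theta$ and $\sin\theta\sinh\nu\theta$ in $x_1'$ (and of $\cos\theta\sinh\nu\theta$ and $\sin\theta\cosh\nu\theta$ in $x_2'$), the common factor $(\nu^2+1)/(2\nu)$ emerges from both, exactly as in the earlier lemmas. The expected outcome is the identity
$$
\pmt{x_1'(\theta) \\ x_2'(\theta)}
=\frac{\nu^2+1}{2\nu}
\pmt{\cos\theta & -\sin\theta \\ \sin\theta & \cos\theta}
\pmt{\nu\cosh\nu\theta \\ -\sinh\nu\theta}.
$$
The matching of coefficients against this factored form is the only step requiring care; once the rotation matrix is guessed, verifying both components is a direct check.

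Having established this identity, the conclusion is immediate. Since $\nu\in\R_+$, the scalar $(\nu^2+1)/(2\nu)$ is strictly positive, and the rotation matrix has determinant $1$, hence is invertible. Therefore $(x_1'(\theta),x_2'(\theta))=(0,0)$ would force both $\nu\cosh\nu\theta=0$ and $\sinh\nu\theta=0$. But $\cosh\nu\theta\ge 1>0$ and $\nu>0$, so $\nu\cosh\nu\theta>0$ for every $\theta\in\R$, a contradiction. Thus the velocity never vanishes and the curve is regular.

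I do not anticipate any genuine obstacle here: unlike the trigonometric cases (Lemmas \ref{lem:5-1}, \ref{lem:5-2}, \ref{lem:5-3}), where non-vanishing ultimately came from $\sin^2+\cos^2=1$, here it follows even more cheaply from the strict positivity of $\cosh$. The only place to be attentive is the bookkeeping in the derivative computation and the identification of the correct rotation matrix for the factorization; once that is in place, the argument is entirely self-contained.
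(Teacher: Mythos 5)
Your proposal is correct and follows exactly the paper's own argument: the paper's proof of Lemma \ref{lem:5-4} consists of precisely the factorization identity you derive (written there with a harmless typo $x_0',x_3'$ in place of $x_1',x_2'$), and your concluding observation that $\nu\cosh\nu\theta>0$ only makes explicit what the paper leaves implicit. No gaps.
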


\begin{proof}
In fact, the assertion follows from the following identity
$$
\pmt{x_1'(\theta) \\ x'_2(\theta)}=
\frac{\nu^2+1}{2\nu}
\pmt{ \cos \theta & -\sin \theta \\
  \sin \theta  & \cos \theta
}
\pmt{
\nu \cosh \nu \theta \\  -\sinh \nu \theta
}.
$$
\end{proof}
By this lemma, we can conclude that
$$
\Psi:\R^3\ni (\xi,\eta,\theta)
          \mapsto \left(\frac{\xi+\eta}{2},x_1(\theta),x_2(\theta), 
          \frac{\xi-\eta}{2}\right)
\in S^3_1
$$
is an immersion.
Since the inclusion map $\iota:\mc X_{\op{SH}}^\nu\hookrightarrow \R^3$
is a $\op{DC}$-immersion, the composition
$\tilde f_{\op{SH}}=\Psi\circ \iota$
is also a $\op{DC}$-immersion. 
The relation \eqref{eq:SH-I} implies that
$\tilde f_{\op{SH}}^\nu$ is an analytic extension of $f_{\op{SH}}^\nu$.
Moreover, by Proposition \ref{prop:AB-1},
$\tilde f_{\op{SH}}^\nu$
is a 
real analytic space-like CMC-1 $\op{DC}$-immersion.
We set $\gamma_{\op{SH}}^\nu(\theta)=(x_1(\theta), x_2(\theta))$. 
Since
 \[
   |\gamma_{\op{SH}}^\nu(\theta)|^2= 1+\left(\frac{\nu^2+1}{2\nu}
\right)^2\sinh^2 \nu \theta,
 \]
the function $|\gamma_{\op{SH}}^\nu(\theta)|$ diverges as $\theta\to \pm \infty$.
Since the curve $\gamma_{\op{SH}}^\nu$ is a 
proper map, $\tilde f_{\op{SH}}^\nu$ is also a proper map.
So we can apply Theorem \ref{thm:first} and obtain
the following:

\begin{Proposition}\label{prop:SH-noext}
The map
$\tilde f_{\op{SH}}^\nu$
is a 
real analytic space-like CMC-1 $\op{DC}$-immersion
whose
image is analytically complete.
\end{Proposition}

\subsubsection{G-catenoids of type SP}
 A G-catenoid of type {\rm SP} 
can be expressed as
\begin{equation*}
   f_{\op{SP}} \colon \R^2 \ni (s,t) \mapsto 
(x_0(s,t),x_1(t),x_2(t),x_3(s,t))  \in S^3_1,
\end{equation*}
where
\begin{align*}
f_{\mathrm{SP}}(s,t)
=
\begin{pmatrix}
 \cosh s & 0 & 0 & \sinh s \\ 
 0 & 1 & 0 & 0 \\ 
 0 & 0 & 1 & 0 \\ 
 \sinh s & 0 & 0 & \cosh s
\end{pmatrix}
\Gamma_{\op{SP}}(t), \quad 
\Gamma_{\op{SP}}(t) := 
\begin{pmatrix}
 -\frac{t}{2} \\ \frac{1}{2} t \cos t -\sin t \\ 
\frac{1}{2} t \sin t+\cos t \\ 0
\end{pmatrix}. 
\end{align*}
Here  $\Gamma_{\op{SP}}(t)$ ($t \in \R$) is the profile curve
of 
$f_{\op{SP}}$
lying on $S^2_1 = S^3_1 \cap \{ x_3=0 \}$.
The axis of $f_{\op{SP}}$ is the same as the axis of $f_{\op{SE}}^\mu$.
The singular point set of $f_{\op{SP}}$ is 
       $\{t=0\}$, and its image consists of one point 
       $(0,0,1,0)$. On the other hand,
the limit point set of  $f_{\op{SP}}$ in $S^3_1$ is 
$\mc L:= \{(u,0,1,\pm u)\,;\,u\in\R\}$.
This map has an analytic extension as follows:
Like as
$\mc X_{\op{SE}}^\mu$ and
$\mc X_{\op{SH}}^\nu$,
the set 
  \begin{equation}\label{eq:X-SP}
	  \mc X_{\op{SP}}:=
\{(\xi,\eta,t)\,;\,\xi\eta=t^2/4\}\subset\R^3
  \end{equation}
is a $\op{DC}$-submanifold in $\R^3$
admitting only one cone point at $(0,0,0)$.
We can define a real analytic $\op{DC}$-map on the $\op{DC}$-manifold 
$\mc X_{\op{SP}}$ by
$$
	  \tilde f_{\op{SP}}\colon{}\mc X_{\op{SP}}\ni
 (\xi,\eta,t)
          \mapsto \frac12\Big(\xi+\eta,t \cos t-2 \sin t,
t \sin t+2 \cos t,
                        \xi-\eta\Big)\in S^3_1.
$$
Then, we have
$
f_{\op{SP}}(s,t)=\tilde f_{\op{SP}}
\left(-{e^st}/2,-{e^{-s}t}/2,t\right).
$
If we set
$$
\check f_{\op{SP}}(s,t):=
\tilde f_{\op{SP}}
\left(\frac{e^st}2,\frac{e^{-s}t}2,t\right),
$$
then
\begin{equation}\label{eq:SP-S}
\check f_{\op{SP}}=
\pmt{
-1 & 0 & 0 & 0 \\
0 & 1 & 0 & 0 \\
0 & 0 & 1 & 0 \\
0 & 0 & 0 & -1 
}
f_{\op{SP}}
\end{equation}
and
\begin{equation}\label{eq:SP-I}
\tilde f_{\op{SP}}(\mc X_{\op{SP}})=f_{\op{SP}}(\R^2)\cup \mc L\cup
\check f_{\op{SP}}(\R^2).
\end{equation}
The relation \eqref{eq:SP-S}
implies that $\check f_{\op{SP}}$ is congruent to $f_{\op{SP}}$. 
It can be easily checked that $\R\ni t \mapsto  (x_1(t), x_2(t))\in \R^2$
is an immersion, and
$$
\Psi:\R^3 \ni 
 (\xi,\eta,t)
          \mapsto \frac12\Big(\xi+\eta,t \cos t-2 \sin t,
t \sin t+2 \cos t,
                        \xi-\eta\Big)\in S^3_1 
$$
also gives an immersion. 
Since the inclusion map $\iota:\mc X_{\op{SP}}\hookrightarrow \R^3$
is a $\op{DC}$-immersion, the composition
$\tilde f_{\op{SP}}=\Psi\circ \iota$
is also a $\op{DC}$-immersion.
On the other hand,
since 
$\R\ni t \mapsto  (x_1(t), x_2(t))\in \R^2$ 
is a proper map,
so is $\tilde f_{\op{SP}}$.
In particular, we obtain the following:

\begin{Proposition}\label{prop:SP-noext}
The map $\tilde f_{\op{SP}}$ 
is a
real analytic space-like CMC-1 $\op{DC}$-immersion
whose
image is analytically complete.
\end{Proposition}

\subsection{G-catenoids of type L}

\subsubsection{G-catenoids of type LH}
A G-catenoid of type {\rm LH}  
can be expressed as
\begin{equation*}
   f_{\op{LH}} \colon{}\R^2
\ni (u,v) \mapsto P(u)\Gamma_{\op{LH}}(v) \in S^3_1,  
\end{equation*}
where 
\begin{equation*}
 P(u)= 
 \begin{pmatrix}
 1+\frac{u^2}{2} & u & 0 & -\frac{u^2}{2} \\ 
 u & 1 & 0 & -u \\ 
 0 & 0 & 1 & 0 \\ 
 \frac{u^2}{2} & u & 0 & 1 -\frac{u^2}{2}
 \end{pmatrix}, 
\quad
\Gamma_{\op{LH}}(v) := 
\begin{pmatrix}
 v \cos 2v+\frac{1}{2} v^2 \sin 2v \\ 
0 \\ 
\cos 2v + v \sin 2v \\
 v \cos 2v+\frac{1}{2} (v^2-2) \sin 2v
\end{pmatrix}, 
\end{equation*}
that is, $f_{\op{LH}}=(x_0,x_1,x_2,x_3)$ is given by 
\begin{equation}\label{eq:f-LE}
 \begin{aligned}
x_0&= v \cos 2 v + \frac{u^2+v^2}{2} \sin 2v, &
x_1&= u \sin 2  v, \\ 
x_2&= y_{\op{LH}}(v):=\cos 2v + v \sin 2v, &
x_3&= v \cos 2 v + \frac{u^2+v^2-2}{2} \sin 2v. 
\end{aligned}
\end{equation}
This $f_{\op{LH}}$ is generated by 
the profile curve $\Gamma_{\op{LH}}(v)$ ($v \in \R$)
lying on $S^2_1 = S^3_1 \cap \{ x_1=0 \}$. 
The singular point set is $\{2v\equiv 0\pmod \pi\}$,
       and the image of each connected component of the singular
       set consists of one point.
The limit point set of  $f_{\op{LH}}\colon{}\R^2\to S^3_1$ 
       consists of two light-like lines
$	 \mc L_{\pm}:=\{(t,0,\pm 1,t)\in S^3_1\,;\, t\in\R\}
$, which are the axes of $f_{\op{LH}}$.
The figure of the image of $f_{\op{LH}}$ is complicated 
(cf. Figure \ref{fig:typeL}, left).

\begin{figure}[hbt]%
 \begin{center}
       \includegraphics[width=3.5cm]{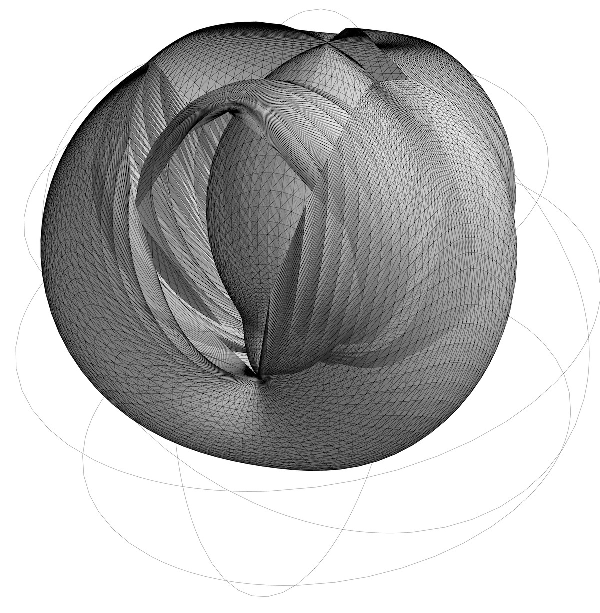} \hskip 2cm
       \includegraphics[width=3.5cm]{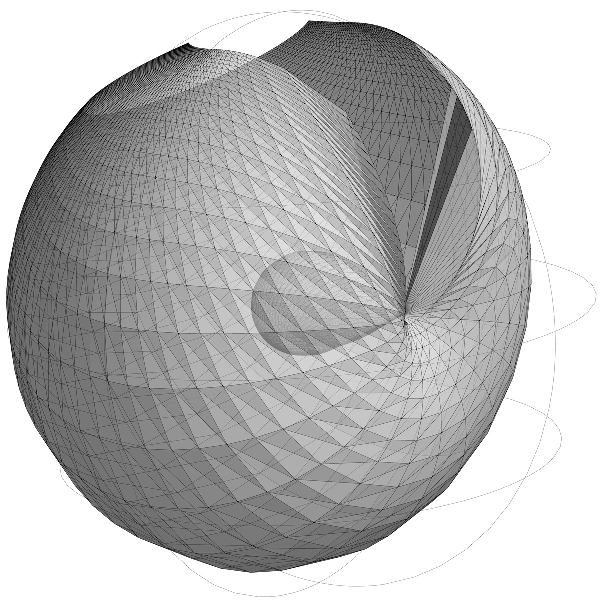} 
 
\end{center}
  \caption{G-catenoids of type LH (left) and LE (right)}%
\label{fig:typeL}
\end{figure}

  Let
  \begin{equation}\label{eq:X-LH}
	  \mc X_{\op{LH}}:=\left\{(x,\xi,v)\,;\,-\xi\sin 2v +x^2 
+(\cos^2 2v -v \sin^2 2v)^2-1=0\right\}\subset\R^3,
  \end{equation}
and set
$$
	  \tilde f_{\op{LH}}\colon{}\mc X_{\op{LH}}\ni
 (x,\xi,v)
          \mapsto \left(\frac{\xi+\sin 2v}{2},x,y^{}_{\op{LH}}(v),
                        \frac{\xi-\sin 2v}{2}\right)\in S^3_1.
$$
Then it holds that
$$
f_{\op{LH}}(u,v)=\tilde f_{\op{LH}}
\left(\frac{(u^2+v^2+1)\sin 2v}2,u \sin 2v,v\right)
$$
and
\begin{equation}\label{eq:LH-I}
\tilde f_{\op{LH}}(\mc X_{\op{LH}})
= f_{\op{LH}}(\R^2)\cup \mc L_+\cup \mc L_-.
\end{equation}

\begin{Lemma}\label{lem:LH}
The subset $\mc X_{\op{LH}}$ 
has the structure of
a $\op{DC}$-submanifold of $\R^3$
such that 
the residual set is the set of $\op{DC}$-points
and the inclusion map $\mc X_{\op{LH}} \hookrightarrow \R^3$
is a $\op{DC}$-immersion.
\end{Lemma}

\begin{proof}
Here $(x,\xi,v)=(0,0,\pi k)\in \R^3$ ($k\in \Z$)
are the 
singular points of $\mc X_{\op{LH}}$.
We set $\phi:=\sqrt{(1+v)(2-(1+v)\sin^2(2v))}$,
which can be locally considered as a function of
$\sin 2v$, so we denote it by 
$\phi(w)$ ($w:=\sin 2v$).
Then, we can write
$$
-\xi\sin 2v +(\cos^2 2v -v \sin^2 2v)^2-1
=\left(\frac{\xi}{2\phi(w)}\right)^2-
\left(\frac{\xi}{2\phi(w)}+w \phi(w)\right)^2.
$$
So if we set
$$
Y:=\frac{\xi}{2\phi(w)},\qquad Z:=\frac{\xi}{2\phi(w)}+w \phi(w),
$$
then we have
$$
-\xi\sin 2v +\left(\cos^2 2v -v \sin^2 2v\right)^2-1=
Y^2-Z^2.
$$
Here,
$$
\Phi(\xi,w,x):=(x,Y(\xi,w),Z(\xi,w))
$$
gives a local $C^\omega$-diffeomorphism around $(0,0,0)$.
By identifying $v+k\pi$ with $w$
by a local $C^\omega$-diffeomorphism $v \mapsto \sin v$,
the local inverse map
$\Phi^{-1}$ can be considered as a 
$C^\omega$-diffeomorphism
from a neighborhood of the origin of $\R^3$ to
a neighborhood of $(x,\xi,v)=(0,0,\pi k)$,
which maps the cone $x^2+Y^2-Z^2=0$ near $(0,0,0)$
to an open subset of $\mc X_{\op{LH}}$. 
Thus, by Proposition \ref{prop:Xstr},
we obtain the conclusion.
\end{proof}

By
\eqref{eq:LH-I},
$\tilde f_{\op{LH}}$ is an analytic extension of
$f_{\op{LH}}$, and is a 
real analytic space-like CMC-1 $\op{DC}$-immersion,
giving an analytic extension of the image of $f_{\op{LH}}$.
It can be easily checked that
$$
\R^3\ni 
 (x,\xi,v)
          \mapsto \left(\frac{\xi+\sin 2v}{2},x,y^{}_{\op{LH}}(v),
                        \frac{\xi-\sin 2v}{2}\right)\in S^3_1
$$
is an immersion. Since  $\tilde f_{\op{LH}}$ is 
the restriction of this map to $\mc X_{\op{LH}}$,
it is a $\op{DC}$-immersion.
The map  $\tilde f_{\op{LH}}$ is not proper. 
In fact, the sequence
$\{(0,0,k\pi)\}_{k=1}^\infty$ lies in $\mc X_{\op{LH}}$
and diverges in $\R^3$, although its image under $\tilde f_{\op{LH}}$ is
the point set $\{(0,0,1,0)\}$.
We prepare the following:

\begin{Lemma}
The set  $\mc X_{\op{LH}}(\subset \R^3)$
has the structure of a real analytic $2$-dimensional $\op{DC}$-manifold
with a countably infinite number of  $\op{DC}$-points.
Moreover, 
$\tilde f_{\op{LH}}$ is a $\op{DC}$-immersion,
giving the analytic extension of $f_{\op{LH}}$. 
\end{Lemma}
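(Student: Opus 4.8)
The plan is to exhibit, near each singular point, an explicit real analytic change of coordinates carrying $\mc X_{\op{LH}}$ onto the standard double-cone $\mc C^2_2$, in the spirit of the treatment of $\mc H$ in Example~\ref{ex:H0} and of the type-S surfaces above. Writing $\tilde f_{\op{LH}}(\xi,x,v)=(x_0,x_1,x_2,x_3)$, a direct computation gives $-x_0^2+x_1^2+x_2^2+x_3^2=-\xi\sin 2v+x^2+y_{\op{LH}}(v)^2$, so the defining relation of $\mc X_{\op{LH}}$ is exactly $G=0$ with
\[
G(\xi,x,v):=-\xi\sin 2v+x^2+y_{\op{LH}}(v)^2-1.
\]
The crucial algebraic step is the identity
\[
y_{\op{LH}}(v)^2-1=\sin 2v\,\bigl(2v\cos 2v+(v^2-1)\sin 2v\bigr),
\]
which follows at once from $\cos^2 2v-1=-\sin^2 2v$. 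Substituting it into $G=0$ recasts the defining relation in the factored double-cone form
\[
x^2=\alpha\beta,\qquad \alpha:=\sin 2v,\quad \beta:=\xi-2v\cos 2v-(v^2-1)\sin 2v.
\]

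Next I would locate the DC-points and build the cone charts. The relation $x^2=\alpha\beta$ degenerates precisely where $x=\alpha=\beta=0$; since $\alpha=\sin 2v=0$ forces $v=k\pi/2$ and then $\beta=0$ forces $\xi=(-1)^k k\pi$, the candidate DC-points form the countably infinite discrete set $p_k:=\bigl((-1)^k k\pi,0,k\pi/2\bigr)$ $(k\in\Z)$. To promote the factored relation to a genuine DC-chart I would consider $\Phi_{\op{LH}}\colon(\xi,x,v)\mapsto(\alpha,x,\beta)$ and compute its Jacobian determinant, which simplifies to $-2\cos 2v$; at each $p_k$ this equals $-2(-1)^k\neq 0$, so $\Phi_{\op{LH}}$ is a local real analytic diffeomorphism of $\R^3$ near $p_k$. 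Post-composing with the linear isomorphism $(\alpha,x,\beta)\mapsto\bigl(x,\frac{\beta-\alpha}{2},\frac{\alpha+\beta}{2}\bigr)$ carries $\{x^2=\alpha\beta\}$ onto $\mc C^2_2$ and $p_k$ onto the standard double-cone point, giving a local DC-coordinate system around each $p_k$ in the sense of Definition~\ref{def:CR}.

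To complete the DC-manifold structure I would verify that all other points are ordinary. Using $y_{\op{LH}}'(v)=-\sin 2v+2v\cos 2v$ one computes $\nabla G=\bigl(-\sin 2v,\,2x,\,-2\xi\cos 2v+2y_{\op{LH}}(v)y_{\op{LH}}'(v)\bigr)$ and checks that $\nabla G$ vanishes exactly at the $p_k$; hence $\mc X_{\op{LH}}\setminus\{p_k\}_{k\in\Z}$ is a regular real analytic $2$-manifold by the implicit function theorem, and, $\Phi_{\op{LH}}$ being an analytic diffeomorphism, the cone charts are analytically compatible with these ordinary charts on their overlaps. This establishes that $\mc X_{\op{LH}}$ is a $2$-dimensional real analytic DC-manifold whose DC-points are exactly $\{p_k\}_{k\in\Z}$. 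The two remaining assertions are then short: the inclusion $\iota\colon\mc X_{\op{LH}}\hookrightarrow\R^3$ is a real analytic DC-immersion, so composing it with the real analytic immersion $\R^3\to S^3_1$ that restricts to $\tilde f_{\op{LH}}$ on $\mc X_{\op{LH}}$ (its immersivity was noted just before the lemma) shows $\tilde f_{\op{LH}}$ to be a real analytic DC-immersion, exactly as in the type-S cases; and the identities $x_0-x_3=\sin 2v$, $x_0+x_3=2v\cos 2v+(u^2+v^2-1)\sin 2v$, $x_1=u\sin 2v$, $x_2=y_{\op{LH}}(v)$ for the components \eqref{eq:f-LE} of $f_{\op{LH}}$ give $f_{\op{LH}}=\tilde f_{\op{LH}}\circ\phi$ for the DC-analytic map
\[
\phi(u,v):=\bigl(2v\cos 2v+(u^2+v^2-1)\sin 2v,\;u\sin 2v,\;v\bigr),
\]
so that \eqref{eq:LH-I}, which presents $f_{\op{LH}}(\R^2)$ as a proper subset of $\tilde f_{\op{LH}}(\mc X_{\op{LH}})$ (the light-like lines $\mc L_\pm$ being limit points, not image points, of $f_{\op{LH}}$), exhibits $\tilde f_{\op{LH}}$ as an analytic extension of $f_{\op{LH}}$ in the sense of Definition~\ref{def:r-A6}.

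The main obstacle is the very first step: spotting the identity and the pair $(\alpha,\beta)$ that simultaneously factor the defining equation as $x^2=\alpha\beta$ and keep $(\xi,x,v)\mapsto(\alpha,x,\beta)$ a local diffeomorphism at the singular points. Once this normal form is found, the rest reduces to a routine Jacobian computation and the implicit function theorem; the one point still needing care is checking that the critical locus of $G$ coincides exactly with the cone locus $\{x=\alpha=\beta=0\}$, so that the $p_k$ are truly the only non-manifold points and no spurious singular points are overlooked.
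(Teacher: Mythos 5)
Your proof is correct and follows essentially the same route as the paper's: both locate the critical points of the defining function $F=-\xi\sin 2v+x^2+y_{\op{LH}}(v)^2-1$ and reduce $F$ near each of them to the double-cone normal form $x^2=\alpha\beta$ by an analytic change of coordinates whose Jacobian is controlled by $\cos 2v\ne 0$ there. The only (cosmetic) difference is that you produce the second factor $\beta=\xi-2v\cos 2v-(v^2-1)\sin 2v$ by one explicit global identity, whereas the paper constructs the analogous coordinate $\tilde\xi$ locally near each singular point via the substitution $\tilde v=\sin 2v$; your version is if anything cleaner, and your formula for $\phi(u,v)$ realizing $f_{\op{LH}}=\tilde f_{\op{LH}}\circ\phi$ is the correct one.
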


\begin{proof}
We set $F:=-\xi\sin 2v +x^2 +y_{\op{LH}}(v)^2-1$.
It can be easily checked that
$F=F_{\xi}=F_x=F_v=0$ if and only if
$$
v=v_0,\quad \xi=2\epsilon_0 v_0, \quad  x=0 
\qquad \left(\epsilon_0:=\cos(2v_0)\in \{\pm 1\},\,\,v_0\in \frac{\pi}2\Z\right).
$$
So we fix such a  $v_0$.
We set $\tilde v:=\sin 2v$, and
then we can write
$$
v-v_0=\tilde v\phi(\tilde v), \qquad 
\cos 2v=\epsilon_0 +\tilde v\psi(\tilde v),
$$
where $\phi(\tilde v)$ and $\psi (\tilde v)$ are  
functions defined on a neighborhood of $\tilde v=0$.
So if we set
$$
\tilde \xi:=\xi-\xi_0+\tilde v (\phi(\tilde v)^2
+2\epsilon_0 \phi(\tilde v) + 1),
$$
then we can write  $F=-\tilde \xi \tilde v +x^2$.
Using this expression, one can conclude that 
the zero set of $F$ has a 
cone-like singular point
at $(\xi_0,0,v_0)$
of $\tilde f_{\op{LH}}$.
Since the map
$$
S^3_1\ni (x_0,x_1,x_2,x_3)  \mapsto \Big(x_0+x_3, x_1,
\frac{\sin^{-1}(x_0-x_3)}2\Big ) \in \R^3
$$
gives a local inverse map of the immersion
$$
\R^3\ni  (\xi,x,v)
          \mapsto \left(\frac{\xi+\sin 2v}{2},x,y^{}_{\op{LH}}(v),
                        \frac{\xi-\sin 2v}{2}\right)\in S^3_1,
$$
$\tilde f_{\op{LH}}$ is a $\op{DC}$-immersion.
By \eqref{eq:LH-I}, it also gives an analytic extension of
$f_{\op{LH}}$. 
\end{proof}

\begin{Proposition}\label{prop:LH-noext}
The map  
$\tilde f_{\op{LH}}$ is a real analytic
 space-like CMC-1 $\op{DC}$-immersion
 whose
image is analytically complete.
\end{Proposition}

\begin{proof}
Applying Proposition \ref{prop:AB-1},
we can conclude that
$\tilde f_{\op{LH}}$ is a 
real analytic space-like CMC-1 $\op{DC}$-immersion.
The map $\tilde f_{\op{LH}}$ is not a proper map,
since $\tilde f_{\op{LH}}(\xi,0,2m\pi)=(\xi/2,0,1,\xi/2)$
for $m\in \Z$. 
By 
Theorem \ref{thm:first}, 
it is sufficient to show that 
$\tilde f_{\op{LH}}$ is $C^0$-arc-proper.
We consider a continuous curve
$
\Gamma:[0,1]\to S^3_1
$
such that $\Gamma([0,1))\subset \tilde f_{\op{LH}}(\mc X_{\op{LH}})$.
 Let  $\sigma(t)=(\xi(t),x(t),v(t))$ ($0\le t<1$)
be a continuous
curve on $\mc X_{\op{LH}}$
such that 
$$
\tilde f_{\op{LH}}\circ \sigma(t)=\Gamma(t) \qquad
(0\le t<1).
$$
Since $\Gamma(1)$ exists, the three limits
$$
\lim_{t\to 1-0}\xi(t), \quad \lim_{t\to 1-0} x(t),\quad
\lim_{t\to 1-0} \sin 2v(t)
$$
exist. 
Then
$\dy\lim_{t\to 1-0} v(t)$ also exists (cf. Lemma B.1 in the 
second appendix),
proving the assertion.
\end{proof}

\subsubsection{G-catenoids of type LE}
On the other hand,
a G-catenoid of type {\rm LE}  
can be expressed as
\begin{equation*}
   f_{\op{LE}} \colon{}\R^2
\ni (u,v) \mapsto P(u)\Gamma_{\op{LE}}(v) \in S^3_1,  
\end{equation*}
where 
\begin{equation*}
 P(u)= 
 \begin{pmatrix}
 1+\frac{u^2}{2} & u & 0 & -\frac{u^2}{2} \\ 
 u & 1 & 0 & -u \\ 
 0 & 0 & 1 & 0 \\ 
 \frac{u^2}{2} & u & 0 & 1 -\frac{u^2}{2}
 \end{pmatrix}, 
\quad
\Gamma_{\op{LE}}(v) := 
\begin{pmatrix}
 v \cosh 2v-\frac{1}{2} (v^2+2) \sinh 2v \\ 
0 \\ 
\cosh 2v - v \sinh 2v \\
 v \cosh 2v-\frac{1}{2} v^2 \sinh 2v
\end{pmatrix}, 
\end{equation*}
that is, $f_{\op{LE}}=(x_0,x_1,x_2,x_3)$ is given by 
\begin{equation}\label{eq:f-LH}
 \begin{aligned}
x_0&= v \cosh 2v - \frac{u^2+v^2+2}{2} \sinh 2v, &
x_1&= -u \sinh 2v, \\ 
x_2&= y_{\op{LE}}(v):=\cosh 2v - v \sinh 2v, &
x_3&= v \cosh 2 v - \frac{u^2+v^2}{2} \sinh 2v. 
\end{aligned}
\end{equation}
Here  $\Gamma_{\op{LE}}(v)$ ($v \in \R$) is a profile curve
of
$f_{\op{LE}}$
lying on $S^2_1 := S^3_1 \cap \{ x_1=0 \}$.
The axes of $f_{\op{LE}}$ are the same as the axes of $f_{\op{LH}}$.
The singular point set is  $\{v=0\}$, whose 
image consists of one point $(0,0,1,0)$.
The limit point set of $f_{\op{LE}}\colon{}\R^2\to S^3_1$ consists
of a light-like line
       $
	 \mc L:=\{(t,0,1,t)\in S^3_1\,;\, t\in\R\}.
       $
Set
\begin{equation}\label{eq:X-LE}
\mc X_{\op{LE}}:=\Big\{(\xi,x,v)\,;\,\xi
\sinh 2 v+x^2 +y_{\op{LE}}^{}(v)^2=1\Big\}\subset\R^3,
 \end{equation}
and let
$$
	  \tilde f_{\op{LE}}\colon{}\mc X_{\op{LE}}\ni 
(\xi,x,v)
          \mapsto \left(\frac{\xi-\sinh 2v}{2},x,y_{\op{LE}}^{}(v),
                        \frac{\xi+\sinh 2v}{2}\right)\in S^3_1.
$$
Since the expression of $\mc X_{\op{LE}}$ is 
obtained by replacing $\sin 2v$ and $\cos 2v$ by
$\sinh 2v$ and $\cosh 2v$,
like as in Lemma \ref{lem:LH},
one can show that
the set  $\mc X_{\op{LE}}(\subset \R^3)$
has the structure of a real analytic $2$-dimensional $\op{DC}$-manifold
with a $\op{DC}$-point, and
the residual set is the set of $\op{DC}$-points
and the inclusion map $\mc X_{\op{LE}} \hookrightarrow \R^3$
is a $\op{DC}$-immersion.
Moreover, 
it can be easily checked that $\tilde f_{\op{LE}}$
is a $\op{DC}$-immersion defined on the
$\op{DC}$-manifold $\mc X_{\op{LE}}$,
and is a proper map.
Moreover, 
we have
$$
f_{\op{LE}}(u,v)=\tilde f_{\op{LE}}\left(v \cosh 2v - \frac{u^2+v^2+1}{2} 
\sinh 2v,-u\sinh v,v\right)
$$
and  $\tilde f_{\op{LE}}(\mc X_{\op{LE}})= f_{\op{LE}}(\R^2)\cup \mc L$.
In particular, $\tilde f_{\op{LE}}$ 
is an analytic extension of $f_{\op{LE}}$.
Since
$$
\R^3\ni(\xi,x,v)
          \mapsto \left(\frac{\xi-\sinh 2v}{2},x,y_{\op{LE}}^{}(v),
                        \frac{\xi+\sinh 2v}{2}\right)\in S^3_1
$$
is an immersion,
$\tilde f_{\op{LE}}$ is a real analytic $\op{DC}$-immersion. 
Moreover, by Proposition \ref{prop:AB-1}, it is 
a 
real analytic space-like CMC-1 $\op{DC}$-immersion
giving 
an analytic extension of $f_{\op{LE}}$. 
The image of $\tilde f_{\op{LE}}$ can be expressed as 
the intersection of $S^3_1$ and the graph of
\[
  x_2 = \sqrt{1+(x_0-x_3)^2} -\frac12 (x_0-x_3)\sinh^{-1}(x_0-x_3).
\]
By Proposition \ref{prop:first},
we obtain the following:

\begin{Proposition}\label{prop:LH-noext2}
The map $\tilde f_{\op{LE}}$ 
is a 
real analytic space-like CMC-1 $\op{DC}$-immersion
whose
image is analytically complete.
\end{Proposition}

Finally, summarizing the results in this section,
we obtained the following:

\begin{Theorem}
The images of G-catenoids of type $T$ are analytically complete
and those of
other types admit analytic extensions, which are 
analytically complete.
Moreover, after taking the analytic completions, 
all of them  are the images of
real analytic space-like CMC-1 $\op{DC}$-immersions into $S^3_1$.
\end{Theorem}

\appendix
\section{Maximal catenoids in $\R^3_1$}

This appendix gives an overview of G-catenoids
and W-catenoids which are space-like maximal surfaces in $\R^3_1$.

We give the following definition:

\begin{Definition}\label{def:ZMC}
Let $f:X^2\to \R^3_1$ be a real analytic
$\op{DC}$-immersion 
defined on a connected $2$-dimensional real analytic
$\op{DC}$-manifold $X^2$.
Then $f$ is called 
a {\it ZMC $\op{DC}$-immersion}
if 
there exists an open dense subset $O$ of
$X^2\setminus \Sigma$
such that the restriction $f|_O$
is a zero mean curvature immersion
on $O$,
where $\Sigma$ is the set of $\op{DC}$-points in $X^2$.
\end{Definition}

We first recall the property of real analytic ZMC $\op{DC}$-immersions 
from \cite{AUY} as follows:
Let $U$ be a domain in the $uv$-plane $\R^2$, and let
$f\colon U\to \R^3_1$ be a real analytic map
into the Lorentz-Minkowski 3-space $\R^3_1$
of signature $(-++)$.
We set
$$
B_f:=\det(P),\qquad
P:=
\pmt{f_{u}\cdot f_{u}\, &\, f_{u}\cdot f_{v} \\
f_{v}\cdot f_{u}\, &\, f_{v}\cdot f_{v} 
},
$$
where $\cdot$ denotes the canonical Lorentzian inner
product of
$\R^3_1$.
We set
$$
Q:=
\pmt{f_{uu}\cdot \tilde \nu\, &\, f_{uv}\cdot \tilde \nu \\
f_{vu}\cdot \tilde \nu\, &\, f_{vv}\cdot \tilde \nu 
},
$$
where $\tilde \nu:=f_u\times_L f_v$
and $\times_L$ is the canonical Lorentzian vector product
of $\R^3_1$. 
In this situation, $f$ is called a zero mean curvature map
if 
$
A_f:=\op{trace}(\tilde PQ)
$
vanishes identically, 
where $\tilde P$ is the cofactor matrix of $P$.
If $f$ is a {\it ZMC $\op{DC}$-immersion}, then
it is a zero mean curvature map.
So, imitating the proof of Proposition \ref{prop:AB-1},
we can prove the following
(although real analytic space-like CMC-1 $\op{DC}$-immersions  never change
type from space-like to  time-like, real analytic zero mean 
curvature immersions may change type):

\begin{Proposition}\label{prop:AB-0}
Let $f:X^2\to \R^3_1$ be a 
$\op{DC}$-immersion
defined on a connected $2$-dimensional real analytic
$\op{DC}$-manifold $X^2$.
Suppose that there exists a non-empty local inverse
$\op{DC}$-coordinate system $(U,\phi)$ such that 
$f\circ \phi:U\to \R^3_1$ gives a zero mean curvature immersion
on an open dense subset of $U$.
Then $f$ is a ZMC  $\op{DC}$-immersion.
\end{Proposition}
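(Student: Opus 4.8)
The plan is to imitate the proof of Proposition~\ref{prop:AB-1} almost verbatim, replacing the pair of analytic functions $(A_W,B_W)$ attached to the CMC-$1$ setting by the functions $(A_f,B_f)$ defined in the appendix for maps into $\R^3_1$. First I would let $\Sigma$ denote the (isolated) set of DC-points of $X^2$, so that $X^2\setminus\Sigma$ is a connected real analytic $2$-manifold. Given an arbitrary non-DC-point $q$, I would join it to $p$ by a continuous curve $\gamma:[0,1]\to X^2\setminus\Sigma$, cover the compact image $\gamma([0,1])$ by finitely many DC-coordinate neighborhoods $U_0=U,U_1,\dots,U_k$ with each $U_{i-1}\cap U_i\neq\emptyset$, and thereby reduce the statement to a single-step propagation lemma analogous to Lemma~\ref{lemma:AB-1}: if $U,V$ are connected coordinate neighborhoods with $U\cap V\neq\emptyset$ and $f\circ\phi$ is zero mean curvature on the regular subset of $U$, then the same holds on $V$.

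The propagation lemma is where the computation sits, and it is in fact simpler than its CMC-$1$ counterpart. On $U$ the hypothesis says that the analytic function $A_f$ vanishes on the (open, nonempty) regular subset where $f$ is a nondegenerate immersion; by real analyticity of $A_f$ on the connected chart $U$ this forces $A_f\equiv 0$ on all of $U$. To pass to $V$, I would use that the vanishing of $A_f$ is coordinate-independent: on the non-light-like locus the ratio $A_f/(2|B_f|^{3/2})$ equals the mean curvature, a geometric quantity, so under an analytic change of coordinates $A_f$ gets multiplied by a nowhere-vanishing factor (up to sign, a power of the Jacobian determinant). Because the regular subset is nonempty we have $B_f\not\equiv 0$, so the light-like locus $\{B_f=0\}$ is nowhere dense; hence $U\cap V$ contains non-light-like points, $A_f$ expressed in the $V$-coordinates vanishes on a nonempty open subset of $V$, and analyticity again yields the identity on all of $V$. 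Running this along the chain $U_0,\dots,U_k$ shows $A_f\equiv 0$ near every non-DC-point, i.e.\ $f|_{X^2\setminus\Sigma}$ is a zero mean curvature immersion, which is exactly the definition of a real analytic ZMC DC-immersion.

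The one place that demands care, and the reason the parenthetical warning is inserted, is the absence of the sign control available for space-like CMC-$1$ surfaces. In Lemma~\ref{lemma:AB-1} one additionally invoked that $B_V$ never changes sign (via \cite{HKKUY}) to guarantee that the propagated surface stays space-like; here a zero mean curvature immersion in $\R^3_1$ is allowed to change causal type, so $B_f$ genuinely changes sign and no such constraint is available or needed. I would therefore deliberately \emph{drop} the sign step and propagate only the single analytic identity $A_f\equiv 0$. The main thing to verify honestly is the claimed coordinate behavior of $A_f$, namely that the vanishing of $A_f$ is preserved by the transition maps even across the light-like locus; I expect this to be the only real obstacle, and I would settle it by establishing the transformation rule on the open dense non-light-like set and extending it by continuity. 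Everything else is a direct transcription of the chain argument in the proof of Proposition~\ref{prop:AB-1}.
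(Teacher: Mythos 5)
Your proposal matches the paper's intended argument: the paper proves Proposition~\ref{prop:AB-0} precisely by ``imitating the proof of Proposition~\ref{prop:AB-1},'' i.e.\ the same chain of overlapping charts reducing to a one-step propagation lemma analogous to Lemma~\ref{lemma:AB-1}, with the identity $A_U^2=4B_U^3$ replaced by $A_f\equiv 0$ and the sign argument for $B$ (which enforced space-likeness in the CMC-$1$ case) deliberately dropped because ZMC surfaces may change causal type --- exactly the modification you describe. Your verification that the vanishing of $A_f$ is chart-independent (it rescales by a nowhere-vanishing power of the Jacobian determinant) is the correct and only substantive point to check, so the proof is sound and essentially identical to the paper's.
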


Osamu Kobayashi \cite{K} showed that
classified G-catenoids
are
\begin{itemize}
 \item the elliptic G-catenoid (cf. \eqref{eq:e-catenoid}),
  \item the parabolic G-catenoid (cf. \eqref{eq:p-catenoid}) and
\item the hyperbolic G-catenoid (cf. \eqref{eq:h-catenoid}).
\end{itemize}
The image of the elliptic G-catenoid $f_E$ 
(cf. \eqref{eq:e-catenoid})
can be considered as
the image of a ZMC  $\op{DC}$-immersion
(cf. Example \ref{ex:E0}).
The image $\mc E$ of $f_E$ is
analytically complete (cf. Theorem \ref{Cor:EPHK}).
We next consider  
the parabolic G-catenoid $f_P$ in \eqref{eq:p-catenoid}.
We let  $\mc P$ be the subset of $\R^3_1$
defined by \eqref{eq:p2}.
Then $\mc P$ gives an analytic extension of $f_P$,
which is analytically complete
(cf. Theorem \ref{Cor:EPHK}).
On the other hand,
the analytic extension 
$\mc H$ 
of the image of $f_H$
is given by \eqref{eq:H0},
which is a singly periodic maximal surface with
cone-like singular points at
$\Sigma:=\{(0, n\pi, 0)\,;\, n\in \Z\}$
with respect to the inclusion map:
As shown in Theorem \ref{Cor:EPHK},
$\mc H$ is analytically complete.

It is known that
there are only two
 congruence classes of W-catenoids up to a homothety
(cf.~Imaizumi-Kato \cite{IK}).
One is represented by the elliptic G-catenoid $f_E$, which is 
analytically complete, as mentioned above.
The other is represented by the maximal surface
$f_K$
given in
\eqref{eq:A-cat-max},
which is the Kobayashi surface of order $2$ of type 
$(0,0, \pi,\pi)$ in \cite{FKKRUY4}.
This surface has an analytic extension 
as the graph of
$t=x \tanh y$,
which is analytically complete
(cf. Theorem \ref{Cor:EPHK}).

\section{A property of continuous functions}
\label{app:continuous}

In this appendix, we point out the following 
property of continuous functions, which is used
in Section 4:

\begin{Lemma}\label{lem:C}
Let $f:\R\to\R$ be a non-constant 
real analytic periodic function,
and let $\phi:[0,1)\to \R$ be a continuous function.
If 
$
\dy\lim_{t\to 1-0}f\circ \phi(t)
$ 
exists, then $\dy\lim_{t\to 1-0}\phi(t)$ also exists.
\end{Lemma}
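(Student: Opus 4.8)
The plan is to study the cluster set of $\phi$ at the endpoint $t=1$ and to play it off against two rigidity features of the analytic periodic function $f$. Write $L:=\lim_{t\to 1-0}f\circ\phi(t)$ and let $T>0$ be a period of $f$. Since $f$ is continuous, periodic and non-constant, it attains a maximum $M$ and a minimum $m$ with $m<M$, and the image of $f$ over \emph{any} interval of length at least $T$ is all of $[m,M]$ (such an interval contains a full period). On the other hand, $g:=f-L$ is a real analytic function that is not identically zero (as $f$ is non-constant, and $L\in[m,M]$ is finite), so its zero set $f^{-1}(L)=g^{-1}(0)$ is a \emph{discrete} subset of $\R$. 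The discreteness of the level set and the oscillation of $f$ are the only structural inputs needed.

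The first step is to show that $\phi$ is bounded near $t=1$. Suppose instead that $\limsup_{t\to 1-0}\phi(t)=+\infty$. Then, after passing to a subsequence, I can choose $\tau_k\nearrow 1$ with $\phi(\tau_k)\to+\infty$ and $\phi(\tau_{k+1})-\phi(\tau_k)>T$. By the intermediate value theorem the image $\phi([\tau_k,\tau_{k+1}])$ is an interval covering $[\phi(\tau_k),\phi(\tau_{k+1})]$, whose length exceeds $T$, and hence contains points mapped by $f$ to $M$ and to $m$. This yields sequences tending to $1$ along which $f\circ\phi$ takes the two distinct values $M$ and $m$, contradicting the existence of the limit $L$. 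Applying the same argument to $-\phi$ excludes $\liminf_{t\to 1-0}\phi(t)=-\infty$, so $\phi$ is bounded on some $[s_0,1)$.

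With boundedness in hand I would pass to the cluster set
$$
C:=\bigcap_{s_0\le s<1}\overline{\phi([s,1))}\subset\R .
$$
As a nested intersection of nonempty compact intervals, $C$ is a nonempty compact connected subset of $\R$. For each $c\in C$ there is a sequence $t_k\to 1$ with $\phi(t_k)\to c$, and continuity of $f$ gives $f(c)=\lim_k f(\phi(t_k))=L$; thus $C\subset f^{-1}(L)$. Since $f^{-1}(L)$ is discrete and $C$ is connected, $C$ must be a single point $\{p\}$. A cluster set reduced to one point forces $\lim_{t\to 1-0}\phi(t)=p$, which is exactly the assertion.

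The delicate point is the boundedness step: a priori nothing prevents $\phi$ from escaping to infinity, and it is precisely the interplay between an unbounded excursion of $\phi$ and the periodic oscillation of $f$ that obstructs convergence of $f\circ\phi$. The key is to arrange the increments of $\phi$ to exceed one full period, so that each long excursion forces $f\circ\phi$ to revisit both $M$ and $m$. Once $\phi$ is known to stay bounded, the connectedness-versus-discreteness dichotomy closes the argument immediately, so I expect no further difficulty after that point.
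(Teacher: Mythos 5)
Your proof is correct, and while the boundedness step coincides with the paper's (an unbounded excursion of $\phi$ sweeps a full period of $f$, forcing $f\circ\phi$ to revisit both the maximum and the minimum of $f$ arbitrarily close to $t=1$ --- you make the paper's rather terse version of this explicit via the period $T$ and the intermediate value theorem), the second half takes a genuinely different route. The paper argues by contradiction from two accumulation points $\alpha<\beta$ of $\phi$: it interleaves sequences $t_k<s_k<t_{k+1}$ tending to $1$, uses the intermediate value theorem to show $\phi([t_k,s_k])\supset[\alpha+\delta,\beta-\delta]$, and invokes real analyticity to guarantee that $f$ is non-constant on that subinterval, so that $f\circ\phi$ attains two distinct values $m<M$ infinitely often near $1$. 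You instead form the cluster set $C=\bigcap_s\overline{\phi([s,1))}$, observe it is a nonempty compact connected subset of $\R$ contained in the level set $f^{-1}(L)$, and use real analyticity in the form ``the zero set of the non-zero analytic function $f-L$ is discrete'' to conclude $C$ is a singleton. Both arguments ultimately rest on the identity theorem for real analytic functions, but they exploit it differently: the paper needs $f$ to be non-constant on every subinterval (to produce oscillation), while you need the level set to be discrete (to collapse a connected cluster set). Your version is arguably cleaner and avoids the interleaving of sequences; the paper's is more elementary in that it never mentions cluster sets and derives the contradiction directly from the definition of a limit. Both are complete proofs.
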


\begin{proof}
Since $f$ is non-constant,
there exists a closed interval $[a,b]$ ($a<b$)
which coincides with $f(\R)$.
If $\phi(t)$ is unbounded as $t\to 1-0$, 
then $f\circ \phi(t)$ takes all values in $[a,b]$
infinitely many times, contradicting the existence of the
limit $\dy\lim_{t\to 1-0}f\circ \phi(t)$. 
So $\phi(t)$ is bounded.
Suppose that $\dy\lim_{t\to 1-0}\phi(t)$ does not exist and so 
there exist two distinct accumulation points 
$\alpha,\beta\in \R$ ($\alpha<\beta$)
of $\phi$. 
Then there are two distinct sequences 
$\{t_k\}_{k=1}^\infty$ and
$\{s_k\}_{k=1}^\infty$ in $[0,1)$ 
converging to $1$ such that
$$
\lim_{k\to \infty}\phi(t_k)=\alpha,\quad
\lim_{k\to \infty}\phi(s_k)=\beta.
$$
By taking a subsequence of $\{s_k\}_{k=1}^\infty$ 
and a subsequence of $\{t_k\}_{k=1}^\infty$ if necessary,
we may assume that $t_k<s_k<t_{k+1}$
for each positive integer $k$.
We can find a positive number $\delta(<(\beta-\alpha)/3)$
and a positive integer $N$ such that
$$
|\phi(t_k)-\alpha|,\,\,|\phi(s_k)-\beta|<\delta
\qquad (k\ge N).
$$
Then 
$
[\alpha+\delta,\beta-\delta]\subset \phi([t_k,s_k])
$
holds for each $k(\ge N)$.
We denote by $M$ (resp. $m$) the maximum (resp. minimum)
value of $f$ on the interval $[\alpha+\delta,\beta-\delta]$.
Since $f$ is a non-constant
real analytic function, we have  $m<M$. 
Then, by the intermediate value theorem,
there exist $u_k,v_k\in (t_k,s_k)$
such that $f\circ \phi(u_k) =m$ 
and $f \circ \phi (v_k) = M$ for each $k$.
Since $s_k$ and $t_k$ are converging to $1$,
so are the two values $u_k$ and $v_k$. 
Then 
$$
m=\lim_{k\to \infty}f\circ \phi(u_k)=
\lim_{k\to \infty}f\circ \phi(v_k)=M,
$$
a contradiction. Hence, $\dy\lim_{t\to 1-0}\phi(t)$ exists. 
\end{proof}

\begin{acknowledgements}
The authors thank the referee, Professors 
Toshizumi Fukui and Udo Hertrich-Jeromin for valuable comments and
suggestions.
\end{acknowledgements}

\end{document}